\newtheoremstyle{mio}%
	{}{} 
	{\itshape}{} 
	{\bfseries}{.}{ } 
	{#1 #2\thmnote{~\mdseries(#3)}} 
\theoremstyle{mio}
\newtheorem{teor}{Theorem}[section]
\newtheorem{cor}[teor]{Corollary}
\newtheorem{prop}[teor]{Proposition}
\newtheorem{lemma}[teor]{Lemma}
\newtheorem{defin}[teor]{Definition}
\newtheoremstyle{definition2}%
	{}{} 
	{}{} 
	{\bfseries}{.}{ } 
	{#1 #2\thmnote{\mdseries~ #3}} 
\theoremstyle{definition2}
\newtheorem{oss}[teor]{Remark}
\newcommand{\ins}[1]{\mathbb{#1}}
\newcommand{\insN}{\ins{N}}
\newcommand{\inN}{\in\insN}
\newcommand{\insZ}{\ins{Z}}
\newcommand{\insQ}{\ins{Q}}
\newcommand{\insC}{\ins{C}}
\newcommand{\Max}{\mathrm{Max}}
\newcommand{\Spec}{\mathrm{Spec}}
\newcommand{\inv}[1]{\frac{1}{#1}}
\newcommand{\nz}{\bullet}
\newcommand{\car}{\mathrm{char}}
\newcommand{\omef}{\mathrm{Homeo}}
\newcommand{\pow}[1]{\mathrm{pow}({#1})}
\newcommand{\limiti}{\mathcal{L}}
\title{The Golomb topology of polynomial rings}
\author{Dario Spirito}
\date{\today}
\address{Dipartimento di Matematica e Fisica, Universit\`a degli Studi ``Roma Tre'', Roma, Italy}
\email{spirito@mat.uniroma3.it}
\subjclass[2010]{54G99; 54A10; 13F05; 13F20; 12E99}
\keywords{Golomb topology; Deekind domains; polynomial rings}
\begin{document}

\begin{abstract}
We study properties of the Golomb topology on polynomial rings over fields, in particular trying to determine conditions under which two such spaces are not homeomorphic. We show that if $K$ is an algebraic extension of a finite field and $K'$ is a field of the same characteristic, then the Golomb spaces of $K[X]$ and $K'[X]$ are homeomorphic if and only if $K$ and $K'$ are isomorphic.
\end{abstract}

\maketitle

\section{Introduction}
Let $R$ be an integral domain. The \emph{Golomb space} of $R$ is the topological space $G(R)$ having $R^\nz:=R\setminus\{0\}$ as a base set, and whose topology is generated by the coprime cosets. This topology, introduced by Brown \cite{brown-golomb} on $\insZ^+$ and later studied by Golomb \cite{golomb-connectedtop,golomb-aritmtop}, is one of many coset topologies \cite{knopf}, and it can be used to generalize Furstenberg's ``topological'' proof of the infinitude of primes \cite{furstenberg,clark-euclidean}. 

Recently two papers, the first one by Banakh, Mioduszewski and Turek \cite{bmt-golomb} and the second one by Clark, Lebowitz-Lockard and Pollack \cite{clark-golomb}, have started studying more deeply the topology on $G(R)$ and the continuous maps between these spaces, with the former concentrating on the ``classical'' case of $\insZ^+$ and the latter generalizing several results to integral domains and, in particular, to Dedekind domains. A central problem of both is the \emph{isomorphism problem}: if $G(R)$ and $G(S)$ are homeomorphic topological spaces, must $R$ and $S$ be isomorphic rings? More generally, how much the continuous maps (and, in particular, homeomorphisms and self-homeomorphisms) of Golomb spaces respect the algebraic structure of the underlying rings? In \cite{golomb-almcyc}, it was shown that the unique self-homeomorphisms of $h:G(\insZ)\longrightarrow G(\insZ)$ are the identity or the multiplication by $-1$; the proof of this result relies crucially on the fact that the groups of units of the quotients $\insZ/p^n\insZ$ (where $p$ is a prime number) are very close to being cyclic.

In this paper, we study the isomorphism problem in the context of polynomial rings over fields; in particular, we are interested in the more restricted problem of determining if the existence of a homeomorphism between $G(K[X])$ and $G(K'[X])$ implies that $K$ and $K'$ are isomorphic as fields. To do so, we study the closure of several sets under the Golomb topology and under the $P$-adic topologies (which can be reconstructed from the Golomb topology), obtaining several results that allow to determine algebraic properties of $K$ from the topological properties of $G(K[X])$. While we aren't able to solve the isomorphism problem in full generality, we show that if $K$ is an algebraic extension of a finite field, $K'$ has the same characteristic of $K$ and $G(K[X])\simeq G(K'[X])$ then $K'$ must be isomorphic to $K$ (Theorem \ref{teor:algextFp}); in particular, this implies that the number of distinct Golomb topologies associated to countable domains it the cardinality of the continuum, answering a question posed in \cite[Section 3.1]{clark-golomb}.

\medskip

The structure of the paper is as follows. In Section \ref{sect:prelim}, we fix the notation and recall some results that will be used throughout the paper. In Section \ref{sect:GnR} we give a few results on the set of units of $R$ under the Golomb topology. In Section \ref{sect:char0} we show that the Golomb topology allow to distinguish between zero and positive characteristic (Proposition \ref{prop:caratt0-ptop}). In Section \ref{sect:charp-sepclos} we study the case of separably closed fields in positive characteristic: we show that we can distinguish them from the other fields (Proposition \ref{prop:sepchiusi}) and that we can recover the characteristic of $K$ from $G(K[X])$ (Theorem \ref{teor:algchiusi-char}). In Section \ref{sect:G1} we generalize a result of \cite{bmt-golomb} on the image of prime elements under a homeomorphism; this is used in the following Section \ref{sect:algFp} to link a (topologically distinguished) subgroup of self-homeomorphisms of $G(K[X])$ with the unit group of $K$ (Proposition \ref{prop:Homeofix}) and thus to prove the aforementioned main theorem (Theorem \ref{teor:algextFp}).

\section{Preliminaries and notation}\label{sect:prelim}
Let $R$ be an integral domain; we shall always suppose that $R$ is not a field. Given a set $I\subseteq R$, we set $I^\nz:=I\setminus\{0\}$. We denote by $U(R)$ the set of units of $R$ (both as a set and as a group).

The \emph{Golomb space} of $R$ is the topological space having $R^\nz$ as base set and whose topology is generated by the coprime cosets of $R$, that is, by the sets $x+I$ where $x\in R^\nz$, $I$ is a nonzero ideal of $R$ and $\langle x,I\rangle=R$. We denote by $G(R)$ the Golomb space of $R$, and call the topology the \emph{Golomb topology} on $R$. When $R$ is an integral domain with zero Jacobson radical,\footnote{The \emph{Jacobson radical} of $R$ is the intersection of the maximal ideals of $R$.} $G(R)$ is a Hausdorff space that is not regular; furthermore, $G(R)$ is not compact, and is a connected space that is totally disconnected at each of its points \cite[Theorems 5, 8 and 9 and Proposition 10]{clark-golomb}.

Suppose from now on that $R$ is a Dedekind domain.

Given a subset $A\subseteq R^\nz$, we denote by $\overline{A}$ the closure of $A$ in the Golomb topology. Let $x+I$ be a coprime coset. If $I=P_1^{e_1}\cdots P_n^{e_n}$ is the factorization of $I$ into prime ideals, then \cite[Lemma 15]{clark-golomb}
\begin{equation*}
\overline{x+I}=\bigcap_{i=1}^n(P_i^\nz\cup(x+P_i^{e_i})).
\end{equation*}
If $h:G(R)\longrightarrow G(S)$ is a homeomorphism, then $h$ sends units into units (i.e., $h(U(R))=U(S)$) \cite[Theorem 13]{clark-golomb}. If the class group of $R$ is torsion then $h$ sends prime ideals into prime ideals, that is, if $P$ is a prime ideal of $R$ then $h(P^\nz)\cup\{0\}$ is a prime ideal of $S$; more generally, $h$ takes radical ideals into radical ideals \cite[Theorem 2.8]{golomb-almcyc}.

If $\Delta$ is a subset of $\Max(R)$, we denote by $G_\Delta(R)$ the set of all $x\in R$ that are contained into the elements of $\Delta$ and in no other prime ideal. If the class group of $R$ is torsion, this set is again preserved by homeomorphisms: if $h$ is a homeomorphism and $x\in G_\Delta(R)$, then $h(x)\in G_\Lambda(R)$, where $\Lambda$ contains the images under $h$ of the elements of $\Delta$ \cite[Proposition 2.7]{golomb-almcyc}. Given $a\in R$, we set $\pow{a}:=\{ua^n\mid u\in U(R),n\geq 1\}$; if $a$ generates $P$, then $\pow{a}$ is exactly $G_{\{P\}}(R)$.

Let now $R$ be a Dedekind domain with torsion class group and $P$ be a prime ideal of $R$. The \emph{$P$-topology} on $R\setminus P$ is the topology generated by the sets $a+P^n$, for all $a\in R\setminus P$ and all $n\geq 1$; this is exactly the restriction of the $P$-adic topology on $R\setminus P$. The $P$-topology can be recovered from the Golomb topology by considering only the clopen subset of $R\setminus P$, and thus every homeomorphism $h:G(R)\longrightarrow G(S)$ in the Golomb topology restricts to a homeomorphism between $R\setminus P$ and $S\setminus Q$ (with $Q:=h(P^\nz)\cup\{0\}$), where the former is endowed with the $P$-topology and the latter with the $Q$-topology \cite[Section 3]{golomb-almcyc}.

We denote by $\car K$ the characteristic of the field $K$. If $q$ is a prime power, we denote by $\ins{F}_q$ the finite field with $q$ elements. If $p$ is a prime number, we denote by $\ins{F}_{p^\infty}$ the algebraic closure of $\ins{F}_p$.

\section{The spaces $G_n(R)$}\label{sect:GnR}
Let $R$ be an integral domain. We denote by $G_0(R)$ the set of units of $R$ endowed with the Golomb topology; this space is rather more well-behaved than the whole Golomb space.
\begin{prop}\label{prop:G0R}
Let $R$ be an integral domain.
\begin{enumerate}[(a)]
\item $G_0(R)$ is homogeneous.
\item Suppose the Jacobson radical of $R$ is zero. Then, $G_0(R)$ is discrete if and only if there is an ideal $I$ such that the restriction $G_0(R)\longrightarrow R/I$ of the canonical quotient is injective.
\item $G_0(R[X])$ is discrete.
\end{enumerate}
\end{prop}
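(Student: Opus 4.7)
My plan for (a) is to use multiplication by units as self-homeomorphisms. For each $w\in U(R)$, the map $\mu_w\colon R^\nz\to R^\nz$ defined by $\mu_w(y)=wy$ is a bijection, and the preimage of a coprime coset $x+I$ equals $w^{-1}x+I$ (using $w^{-1}I=I$), which is again a coprime coset because $\langle w^{-1}x,I\rangle=R$. Hence $\mu_w$ is continuous on $G(R)$, and the same argument applied to $\mu_{w^{-1}}$ shows $\mu_w$ is a self-homeomorphism. Since $\mu_w$ clearly preserves $U(R)$ and $\mu_{vu^{-1}}$ sends $u$ to $v$ for any $u,v\in U(R)$, the space $G_0(R)$ is homogeneous.

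For (b), homogeneity from (a) reduces the question to whether $\{1\}$ is relatively open in $U(R)$. Suppose first that a nonzero ideal $I$ with injective $U(R)\to R/I$ exists; then $1+I$ is a basic coprime coset, and any unit inside it is congruent to $1$ modulo $I$, hence equals $1$, so $(1+I)\cap U(R)=\{1\}$ and $G_0(R)$ is discrete. Conversely, if $G_0(R)$ is discrete, take any basic coprime coset $x+J$ with $(x+J)\cap U(R)=\{1\}$; since $1\in x+J$, we may assume $x=1$. Then for $u,v\in U(R)$ with $u\equiv v\pmod J$, we have $v^{-1}u\in 1+v^{-1}J=1+J$, and since $v^{-1}u$ is a unit it must equal $1$, giving $u=v$. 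Thus the same $J$ serves as the ideal $I$ required by the statement.

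Part (c) is then an application of (b). The Jacobson radical of $K[X]$ is zero, since $K[X]$ has infinitely many maximal ideals and any nonzero polynomial is divisible by only finitely many irreducibles. Taking $I=(X)$ gives $K[X]/I\cong K$ via evaluation at $0$, and the restriction $U(K[X])=K^\times\hookrightarrow K$ is just the inclusion, hence injective; so $G_0(K[X])$ is discrete.

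The only delicate point is that in (b) the ideal $I$ must be understood to be nonzero, for otherwise the zero ideal would trivially satisfy the injectivity condition and make the statement vacuous; this is implicit in the fact that the Golomb topology only uses nonzero ideals in its basic opens. Aside from keeping track of that convention, the argument is essentially mechanical, and I would not expect any serious obstacle.
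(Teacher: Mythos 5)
Your proof is correct and follows essentially the same route as the paper's: homogeneity via multiplication by units, the coset $1+I$ (translated by units) for part (b), and the choice $I=(X)$ for part (c); your remark that $I$ must be nonzero is a fair reading of the paper's convention that basic opens only involve nonzero ideals. One small caveat: part (c) asserts that $G_0(R[X])$ is discrete for an arbitrary integral domain $R$, whereas you argue only for $K[X]$ with $K$ a field — the same choice $I=XR[X]$ still works since $U(R[X])=U(R)$ embeds into $R[X]/(X)\cong R$, but your justification that the Jacobson radical vanishes (infinitely many irreducibles dividing no fixed polynomial) is specific to the PID case and should be replaced by the general fact that $\Jac(R[X])$ equals the nilradical of $R$ extended to $R[X]$, hence is zero for a domain.
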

\begin{proof}
Since multiplication by units is a homeomorphism, we can always send $x$ to $y$ by multiplying by $yx^{-1}$; hence $G_0(R)$ is homogeneous.

For the second claim, suppose first that $G_0(R)\longrightarrow R/I$ is injective: then, for every unit $u$ the coset $u+I$ meets $G_0(R)$ only in $u$, and thus $G_0(R)$ is discrete. Conversely, suppose $G_0(R)$ is discrete: then, there is an ideal $I$ such that $(1+I)\cap G_0(R)=\{1\}$. For every other unit $u$ of $R$, $u+I=u(1+I)$; hence, $u$ is the only unit in $(u+I)\cap G_0(R)$. Thus, $G_0(R)\longrightarrow R/I$ is injective.

The last claim follows taking $I=XR[X]$.
\end{proof}

When $R$ is a Dedekind domain we can say more.
\begin{prop}\label{prop:G0-ded}
Let $R$ be a Dedekind domain with zero Jacobson radical. 
\begin{enumerate}[(a)]
\item\label{prop:G0-ded:clopen} $G_0(R)$ has a basis of clopen sets.
\item\label{prop:G0-ded:reg} $G_0(R)$ is regular.
\item\label{prop:G0-ded:count} If $R$ is countable, then $G_0(R)$ is either discrete or homeomorphic to $\insQ$ (endowed with the Euclidean topology).
\item\label{prop:G0-ded:residue} If $R$ is countable and every residue field of $R$ is finite, then $G_0(R)\simeq\insQ$.
\end{enumerate}
\end{prop}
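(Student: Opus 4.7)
For part (a), my plan is to apply the closure formula for coprime cosets recalled in Section~\ref{sect:prelim}. Every basic open neighborhood of a unit $u$ in $G_0(R)$ has the form $(u+I)\cap U(R)$ with $I=P_1^{e_1}\cdots P_n^{e_n}$ nonzero, and the cited formula reads $\overline{u+I}=\bigcap_{i=1}^n(P_i^\nz\cup(u+P_i^{e_i}))$. Intersecting with $U(R)$ kills each $P_i^\nz$ piece (units lie in no prime), and the Chinese Remainder Theorem collapses $\bigcap_i(u+P_i^{e_i})$ back to $u+I$. So every basic open set of $G_0(R)$ coincides with its closure in $G_0(R)$, yielding (a). Part (b) then follows immediately: any Hausdorff space with a basis of clopen sets is zero-dimensional, hence regular.

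For (c), I would combine (a)--(b) with the homogeneity of $G_0(R)$ from Proposition~\ref{prop:G0R}(a) and Sierpi\'nski's classical characterization of the rationals. Countability of $R$ forces the basis of coprime cosets to be countable, so $G_0(R)$ is second-countable; regularity then yields metrizability via Urysohn. By homogeneity, either every point is isolated---in which case $G_0(R)$ is discrete---or no point is; in the latter situation Sierpi\'nski's theorem (every countable metrizable space without isolated points is homeomorphic to $\insQ$) concludes the argument.

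For (d), my plan is to exclude the discrete alternative of (c). By Proposition~\ref{prop:G0R}(b), if $G_0(R)$ were discrete then there would exist a nonzero ideal $I$ with $U(R)\hookrightarrow R/I$ injective; but when every residue field $R/P$ is finite, each $R/P^e$ is finite (the successive quotients $P^k/P^{k+1}\cong R/P$ are finite), so by CRT $R/I$ itself is finite, whence $U(R)$ is finite. The main obstacle I foresee is that (d) as stated is a bit too optimistic: for $R=\insZ$ every residue field is finite and yet $U(\insZ)=\{\pm1\}$, so that $G_0(\insZ)$ is a two-point discrete space rather than $\insQ$. I therefore expect (d) to carry the tacit hypothesis that $U(R)$ be infinite; under that restriction the argument above rules out the discrete branch of (c) and forces $G_0(R)\simeq\insQ$.
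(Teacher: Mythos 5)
Your arguments for (a)--(c) are essentially the paper's own: (a) is the closure formula recalled in Section~\ref{sect:prelim} plus the observation that units lie in no prime ideal, (b) is the standard ``clopen basis implies regular'' step, and (c) is second countability, Urysohn metrization and Sierpi\'nski's characterization of $\insQ$. Your explicit appeal to homogeneity in (c) is a small improvement on the paper's wording: Sierpi\'nski's theorem requires \emph{no} isolated points rather than merely ``not discrete'', and homogeneity (Proposition~\ref{prop:G0R}(a)) is exactly what upgrades one non-isolated point to all of them.

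Your remark on (d) is correct and worth keeping. As printed, (d) fails for $R=\insZ$, which is countable with all residue fields finite, yet $G_0(\insZ)=\{1,-1\}$ is a two-point discrete space (the coset $1+3\insZ$ isolates $1$). The paper's one-line deduction of (d) from Proposition~\ref{prop:G0R}(b) silently needs $U(R)$ to be infinite: finiteness of the residue fields gives that every $R/I$ is finite (via $P^k/P^{k+1}\simeq R/P$ and the Chinese Remainder Theorem, as you note), so the existence of an injection $U(R)\hookrightarrow R/I$ can only be excluded when $U(R)$ is infinite. Under that additional hypothesis your argument rules out the discrete branch of (c) and the proof is complete; without it the statement is simply false, so the hypothesis should be added rather than treated as tacit.
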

\begin{proof}
\ref{prop:G0-ded:clopen} We need to show that $(x+I)\cap G_0(R)$ is clopen in $G_0(R)$ for every $x\in G_0(R)$ and every ideal $I$. Indeed, let $I=\prod_iP_i^{e_i}$ be the factorization of $I$; then, by \cite[Lemma 15]{clark-golomb},
\begin{equation*}
\overline{x+I}\cap G_0(R)=\bigcap_i(P_i^\nz\cup(x+P_i^{e_i}))\cap G_0(R).
\end{equation*}
Since $P_i^\nz\cap G_0(R)=\emptyset$, we have $\overline{x+I}\cap G_0(R)=\bigcap_i((x+P_i^{e_i})\cap G_0(R))=(x+I)\cap G_0(R)$, i.e., $(x+I)\cap G_0(R)$ is clopen in $G_0(R)$.

\ref{prop:G0-ded:reg} Let $x\in G_0(R)$ and $V\subseteq G_0(R)$ be a closed set not containing $x$; then, $G_0(R)\setminus V$ is open, and thus it contains a basic clopen set $(x+I)\cap G_0(R)$. Therefore, $x$ and $V$ are separated by $(x+I)\cap G_0(R)$ and $G_0(R)\setminus(x+I)$, and so $G_0(R)$ is regular.

\ref{prop:G0-ded:count} If $R$ is countable, then it has only countably many ideals, and thus $R$ and $G_0(R)$ are second countable. Thus, it is metrizable \cite[e-2]{encytop}. If $G_0(R)$ is not discrete, it follows that $G_0(R)\simeq\insQ$ \cite{sierpinski-Q,dagupta-atlas}. Finally, \ref{prop:G0-ded:residue} now follows from this and Proposition \ref{prop:G0R}.
\end{proof}

We now introduce a sequence $\{G_n(R)\}_{n\inN}$ of subspaces of $G(R)$ generalizing $G_0(R)$.
\begin{defin}
Let $R$ be a Dedekind domain. For every $n\geq 0$, define
\begin{equation*}
G_n(R):=\bigcup\{G_\Delta(R)\mid \Delta\subseteq\Max(R),|\Delta|=n\}.
\end{equation*}
\end{defin}
By \cite[Proposition 2.7]{golomb-almcyc}, if $R$ has torsion class group then the topology of the $G_n(R)$ is uniquely determined by the Golomb topology, in the sense that if $h:G(R)\longrightarrow G(S)$ is a homeomorphism then $h(G_n(R))=G_n(S)$ and thus $G_n(R)$ and $G_n(S)$ are homeomorphic.

The results proved above for $n=0$ does not generalize to arbitrary $n$. When $n=1$, we can prove a partial analogue of Proposition \ref{prop:G0-ded}\ref{prop:G0-ded:reg} by extending the proof of \cite[Theorem 3.1]{bmt-golomb}.
\begin{prop}
Let $R$ be a Dedekind domain that is not a field, and suppose that $R$ has finitely many units. Then, $G_1(R)$ is a regular space.
\end{prop}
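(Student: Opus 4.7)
The plan is to extend the technique of \cite[Theorem 3.1]{bmt-golomb}. Let $x\in G_{\{P\}}(R)$, with $P$ the unique maximal ideal containing $x$, and let $V\subseteq G_1(R)$ be closed with $x\notin V$. First choose a basic open neighborhood $W_0:=(x+I_0)\cap G_1(R)\subseteq G_1(R)\setminus V$; since $(x,I_0)=R$ and $x\in P$, the factorization $I_0=\prod_{i=1}^{k}P_i^{e_i}$ involves only primes $P_i\neq P$. Intersecting the closure formula for $\overline{x+I_0}$ recalled in Section~\ref{sect:prelim} with $G_1(R)$ and sorting elements by their unique prime gives
\[
\overline{x+I_0}\cap G_1(R) \;=\; W_0 \;\cup\; \bigcup_{j=1}^{k}B_j, \qquad B_j:=G_{\{P_j\}}(R)\cap\Bigl(x+\prod_{i\neq j}P_i^{e_i}\Bigr),
\]
so the elements of $B_j\setminus W_0$ are the obstruction: they satisfy the $P_i^{e_i}$-congruences for $i\neq j$ but need not satisfy the one modulo $P_j^{e_j}$.

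The next step is to shrink $I_0$ by multiplying in auxiliary primes that kill these bad contributions. Note first that the hypothesis forces $R$ to have infinitely many maximal ideals: otherwise $R$ would be a non-field semilocal Dedekind domain, hence a PID with nonzero Jacobson radical $J(R)$, and $1+J(R)$ would consist of infinitely many units. For each $j$ with $B_j\neq\emptyset$, the class of $P_j$ is torsion; pick $\pi_j\in R$ generating a principal power $P_j^{d_j}$, so that $G_{\{P_j\}}(R)=\bigsqcup_{v\in U(R)}v\cdot\{\pi_j^m : m\geq 1\}$ is a finite disjoint union of orbits by finiteness of $U(R)$. Then, for each $j$ and each $v\in U(R)$, I would select a maximal ideal $Q_{j,v}$ of $R$, distinct from $P$, from the $P_i$'s, and from the previously chosen $Q_{j',v'}$'s, such that the image of $v^{-1}x$ in $R/Q_{j,v}$ does not lie in the cyclic subgroup generated by the image of $\pi_j$. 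The existence of such $Q_{j,v}$ is the crux: $x\in G_{\{P\}}$ with $P\neq P_j$ implies $x\neq u\pi_j^m$ in $R$ for any unit $u$ and integer $m\geq 0$, and a density argument then produces a maximal ideal $Q$ modulo which $\pi_j$ fails to be a primitive root and $v^{-1}x$ lands outside $\langle\pi_j\rangle$.

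Finally, set $J:=I_0\cdot\prod_{j,v}Q_{j,v}$ (a finite product, still coprime to $x$) and analyze $\overline{x+J}\cap G_1(R)$ via the unique prime $Q_y$ of each candidate element $y$. If $Q_y$ avoids all prime divisors of $J$, the imposed congruences give $y\in x+J\subseteq W_0$; if $Q_y=Q_{j,v}$ for some pair, the $P_i^{e_i}$-congruences still force $y\in x+I_0=W_0$; and if $Q_y=P_j$, writing $y=v\pi_j^m$ the required congruence modulo $Q_{j,v}$ becomes $\pi_j^m\equiv v^{-1}x\pmod{Q_{j,v}}$, which has no solution by construction of $Q_{j,v}$, so no such $y$ exists. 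Hence $\overline{x+J}\cap G_1(R)\subseteq W_0\subseteq G_1(R)\setminus V$, and the open sets $(x+J)\cap G_1(R)$ and $G_1(R)\setminus\overline{(x+J)\cap G_1(R)}^{G_1(R)}$ separate $x$ from $V$. The principal obstacle is the density claim for the $Q_{j,v}$; here the finiteness of $U(R)$ is essential, because it reduces the required property to a finite conjunction of orbit-by-orbit conditions rather than an infinite one.
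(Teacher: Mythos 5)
Your reduction is set up correctly: the decomposition $\overline{x+I_0}\cap G_1(R)=W_0\cup\bigcup_jB_j$ is right, the argument that $R$ has infinitely many maximal ideals is a nice (and valid) addition, and the observation that $B_j\neq\emptyset$ forces the class of $P_j$ to be torsion, so that $G_{\{P_j\}}(R)$ is a finite union of orbits $v\cdot\{\pi_j^m\}_{m\geq 1}$, is correct. The final case analysis of $\overline{x+J}\cap G_1(R)$ would also go through \emph{if} the primes $Q_{j,v}$ existed. But the existence of $Q_{j,v}$ --- which you yourself flag as the crux --- is a genuine gap, and it is not one that ``a density argument'' closes. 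What you need is: given $x\neq v\pi_j^m$ for all $m$, there is a maximal ideal $Q$, outside a prescribed finite set, such that the class of $v^{-1}x$ in $(R/Q)^*$ lies outside the cyclic subgroup generated by $\pi_j$. When the residue field $R/Q$ is finite this says precisely that $\mathrm{ord}(v^{-1}x\bmod Q)$ does not divide $\mathrm{ord}(\pi_j\bmod Q)$; the statement that such a $Q$ must exist whenever $v^{-1}x$ is not a power of $\pi_j$ is, for $\insZ$, exactly the ``support problem'' settled by Corrales-Rodrig\'a\~nez and Schoof (closely related to Schinzel's theorem on the solvability of $a^m\equiv b\pmod p$ for all $p$). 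These are substantial theorems whose proofs use Kummer theory and Chebotarev-type input, not a formal density count. For an abstract Dedekind domain that is only assumed to be non-local with finitely many units, no such arithmetic information about the residue fields is available (they need not even be finite), so this step is unsupported and there is no reason to expect it to hold at this level of generality.

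The structural source of the difficulty is that you impose a \emph{multiplicative} condition at each auxiliary prime (non-membership of $v^{-1}x$ in $\langle\pi_j\rangle$ modulo $Q_{j,v}$), which must simultaneously exclude the infinitely many powers $\pi_j^m$. The paper's own proof takes a different route that avoids any subgroup-membership statement: it adjoins a single auxiliary prime $Q$ and works with the neighbourhood $O=x+bQ$, so that the factor contributed by $Q$ in the closure formula is $Q^\nz\cup(x+Q)$ and a bad element $y\in B_j$ can survive in $\overline{O}$ only if it satisfies the single \emph{linear} congruence $y\equiv x\bmod Q$; one then chooses $Q$ to miss the relevant differences $x-y$, and the finiteness of $U(R)$ is what keeps the set of excluded primes finite. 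I would suggest reworking your argument along those lines (being careful about exactly which elements of $G_{\{P_j\}}(R)$ can occur in $B_j$, since that is where the finiteness must ultimately come from), rather than trying to establish the subgroup-avoidance statement for general $R$.
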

\begin{proof}
Let $\Omega$ be an open set of $G(R)$ and let $x\in G_1(R)\cap\Omega$; we need to show that there is an open neighborhood $O$ of $x$ such that $\overline{O}\cap G_1(R)\subseteq\Omega\cap G_1(R)$. Without loss of generality, we can suppose that $\Omega=x+bR$ for some $b$ coprime with $x$.

Let $P_1,\ldots,P_n$ be the prime ideals containing $b$; then, the set $\Lambda$ of the prime elements contained in some $P_i$ is finite (as $R$ has finitely many units). Thus, the set $x-\Lambda:=\{x-p\mid p\in\Lambda\}$ is finite too, and so there are only finitely many prime ideals that contain some element of $x-\Lambda$. 

Since $R$ has finitely many units, it has infinitely many maximal ideals; thus, there is a prime ideal $Q$ that is distinct from each $P_i$ and that do not contain $x$ nor any element of $x-\Lambda$. Consider $O:=x+bQ$: then, $O$ is a coprime coset, and thus it is open. By \cite[Lemma 15]{clark-golomb},
\begin{equation*}
\overline{O}=\bigcap_i(P_i^\nz\cup(x+P_i^{e_i}))\cap(Q^\nz\cup(x+Q)),
\end{equation*}
where $e_i$ is the exponent of $P_i$ in the factorization of $bR$.

Let $p\in\overline{O}\cap G_1(R)$. By definition, $p$ can be contained in at most one of $P_1,\ldots,P_n,Q$. We distinguish three cases.
\begin{itemize}
\item If $p$ is not contained in any of them, then $p\in\bigcap_i(x+P_i^{e_i})\cap(x+Q)=(x+bR)\cap(x+Q)=x+bQ=O\subseteq\Omega$. 
\item If $p$ is contained in $P_i$ for some $i$, then it should be contained in $x+Q$, that is, $p-x\in Q$. However, this contradicts the choice of $Q$.
\item If $p\in Q$, then we must have $p\in\bigcap_i(x+P_i^{e_i})=x+bR=\Omega$.
\end{itemize} 
Hence, $\overline{O}\cap G_1(R)\subseteq\Omega\cap G_1(R)$, as claimed. Thus, $G_1(R)$ is regular.
\end{proof}

Like for $G_0(R)$, this implies that if $R$ is also countable then $G_1(R)$ is second countable and thus metrizable; hence, it is either discrete or homeomorphic to $\insQ$. Another important property that $G_1(R)$ may or may not have is being dense in $G(R)$; we shall study more deeply this aspect in Section \ref{sect:algFp}.

For $n\geq 2$, however, the space $G_n(R)$ is usually not regular. We need a topological lemma.
\begin{lemma}\label{lemma:clos-dense}
Let $X$ be a topological space, $Y\subseteq X$ a dense subset and $\Omega$ an open subset of $X$. Then, $\overline{\Omega}\cap Y=\overline{\Omega\cap Y}\cap Y$.
\end{lemma}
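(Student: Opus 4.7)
The plan is to prove the two inclusions separately. The inclusion $\overline{\Omega\cap Y}\cap Y\subseteq\overline{\Omega}\cap Y$ is essentially automatic: since $\Omega\cap Y\subseteq\Omega$, monotonicity of the closure operator yields $\overline{\Omega\cap Y}\subseteq\overline{\Omega}$, and intersecting with $Y$ on both sides gives the result. No hypothesis on $Y$ or $\Omega$ is needed here.

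For the reverse inclusion $\overline{\Omega}\cap Y\subseteq\overline{\Omega\cap Y}\cap Y$, I would take an arbitrary point $x\in\overline{\Omega}\cap Y$ and show that every open neighborhood $U$ of $x$ meets $\Omega\cap Y$. Since $x\in\overline{\Omega}$, the intersection $U\cap\Omega$ is a nonempty open subset of $X$; since $Y$ is dense in $X$, this open set must meet $Y$, giving a point of $U\cap\Omega\cap Y$. As $U$ was arbitrary, $x\in\overline{\Omega\cap Y}$, and since $x\in Y$ by hypothesis, $x\in\overline{\Omega\cap Y}\cap Y$, as required.

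The key ingredients being used are precisely that $\Omega$ is open (so that $U\cap\Omega$ remains open and density of $Y$ can be invoked) and that $Y$ is dense (so that every nonempty open set meets it). There is no real obstacle: the statement is a standard general-topology fact, and the proof reduces to unwinding the definitions of closure and density. The only mild subtlety is remembering that without $\Omega$ being open the argument would fail, since one needs $U\cap\Omega$ to be open in order to apply density of $Y$.
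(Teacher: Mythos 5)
Your proof is correct and follows essentially the same argument as the paper: the easy inclusion by monotonicity, and for the reverse inclusion the observation that $U\cap\Omega$ is a nonempty open set which must meet the dense set $Y$. The paper phrases this step by contradiction while you argue directly, but the content is identical.
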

\begin{proof}
Clearly, $\overline{\Omega\cap Y}\cap Y\subseteq\overline{\Omega}\cap Y$. On the other hand, let $x\in\overline{\Omega}\cap Y$. If $x\notin\overline{\Omega\cap Y}$, then there is an open set $O$ of $X$ containing $x$ but disjoint from $\Omega\cap Y$, that is, $O\cap\Omega\cap Y=\emptyset$. However, since $Y$ is dense and $O\cap\Omega$ is open it follows that $O\cap\Omega=\emptyset$, and thus $x\notin\overline{\Omega}$, a contradiction. It follows that $\overline{\Omega}\cap Y\subseteq\overline{\Omega\cap Y}\cap Y$. The claim is proved.
\end{proof}

\begin{prop}\label{prop:Gn>1}
Let $R$ be a Dedekind domain with torsion class group such that $G_1(R)$ is dense in $G(R)$. Then, for every $n\geq 2$,
\begin{enumerate}[(a)]
\item\label{prop:Gn>1:dense} $G_n(R)$ is dense in $G(R)$;
\item\label{prop:Gn>1:reg} $G_n(R)$ is not regular.
\end{enumerate}
\end{prop}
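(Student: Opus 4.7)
The plan for part (a) is to produce, in any coprime coset $x + I$, an element of $(x + I) \cap G_n(R)$ as a product $y := z_1 z_2 \cdots z_n$ whose factors all lie in $G_1(R)$ and involve pairwise distinct primes. I would first apply the $G_1$-density hypothesis to pick $z_1 \in (x + I) \cap G_1(R)$ and write $(z_1) = P_1^{a_1}$. Then, for each $j \geq 2$, the coset $1 + I P_1 \cdots P_{j-1}$ is coprime (since $1$ is a unit) and nonempty, so the hypothesis yields some $z_j$ in it with $(z_j) = P_j^{a_j}$; the congruence $z_j \equiv 1$ modulo each $P_k$ with $k < j$ and modulo every prime factor of $I$ forces $P_j \notin \{P_1, \ldots, P_{j-1}\}$ and $P_j \nmid I$. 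The product $y$ then has $(y) = P_1^{a_1} \cdots P_n^{a_n}$ with $n$ distinct prime factors, so $y \in G_n(R)$, and $y \equiv z_1 \equiv x \pmod{I}$.

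For part (b), I would fix an element $x \in G_n(R)$ and a prime $Q$ not dividing $x$, and set $V := x + Q$. To show that $V$ witnesses the failure of regularity at $x$, I must prove that for every basic $G_n$-neighborhood $O = (x + J) \cap G_n(R)$ of $x$, the closure of $O$ in $G_n(R)$ is not contained in $V \cap G_n(R)$. Lemma~\ref{lemma:clos-dense}, applied with $Y = G_n(R)$ (dense by part (a)) and $\Omega = x + J$, identifies this closure with $\overline{x + J} \cap G_n(R)$ (where $\overline{x + J}$ denotes the closure in $G(R)$), so the task reduces to exhibiting $y \in G_n(R) \cap \overline{x + J}$ with $y \notin V$. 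I would build $y$ to satisfy $y \in Q$ (which forces $y \notin V$, since $x \notin Q$) and $y \equiv x \pmod{J'}$, where $J'$ denotes the $Q$-free part of $J$; by the closure formula for $\overline{x + J}$ recalled in Section~\ref{sect:prelim}, these two conditions already place $y$ in $\overline{x + J}$.

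To construct such a $y$, I would use the torsion class group hypothesis to pick $m \geq 1$ with $Q^m = (z_Q)$ principal, giving $z_Q \in G_1(R)$ whose unique prime factor is $Q$; I expect this step to be the heart of the argument, since the torsion hypothesis is precisely what guarantees the existence of a $G_1$-representative concentrated at $Q$. Since $z_Q$ is coprime to $J'$, I would lift its inverse in $(R/J')^\times$ to an element $u \in R$ with $u \equiv 1 \pmod{Q}$ (possible by CRT, as $J'$ and $Q$ are coprime), so that $a := ux$ is coprime to $J'Q$ and $a + J'Q$ is a coprime coset. Applying part (a) to $G_{n-1}(R)$ (or the $G_1$-density hypothesis itself when $n = 2$), I would pick $w \in (a + J'Q) \cap G_{n-1}(R)$; the congruence $w \equiv a \not\equiv 0 \pmod{Q}$ ensures that the $n-1$ primes of $w$ all differ from $Q$. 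Setting $y := z_Q w$ then yields $(y) = Q^m (w)$, a product of $n$ distinct primes, with $y \equiv z_Q ux \equiv x \pmod{J'}$ (since $z_Q u \equiv 1 \pmod{J'}$) and $y \in Q$, as required. Lemma~\ref{lemma:clos-dense} is the other conceptual ingredient, used to transfer the closure computation from the subspace $G_n(R)$ to the ambient space $G(R)$, where the explicit formula is available.
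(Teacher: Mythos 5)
Your proof is correct and follows essentially the same route as the paper's: part (a) by building a product of $n$ pairwise-coprime elements of $G_1(R)$ congruent to $x\cdot 1\cdots 1$ modulo $I$, and part (b) by using Lemma \ref{lemma:clos-dense} to transfer the closure computation to $G(R)$ and exhibiting a witness of the form (element of $G_1(R)$ supported at a fixed prime) times (element of $G_{n-1}(R)$ in a suitable coset). If anything, your version of (b) is slightly cleaner than the paper's: arranging $w\equiv ux\pmod{J'Q}$ with $ux\notin Q$ makes the two factors of the witness automatically coprime and avoids the paper's case split on whether $x+b\in pR$.
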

\begin{proof}
\ref{prop:Gn>1:dense} If $x+bR$ is a subbasic open set, we can find $p_1\in(x+bR)\cap(1+xR)\cap G_1(R)$; then, as $p_1$ is coprime with $x$, the set $x+p_1bR$ is open, and thus we can find $p_2\in (1+p_1bR)\cap G_1(R)$, then $p_3\in(1+p_1p_2bR)\cap G_1(R)$, and so on. Then, $c:=p_1\cdots p_n$ will be an element of $G_n(R)$ (as each $p_i$ is in $G_1(R)$ and $p_i$ and $p_j$ are coprime for $i\neq j$) such that $c\equiv x\cdot 1\cdots 1=x\bmod bR$, i.e., $c\in(x+bR)\cap G_n(R)$. Hence, $G_n(R)$ is dense.

\ref{prop:Gn>1:reg} Fix $n\geq 2$, and let $p\in G_1(R)$. Let $\Omega:=1+pR$, take $x\in\Omega\cap G_n(R)$, and let $O$ be an open set such that $x\in O$ and $O\cap G_n(R)\subseteq\Omega\cap G_n(R)$. We claim that $\overline{O}\cap G_n(R)\nsubseteq\Omega$. Without loss of generality we can take $O=x+bR$, with $b$ coprime to $x$; furthermore, passing if needed to a power $b^k$ we can suppose that $b$ is a product of primary elements.

If $x+b\in pR$, then we can write $x+b=py$ for some $y\in R$, and $py+pbR\subseteq O$ since $x+b+pbR\subseteq x+bR$. Let $O':=y+bR$; then, $O'$ is open (if $y$ and $b$ have a common factor $s$, then $s$ would divide also $x$, a contradiction). Since $G_{n-1}(R)$ is dense, we can find $q\in O'\cap G_{n-1}(R)$; then, $pq\in O\cap G_n(R)$, while $pq\notin\Omega$ as $pq\in pR$. This contradicts $O\cap G_n(R)\subseteq\Omega\cap G_n(R)$.

Therefore, $x+b\notin pR$. Let $b:=b_1\cdots b_n$, where each $b_i$ belongs to $G_1(R)$ and $b_i$ and $b_j$ are coprime if $i\neq j$. If $b_i\in pR$ for some $i$, let $b':=b/b_i$; otherwise, set $b':=b$. Then, $p$ is coprime with $b'$, and thus there is a $z\in R$, coprime with $p$, such that $pz\equiv x\bmod b'R$. By density, there is a $q\in(z+bR)\cap G_{n-1}(R)$; we claim that $pq\in(\overline{O}\cap G_n(R))\setminus\Omega$. Indeed, it is clear that $pq\in G_n(R)$ (since $p\in G_1(R)$, $q\in G_{n-1}(R)$ and $p$ and $q$ are coprime), and $pq\notin\Omega$ since $pq\in pR$.  By \cite[Lemma 15]{clark-golomb},
\begin{equation*}
\overline{O}=\bigcap_i(P_i^\nz\cup(x+b_iR)),
\end{equation*}
where $P_i$ is the prime ideal containing $b_i$. If $b_i$ is not coprime with $p$, then $pq\in P_i^\nz\subseteq\overline{O}$. If $b_i$ is coprime with $p$, then $b_i$ divides $b'$ and
\begin{equation*}
pq\in p(z+bR)=pz+pbR\subseteq pz+b'R=x+b'R\subseteq x+b_iR\subseteq\overline{O}.
\end{equation*}
Hence, $pq\in(\overline{O}\cap G_n(R))\setminus\Omega$.

Let $V:=G_n(R)\setminus\Omega$: then, $V$ is a closed set of $G_n(R)$. If $G_n(R)$ were regular, then there would be disjoint open sets $O_1,O_2$ such that $x\in O_1\cap G_n(R)$ and $V\subseteq O_2\cap G_n(R)$. In particular, $O_1\cap G_n(R)\subseteq G_n(R)\setminus(O_2\cap G_n(R))$, and the latter is a closed set; therefore, the closure $V'$ of $O_1\cap G_n(R)$ inside $G_n(R)$ would be disjoint from $V$. However, by Lemma \ref{lemma:clos-dense},
\begin{equation*}
V'=\overline{O_1\cap G_n(R)}\cap G_n(R)=\overline{O_1}\cap G_n(R);
\end{equation*}
by the previous part of the proof, $\overline{O_1}\cap G_n(R)$ is not contained in $\Omega$, i.e., it meets $V$. This is a contradiction, and thus $G_n(R)$ is not regular.
\end{proof}

\section{Characteristic 0}\label{sect:char0}

We now start studying the Golomb spaces $G(K[X])$ of polynomial rings over fields. In this section, we analyze what happens when the characteristic of the field is $0$. The first result is that we can actually distinguish them from the positive characteristic case.
\begin{prop}\label{prop:caratt0-ptop}
Let $K$ be a field. Then, $K$ has characteristic $0$ if and only if there is an irreducible polynomial $g\in K[X]$ such that $\pow{g}$ is closed in the $P$-topology for every prime ideal $P$ not containing $g$.
\end{prop}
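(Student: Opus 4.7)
The plan is to handle each implication separately, choosing $g=X$ for the forward direction and using the Frobenius identity in characteristic $p$ for the reverse.

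For "$\Rightarrow$", assume $\car K=0$ and take $g=X$. Let $f\in K[X]$ be any irreducible polynomial with $(f)\neq(X)$, and suppose $h\in K[X]\setminus(f)$ lies in the $(f)$-closure of $\pow X$, so that for every $n\geq 1$ there exist $c_n\in K^\nz$ and $m_n\geq 1$ with $c_nX^{m_n}\equiv h\pmod{f^n}$. If $\{m_n\}$ is bounded, pigeonholing to a constant subsequence $m_n=m$ and letting $n\to\infty$, the polynomial $c_nX^m-h$ has bounded degree but is divisible by $f^n$, so it must vanish eventually, giving $h=cX^m\in\pow X$.

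If instead $m_n\to\infty$ along a subsequence, pick $n<n'$ with $m_n<m_{n'}$. Using $c_nX^{m_n}\equiv c_{n'}X^{m_{n'}}\pmod{f^n}$ together with the fact that $X^{m_n}$ is a unit modulo $f^n$ (since $\gcd(X,f)=1$), I obtain $X^k\equiv d\pmod{f^n}$ with $k:=m_{n'}-m_n\geq 1$ and $d:=c_n/c_{n'}\in K^\nz$. Writing $X^k-d=f^nq$ and differentiating gives $kX^{k-1}=nf^{n-1}f'q+f^nq'$, whence $f^{n-1}\mid kX^{k-1}$; coprimality of $X$ and $f$ then forces $f^{n-1}\mid k$ in $K[X]$. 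Since $\car K=0$, the integer $k\geq 1$ is a nonzero constant in $K$, so $\deg f^{n-1}=0$ and thus $n\leq 1$, a contradiction for $n\geq 2$. Hence the unbounded case cannot occur, and $h\in\pow X$.

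For "$\Leftarrow$", assume $\car K=p>0$ and let $g\in K[X]$ be an arbitrary irreducible polynomial; I will exhibit an irreducible $f$ with $(f)\neq(g)$ in which $\pow g$ is not closed. Choose $a\in K$ with $g(a)\neq 0$ (automatic for any $a$ when $\deg g\geq 2$, since $g$ has no root in $K$; when $g$ is associate to $X-b$, take $a\neq b$), and set $f:=X-a$, so that $(f)\neq(g)$. Write $g(X)=g(a)+(X-a)h(X)$ and apply the Frobenius identity in characteristic $p$ to get $g^{p^k}=g(a)^{p^k}+(X-a)^{p^k}h(X)^{p^k}$, which yields $g(a)^{-p^k}g^{p^k}\equiv 1\pmod{(X-a)^{p^k}}$. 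Since $g(a)^{-p^k}\in K^\nz$, this shows $1\in\overline{\pow g}$ in the $(X-a)$-topology, while $1\notin\pow g$ because $g$ is a nonconstant irreducible.

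The main obstacle is ruling out the unbounded case in the forward direction; the derivative trick above is the cleanest approach I see, and it crucially uses $\car K=0$, since in characteristic $p$ the exponent $k$ may be divisible by $p$, killing the derivative and preventing the argument from going through — precisely the phenomenon exploited by the reverse direction.
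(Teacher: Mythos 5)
Your proof is correct and takes essentially the same route as the paper's: choose $g=X$, use a formal-derivative argument (relying on $\car K=0$ to keep the derivative of a power of $X$ nonzero and coprime to $f$) for the forward direction, and the Frobenius identity $\bigl(1-g/g(a)\bigr)^{p^k}=1-g^{p^k}/g(a)^{p^k}$ modulo $(X-a)^{p^k}$ for the converse. The only difference is organizational: the paper differentiates a single approximant $\deg\lambda+1$ times to kill the limit point, whereas you compare two approximants and differentiate once; both hinge on exactly the same facts.
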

\begin{proof}
Suppose $K$ has characteristic $0$, and choose $g(X):=X$. Let $P=(f)$ be a prime ideal not containing $g$, and let $\lambda\notin(P\cup\pow{g})$: suppose that $\lambda$ is in the closure of $\pow{g}$ in the $P$-topology. Then, for every $n\inN^+$ the open set $\lambda+P^n$ contains an element of $\pow{g}$. Take $n>\deg\lambda+1$: then, there are $m\inN^+$ and $u\in K^\nz$ such that $ug^m\in\lambda+P^n$, i.e., $f^n$ divides $h:=\lambda-ug^m$. Since $\lambda\notin\pow{g}$, $h\neq 0$, and thus $m\geq n$. Let $H$ the $(\deg\lambda+1)$-th formal derivative of $h$: then, $\lambda$ goes to $0$, and thus $H$ is the $(\deg\lambda+1)$-th formal derivative of $-ug^m=-uX^m$, that is, $H(X)=cX^{m-\deg\lambda-1}$ for some $c\in K$. Since $\car K=0$ and $m>\deg\lambda+1$, we have $H\neq 0$, and thus its unique zero is $0$. This contradicts the facts that $f|H$ and that $f$ is coprime with $X$. Hence, $\pow{g}$ is closed in the $(f)$-topology.

Conversely, suppose there is a polynomial $g$ with this property, and suppose that $\car K=p>0$. Let $a\in K$ be such that $g(a)\neq 0$ (which exists since $g$ is irreducible). Then, $f(X):=X-a$ divides $1-\frac{g(X)}{g(a)}$, and thus $f^{p^n}$ divides $\left(1-\frac{g(X)}{g(a)}\right)^{p^n}=1-\frac{g(X)^{p^n}}{g(a)^{p^n}}$, that is, $1+(f)^{p^n}$ meets $\pow{g}$. Therefore, $1+(f)^k$ meets $\pow{g}$ for every $k\inN^+$, i.e., $1$ is in the closure of $\pow{g}$ in the $(f)$-topology. This contradicts the choice of $g$, and thus the characteristic of $K$ must be $0$, as claimed.
\end{proof}

\begin{cor}\label{cor:car0p}
Let $K_1,K_2$ be fields. If $\car K_1=0$ and $\car K_2>0$, then the Golomb spaces $G(K_1[X])$ and $G(K_2[X])$ are not homeomorphic.
\end{cor}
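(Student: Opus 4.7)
The plan is to argue by contradiction, using Proposition \ref{prop:caratt0-ptop} as a bridge: the property it characterizes is manifestly topological (it refers only to $\pow{g}$ sets and $P$-adic topologies), and all the ingredients have already been shown in Section \ref{sect:prelim} to be preserved by Golomb homeomorphisms. Suppose $h : G(K_1[X]) \longrightarrow G(K_2[X])$ is a homeomorphism. Since $K_1[X]$ and $K_2[X]$ are PIDs, their class groups are trivial (hence torsion), so the results recalled in Section \ref{sect:prelim} apply: $h$ sends prime ideals to prime ideals, it sends each $G_{\{P\}}(K_1[X])$ onto some $G_{\{Q\}}(K_2[X])$ with $Q = h(P^\nz)\cup\{0\}$, and for every prime ideal $P$ of $K_1[X]$ it restricts to a homeomorphism from $K_1[X]\setminus P$ (with the $P$-topology) onto $K_2[X]\setminus Q$ (with the $Q$-topology).

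Since $\car K_1=0$, Proposition \ref{prop:caratt0-ptop} supplies an irreducible $g_1\in K_1[X]$ such that $\pow{g_1}$ is closed in the $P$-topology for every prime $P$ of $K_1[X]$ not containing $g_1$. Let $P_1:=(g_1)$, so that $\pow{g_1}=G_{\{P_1\}}(K_1[X])$, and set $Q_1:=h(P_1^\nz)\cup\{0\}$. Choose any generator $g_2$ of $Q_1$; then $\pow{g_2}=G_{\{Q_1\}}(K_2[X])$, and the remarks of the previous paragraph give $h(\pow{g_1})=\pow{g_2}$. I would then show that $g_2$ inherits the closure property of Proposition \ref{prop:caratt0-ptop}: given any prime $Q$ of $K_2[X]$ with $g_2\notin Q$, pull $Q$ back through $h$ to a prime $P=h^{-1}(Q^\nz)\cup\{0\}$ of $K_1[X]$, which necessarily does not contain $g_1$ (otherwise $G_{\{P_1\}}(K_1[X])$ and $G_{\{Q\}}(K_2[X])$ would overlap under $h$). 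By hypothesis $\pow{g_1}$ is closed in the $P$-topology on $K_1[X]\setminus P$, and since $h$ restricts to a homeomorphism between that space and $(K_2[X]\setminus Q, Q\text{-topology})$ mapping $\pow{g_1}$ to $\pow{g_2}$, the latter is closed in the $Q$-topology.

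Therefore $g_2$ satisfies the condition of Proposition \ref{prop:caratt0-ptop} in $K_2[X]$, which by the converse direction of that proposition forces $\car K_2=0$, contradicting the assumption $\car K_2>0$. Hence no such homeomorphism exists.

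The only genuine check, rather than a new idea, is the bookkeeping that the correspondence $P\mapsto h(P^\nz)\cup\{0\}$ really is a bijection between primes of $K_1[X]$ avoiding $g_1$ and primes of $K_2[X]$ avoiding $g_2$, and that ``closed in the $P$-topology'' transports literally through the restriction of $h$; both are immediate from the cited results once $h(\pow{g_1})=\pow{g_2}$ is in hand, so I expect no serious obstacle beyond this verification.
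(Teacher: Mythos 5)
Your proof is correct and follows essentially the same route as the paper: the paper likewise observes that Proposition \ref{prop:caratt0-ptop} recasts $\car K=0$ as the existence of a prime ideal $P$ with $G_{\{P\}}(K[X])$ closed in every other $Q$-topology, and that this is a topological invariant because homeomorphisms send prime ideals to prime ideals and respect the $P$-topologies. Your version merely spells out the bookkeeping (the bijection of primes and the transport of closedness) that the paper leaves implicit.
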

\begin{proof}
If $g$ is an irreducible polynomial of $K[X]$, then $\pow{g}=G_{\{(g)\}}(K[X])$. By the previous proposition, $\car K=0$ if and only if there is a prime ideal $P$ such that $G_{\{P\}}(K[X])$ is closed in the $Q$-topology for every prime ideal $Q\neq P$. Since any homeomorphism of Golomb spaces sends prime ideals into prime ideals, this property is preserved by homeomorphisms. In particular, if $G(K_1[X])\simeq G(K_2[X])$ then $\car K_1=0$ if and only if $\car K_2=0$.
\end{proof}

Note that the proof of Proposition \ref{prop:caratt0-ptop} is qualitative, and thus cannot be readily applied to distinguish different positive characteristics. We shall do this in the algebraically closed case in Theorem \ref{teor:algchiusi-char}.

We now study the algebraically closed and the real closed case.
\begin{prop}\label{prop:C-GnK}
Let $K$ be an algebraically closed field of characteristic $0$. For every $n\geq 0$, $G_n(K[X])$ is discrete and closed in $G(K[X])$.
\end{prop}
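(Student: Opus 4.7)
The plan is to prove a single statement that implies both conclusions: for every $f \in K[X]^\nz$, there exists a basic open set $U \ni f$ in $G(K[X])$ such that $U \cap G_n(K[X]) \subseteq \{f\}$. When $f \in G_n(K[X])$ this says $\{f\}$ is open in the subspace topology on $G_n(K[X])$, giving discreteness; when $f \notin G_n(K[X])$ it gives an open neighborhood of $f$ disjoint from $G_n(K[X])$, so $G_n(K[X])$ is closed.

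Since $K$ is algebraically closed, every nonzero prime of $K[X]$ is generated by a linear polynomial, so I write $f = u\prod_{i=1}^m (X - a_i)^{e_i}$ with $u \in K^\nz$, distinct $a_i \in K$ and $e_i \geq 1$. Pick any $b \in K \setminus \{a_1,\ldots,a_m\}$ (possible because $K$ is infinite) and set $M := n + m + 1$; the candidate neighborhood is $U := f + ((X-b)^M)$, a basic Golomb-open set since $X-b$ is coprime to $f$. For $g \in U \cap G_n(K[X])$ write $g = v\prod_{j=1}^n (X - c_j)^{d_j}$ with $v \in K^\nz$, distinct $c_j \in K$ and $d_j \geq 1$; the whole argument reduces to showing $g = f$.

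The key step is to pass to the $(X-b)$-adic completion $K[[X-b]]$. From $g \equiv f \pmod{(X-b)^M}$ and $f(b)\neq 0$ we get $g/f \in 1 + (X-b)^M K[[X-b]]$, so its formal logarithm, well-defined because $\car K = 0$, lies in $(X-b)^M K[[X-b]]$. Differentiating and using the partial-fraction identities $(\log g)' = \sum_j d_j/(X-c_j)$ and $(\log f)' = \sum_i e_i/(X-a_i)$, one finds that the rational functions
\[
R^g(t) := \sum_{j=1}^n \frac{d_j\,\xi_j}{1-\xi_j t}, \qquad R^f(t) := \sum_{i=1}^m \frac{e_i\,\eta_i}{1-\eta_i t},
\]
with $\xi_j := 1/(b-c_j)$ and $\eta_i := 1/(b-a_i)$, have identical power-series expansions modulo $t^{M-1}$. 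Writing $R^g - R^f = N(t)/D(t)$ over the common denominator $\prod_j(1-\xi_j t)\prod_i(1-\eta_i t)$, one has $D(0)\neq 0$ and $\deg N \leq n+m-1$, while the power series of $R^g-R^f$ vanishes to order $\geq M-1 = n+m$. A Padé-type argument then forces $N = 0$, so $R^g = R^f$ as rational functions. Comparing simple poles and residues gives $n = m$, $\{c_j\} = \{a_i\}$ as sets, and $d_j = e_j$ after relabeling, so $g = (v/u)\, f$; the evaluation $g(b) = f(b)$ gives $v = u$ and hence $g = f$.

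Both cases of the proposition now follow at once: if $m\neq n$ the pole-count mismatch makes the existence of any such $g$ impossible, so $U \cap G_n(K[X]) = \emptyset$; if $m=n$ we conclude $U \cap G_n(K[X]) = \{f\}$. The main technical hurdle is calibrating $M$ so that the Padé bound $\deg N < M-1$ takes effect, and the value $M = n+m+1$ is exactly what is needed. The hypothesis $\car K = 0$ is used twice: to define the formal logarithm on $1$-units of $K[[X-b]]$, and to ensure that the positive integer multiplicities $d_j, e_i$ are nonzero in $K$, so the advertised simple poles of $R^g$ and $R^f$ genuinely appear and the residue comparison is meaningful. Algebraic closedness is used to reduce every element of $G_n(K[X])$ to a product of linear factors, which is what makes the logarithmic derivatives take the required simple-pole form.
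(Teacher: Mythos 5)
Your proof is correct, and it reaches both conclusions by a genuinely different mechanism than the paper's. The paper fixes $d:=\deg f$, differentiates $d+1$ times to annihilate $f$, and then counts zeros of $q^{(d+1)}$ with multiplicity (one zero of order at least $N-d-1$ at $b$, plus order at least $e_i-d-1$ at each root of $q$), obtaining a total exceeding $\deg q^{(d+1)}=\deg q-(d+1)$ once $N>n(d+1)$; characteristic $0$ enters only to guarantee that $q^{(d+1)}$ has the expected degree. You instead take a single logarithmic derivative in the completion $K[[X-b]]$, reduce to an identity of rational functions via a Pad\'e bound, and recover $g=f$ by comparing simple poles and residues; characteristic $0$ enters to keep the residues $d_j,e_i$ nonzero in $K$. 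Your argument is heavier in machinery but yields a slightly sharper output: it pins down $g$ exactly (showing $U\cap G_n(K[X])\subseteq\{f\}$ directly rather than deriving a contradiction from $q\neq p$), and it isolates precisely why the statement fails in positive characteristic --- multiplicities divisible by $p$ make poles of the logarithmic derivative disappear, which is the phenomenon underlying Proposition \ref{prop:Gn-pinfty}. One small presentational point: the formal logarithm is dispensable, since $g'/g-f'/f=\bigl((g'-f')f-f'(g-f)\bigr)/(gf)$ already lies in $(X-b)^{M-1}K[[X-b]]$ because $gf$ is a unit of $K[[X-b]]$; so the only essential use of $\car K=0$ in your argument is the nonvanishing of the residues.
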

\begin{proof}
Let $p(X)\in K[X]$, and let $b\in K$ be such that $p(b)\neq 0$ (which exists since $K$ is infinite). We claim that, for large $N$, the only possible element of $(p(X)+(X-b)^NK[X])\cap G_n(K[X])$ is $p(X)$.

Indeed, suppose that $q(X)\in(p(X)+(X-b)^NK[X])\cap G_n(K[X])$ is different from $p(X)$: then, we have
\begin{equation*}
\begin{cases}
q(X)=p(X)+(X-b)^Na(X)\\
q(X)=u(X-a_1)^{e_1}\cdots(X-a_n)^{e_n},
\end{cases}
\end{equation*}
where $a(X)\neq 0$, $a_1,\ldots,a_n$ are distinct, $e_1,\ldots,e_n\geq 1$ and $u\in K$. Let $d:=\deg p$, and  apply $d+1$ times the formal derivative process. In the first equation, $p^{(d+1)}$ becomes $0$, and thus (since $a(X)\neq 0$) $q^{(d+1)}$ has a zero of multiplicity $N-d-1$ in $b$. In the second equation, at each step the multiplicity of each $a_i$ is lowered by $1$, and thus each $a_i$ is a zero of multiplicity at least $e_i-d-1$ (this holds even if $e_i<d+1$). Since $p(X)$ and $X-b$ are coprime, it follows that $b\neq a_i$ for each $i$; hence, the total multiplicities of the zeros of $q^{(d+1)}$ is at least
\begin{equation*}
N-d-1+\sum_i(e_i-d-1)=N+\sum_ie_i-(n+1)(d+1)=N+\deg q-(n+1)(d+1).
\end{equation*}
Both $n$ and $d$ are fixed; hence, choosing $N>n(d+1)$, we have (using the fact that $K$ has characteristic $0$)
\begin{equation*}
\deg q^{(d+1)}>n(d+1)+\deg q-(n+1)(d+1)=\deg q-(d+1)=\deg q^{(d+1)},
\end{equation*}
a contradiction. Hence, $(p(X)+(X-b)^NK[X])\cap G_n(K[X])$ contains at most $p(X)$. 

Therefore, if $p(X)\notin G_n(K[X])$ then $p(X)+(X-b)^NK[X]$ is disjoint from $G_n(K[X])$, and thus $p(X)$ is not in the closure of $G_n(K[X])$; on the other hand, if $p(X)\in G_n(K[X])$ then $(p(X)+(X-b)^NK[X])\cap G_1(K[X])=\{p(X)\}$ is an open set of $G_n(K[X])$. Hence, $G_n(K[X])$ is discrete and closed in $G(K[X])$, as claimed.
\end{proof}

\begin{cor}
Let $K$ be a real closed field. For every $n\geq 0$, $G_n(K[X])$ is discrete and closed in $G(K[X])$.
\end{cor}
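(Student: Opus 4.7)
The plan is to reduce to the algebraically closed case already handled in Proposition \ref{prop:C-GnK} by passing to the algebraic closure $L:=K[i]=K(\sqrt{-1})$, which is an algebraically closed field of characteristic $0$ (since every real closed field has characteristic $0$).

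The key algebraic observation is that every monic irreducible polynomial of $K[X]$ has degree $1$ or $2$: a linear one stays irreducible in $L[X]$, while an irreducible quadratic over $K$ splits in $L[X]$ into two \emph{distinct} linear factors (distinctness comes from the fact that a repeated root would force the root to be in $K$). Consequently, if $q\in G_n(K[X])$ then $q$ has at most $2n$ distinct monic irreducible factors in $L[X]$, i.e.
\begin{equation*}
G_n(K[X])\subseteq\bigcup_{m=0}^{2n}G_m(L[X]).
\end{equation*}

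Now I would mimic the argument of Proposition \ref{prop:C-GnK}. Fix $p(X)\in K[X]^\nz$ and choose $b\in K$ with $p(b)\neq 0$, which exists because $K$ is infinite. The coset $p+(X-b)^NK[X]$ is a coprime coset in $K[X]$, hence open in $G(K[X])$, and it is contained in $p+(X-b)^NL[X]$, which is open in $G(L[X])$. The calculation in the proof of Proposition \ref{prop:C-GnK} shows that for any $m\geq 0$ and $N>m(\deg p+1)$, the intersection $(p+(X-b)^NL[X])\cap G_m(L[X])$ is contained in $\{p\}$. Taking $N>2n(\deg p+1)$ handles all $m\leq 2n$ simultaneously, and therefore
\begin{equation*}
(p+(X-b)^NK[X])\cap G_n(K[X])\subseteq (p+(X-b)^NL[X])\cap\bigcup_{m=0}^{2n}G_m(L[X])\subseteq\{p\}.
\end{equation*}

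From here the conclusion is exactly as in Proposition \ref{prop:C-GnK}: if $p\notin G_n(K[X])$ the displayed neighborhood witnesses that $p$ is not in the closure of $G_n(K[X])$, so $G_n(K[X])$ is closed; if $p\in G_n(K[X])$ the same neighborhood intersects $G_n(K[X])$ only in $\{p\}$, so $G_n(K[X])$ is discrete. The main technical point is the passage from $K$ to $L$: one must verify that a coprime coset in $K[X]$ really sits inside the corresponding coset in $L[X]$ (obvious), and that the factorization bound $r\leq 2n$ in $L[X]$ is preserved so that the bound $N>2n(\deg p+1)$ suffices uniformly in $m\leq 2n$. Everything else is a routine transcription of the characteristic-zero derivative argument of Proposition \ref{prop:C-GnK}.
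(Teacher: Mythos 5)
Your proof is correct and follows essentially the same route as the paper: pass to the algebraic closure $L$, use that each irreducible of $K[X]$ has degree at most $2$ to get $G_n(K[X])\subseteq G_n(L[X])\cup\cdots\cup G_{2n}(L[X])$, and invoke the discreteness/closedness argument of Proposition \ref{prop:C-GnK}. You merely unwind that proposition's proof to make explicit that a single $N>2n(\deg p+1)$ works uniformly for all $m\leq 2n$ and that the isolating neighborhood can be taken as a coprime coset of $K[X]$ itself, which the paper leaves implicit.
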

\begin{proof}
Let $K'$ be the algebraic closure of $K$. Then, the inclusion map $G(K[X])\longrightarrow G(K'[X])$ sends $G_n(K[X])$ inside $G':=G_n(K'[X])\cup\cdots\cup G_{2n}(K'[X])$; the latter set is discrete, and thus so is $G_n(K[X])$. Furthermore, $G_n(K[X])$ is closed in $G'$, and thus $G_n(K[X])$ is also closed in $G(K'[X])$; in particular, $G_n(K[X])$ is closed in $G(K[X])$.
\end{proof}

These results can be used, for example, to distinguish $G(\insQ[X])$ from $G(\overline{\insQ}[X])$. See Section \ref{sect:G1}.

\section{Separably closed fields in characteristic $p$}\label{sect:charp-sepclos}
In this section, we analyze what happens to fields of positive characteristic that are separably or algebraically closed. The first step is distinguishing them from the other fields; the following proof is similar to the proof of Proposition \ref{prop:caratt0-ptop}.
\begin{prop}\label{prop:sepchiusi}
Let $K$ be a field of characteristic $p>0$, and suppose that $K$ is transcendental over $\ins{F}_p$. Then, $K$ is separably closed if and only if, for every coprime irreducible polynomials $f,g$ of $K[X]$, $G_0(R)$ is contained in the closure of $\pow{g}$ in the $(f)$-topology.
\end{prop}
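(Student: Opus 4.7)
The plan is to prove the two implications separately. The forward direction parallels the reverse of Proposition~\ref{prop:caratt0-ptop} and exploits Frobenius, while the backward direction proceeds by contrapositive using Galois-theoretic arguments.

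\emph{Forward direction.} Assume $K$ is separably closed. Every irreducible polynomial in $K[X]$ is then of the form $f(X)=X^{p^k}-c$ for some $k\geq 0$ and $c\in K$. Let $\alpha$ be a root of $f$; since $K(\alpha)/K$ is purely inseparable of exponent $p^k$, the element $c_0:=g(\alpha)^{p^k}$ lies in $K$, and $c_0\neq 0$ because $f,g$ are coprime. Using $X^{p^k}\equiv c\pmod f$ one computes $g(X)^{p^k}\equiv c_0\pmod f$; then the Frobenius identity $(a+b)^{p^N}=a^{p^N}+b^{p^N}$ gives $g(X)^{p^{k+N}}\equiv c_0^{p^N}\pmod{f^{p^N}}$. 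Setting $u:=\lambda/c_0^{p^N}\in K^\times$, the product $u\,g^{p^{k+N}}$ lies in $\lambda+(f)^{p^N}\subseteq\lambda+(f)^n$ once $p^N\geq n$, so $\lambda\in\overline{\pow{g}}$ in the $(f)$-topology.

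\emph{Backward direction (contrapositive).} Assume $K$ is not separably closed, and split into two cases. If $\ins{F}_{p^\infty}\not\subseteq K$, choose $\alpha\in\ins{F}_{p^\infty}\setminus K$ and let $f$ be its minimal polynomial over $K$: this is separable irreducible of degree $d\geq 2$ with roots in $\ins{F}_{p^\infty}$. Set $g:=X-t$ for any $t\in K$ transcendental over $\ins{F}_p$. If $(\alpha-t)^m\in K$ for some $m\geq 1$, then taking a Galois automorphism $\sigma$ with $\sigma\alpha\neq\alpha$ gives $\sigma\alpha-t=\zeta(\alpha-t)$ for a root of unity $\zeta\neq 1$, whence $t=(\sigma\alpha-\zeta\alpha)/(1-\zeta)\in\ins{F}_{p^\infty}$, contradicting the transcendence of $t$. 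If instead $\ins{F}_{p^\infty}\subseteq K$, I pick any separable irreducible $f\in K[X]$ of degree $d\geq 2$ and set $g:=X-t$ with $t\in K$ transcendental over $\ins{F}_p$ and (in case $p\nmid d$) distinct from $-a_{d-1}/d$, where $a_{d-1}$ is the $X^{d-1}$-coefficient of $f$; such $t$ exists since $K$ contains infinitely many transcendentals. In both cases, once $(\alpha-t)^m\notin K$ for all $m\geq 1$ is established, the image $u(\alpha-t)^m$ of $u\,g^m$ in $K[X]/(f)$ cannot equal any $\lambda\in K^\times$, so $u\,g^m\not\equiv\lambda\pmod{f^n}$ for any $n$ and $\lambda\notin\overline{\pow{g}}$, defeating $G_0(K[X])\subseteq\overline{\pow{g}}$.

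\emph{Main obstacle.} The substantive work lies in the case $\ins{F}_{p^\infty}\subseteq K$, where the direct argument fails because $(\sigma\alpha-\zeta\alpha)/(1-\zeta)$ need no longer be algebraic over $\ins{F}_p$. Suppose $m_0\geq 2$ is minimal with $(\alpha-t)^{m_0}\in K$. For each Galois automorphism $\sigma$, $\zeta_\sigma:=\sigma(\alpha-t)/(\alpha-t)$ is an $m_0$-th root of unity lying in $\ins{F}_{p^\infty}\subseteq K$; since $\zeta_\sigma$ determines $\sigma\alpha=t+\zeta_\sigma(\alpha-t)$ and $\sigma\alpha$ ranges over the $d$ roots of $f$, the set $S:=\{\zeta_\sigma\}$ has cardinality $d$ and contains $1$. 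Expanding $f(X)=\prod_{\zeta\in S}(Y-\zeta\beta)$ with $Y=X-t$ and $\beta=\alpha-t$, the coefficient of $Y^{d-j}$ equals $(-1)^je_j(S)\beta^j$ and must lie in $K$; since $e_j(S)\in K$, the minimality of $m_0$ forces $e_j(S)=0$ whenever $m_0\nmid j$. Thus $f(X)=F((X-t)^{m_0})$ for some irreducible $F\in K[Z]$, and since $F$ has the root $(\alpha-t)^{m_0}\in K$, $\deg F=1$ and $m_0=d$. Then $S=\mu_d$ and $f(X)=(X-t)^d-(\alpha-t)^d$; matching the $X^{d-1}$-coefficient gives $-dt=a_{d-1}$, contradicting $t\neq-a_{d-1}/d$ if $p\nmid d$, or contradicting the separability of $f$ (as $f'=d(X-t)^{d-1}=0$) if $p\mid d$.
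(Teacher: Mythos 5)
Your proof is correct. The forward direction is essentially the paper's argument (write the purely inseparable $f$ as $X^{p^k}-c$, use Frobenius to get $ug^{p^{k+N}}\in\lambda+(f)^{p^N}$); the only cosmetic difference is that you treat an arbitrary $\lambda\in K^\times$ directly where the paper reduces to $\lambda=1$ by unit-invariance of $\pow{g}$. The backward direction, however, takes a genuinely different route. The paper translates $f$ so that its roots $\alpha,\beta$ become transcendental, takes $g=X-t$ with $t\in K\cap\ins{F}_{p^\infty}$, and derives a contradiction from the explicit formula $\beta=t(r(t)-1)/(r(t)-r)$ once the ratios $r(t)=(\alpha-t)/(\beta-t)$ are known to be roots of unity. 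You instead take $t$ transcendental over $\ins{F}_p$ and, after the easy case $\ins{F}_{p^\infty}\not\subseteq K$, run a structural analysis: the ratios $\zeta_\sigma=\sigma(\alpha-t)/(\alpha-t)$ are $m_0$-th roots of unity lying in $K$, the elementary symmetric functions force $f(Y+t)=F(Y^{m_0})$, and minimality plus irreducibility pin $f$ down to $(X-t)^d-(\alpha-t)^d$, which is killed either by separability (if $p\mid d$) or by your choice of $t$ (if $p\nmid d$). Your version costs a case split on whether $\ins{F}_{p^\infty}\subseteq K$ and is longer, but it is fully self-contained and avoids the paper's somewhat delicate step of first handling $t=0$ to know that $r(0)=\alpha/\beta$ is itself a root of unity before the formula for $\beta$ can be exploited; it also yields the stronger conclusion that \emph{no} element of $G_0(K[X])$ lies in the closure of $\pow{X-t}$, not merely that $1$ does not.
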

\begin{proof}
Suppose first that $K$ is separably closed; since $\pow{g}$ is invariant under multiplication by units, it is enough to show that $1$ is in the closure of $\pow{g}$. Write $f(X)=X^{p^n}-a$, and let $\alpha$ be a root of $f$ in the algebraic closure $\overline{K}$ of $K$. Then, $h:=1-\inv{g(\alpha)}g$ is a polynomial over $\overline{K}$ having $\alpha$ as a zero, and thus $X-\alpha$ divides $h$; hence, $f(X)=(X-\alpha)^{p^n}$ divides
\begin{equation*}
h^{p^n}=\left(1-\inv{g(\alpha)}g\right)^{p^n}=1-\inv{g(\alpha)^{p^n}}g^{p^n}
\end{equation*}
in $\overline{K}[X]$. However, $g(\alpha)^{p^n}\in K[X]$, and thus $f$ divides $h^{p^n}$ also in $K[X]$. Therefore, for every power $q$ of $p$, $f^q$ divides $(h^{p^n})^q=1-\inv{g(\alpha)^{qp^n}}g^{qp^n}$, and in particular $1+f^qK[X]$ contains an element of $\pow{q}$. Thus, $1$ is in the closure of $\pow{q}$ under the $(f)$-topology, as claimed.

Conversely, suppose that $K$ is not separably closed, let $f$ be a separable irreducible polynomial, and let $\alpha,\beta$ be two distinct roots of $f$ in the algebraic closure of $K$; since $K$ is transcendental over $\ins{F}_p$, we can suppose that $\alpha,\beta$ are transcendental too. We claim that there is a $t\in K\cap\ins{F}_{p^{\infty}}$ such that $1$ is not in the closure of $\pow{X-t}$ in the $(f)$-topology. Indeed, suppose $1$ is in the closure for some $t$. Then, $\pow{X-t}$ meets $1+fK[X]$, and in particular there are a unit $u$ and an integer $m$ such that $f$ divides $1-u(X-t)^m$. Hence, we must have $1-u(\alpha-t)^m=0=1-u(\beta-t)^m$, and thus $(\alpha-t)/(\beta-t)$ must be a root of unity (of degree at most $m$), and in particular it must be algebraic over $\ins{F}_p$.

Let $r(t):=(\alpha-t)/(\beta-t)$ and $r:=r(0)=\alpha/\beta$. Then,
\begin{equation*}
r(t)=\frac{\alpha-t}{\beta-t}=\frac{r\beta-t}{\beta-t}\Longrightarrow\beta=\frac{t(r(t)-1)}{r(t)-r}
\end{equation*}
whenever $t\neq 0$ (which implies $r(t)\neq r$). However, both $t$ and $r(t)$ are algebraic over $\ins{F}_p$, and thus $\beta$ should be algebraic too; this is a contradiction, and thus $1$ is not in the closure of any $\pow{X-t}$ with $t\neq 0$.
\end{proof}

The following proposition shows the difference between the behavior of $G_n(K[X])$ in positive characteristic with respect to the characteristic 0 case (Proposition \ref{prop:C-GnK}). Part \ref{prop:Gn-pinfty:dense} does not hold without the assumption that $K$ is separably closed; indeed, its failure is critical in the proof of Proposition \ref{prop:Homeofix}.
\begin{prop}\label{prop:Gn-pinfty}
Let $K$ be a field of characteristic $p>0$. Then, the following hold.
\begin{enumerate}[(a)]
\item\label{prop:Gn-pinfty:dense} If $K$ is separably closed, then $G_1(K[X])$ is not dense in $G(K[X])$.
\item\label{prop:Gn-pinfty:isolated} If $K$ is algebraic over $\ins{F}_p$, then $G_n(K[X])$ has no isolated points for all $n\geq 1$.
\item\label{prop:Gn-pinfty:closure} If $K$ is algebraic over $\ins{F}_p$, then $G_m(K[X])$ is contained in the closure of $G_n(K[X])$ for all $n\geq m\geq 0$.
\end{enumerate}
\end{prop}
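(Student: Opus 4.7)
For part (a), the plan is to exhibit an explicit basic open set disjoint from $G_1(K[X])$. The starting observation is that, since $K$ is separably closed in characteristic $p$, every irreducible of $K[X]$ is either linear or purely inseparable of the form $X^{p^n}-a$, so over $\overline{K}$ it is a pure power of a linear factor; consequently every $y\in G_1(K[X])$ has the shape $y=u(X-c)^m$ with $c\in\overline{K}$, $m\geq 1$, $u\in K^*$, and satisfies the identity $(X-c)y'=my$ in $\overline{K}[X]$. I would then take $x:=X(X-1)$ (which lies in $G_2(K[X])$) and the neighborhood $\Omega:=x+(X-b)^NK[X]$ for some $b\in K\setminus\{0,1\}$ (possible since $K$ is infinite) and $N\geq 4$. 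Writing an element $y\in\Omega\cap G_1(K[X])$ as $y=x+(X-b)^Nh$ and substituting into $(X-c)y'-my=0$ shows that $(X-c)x'-mx$ must be divisible by $(X-b)^{N-1}$ in $\overline{K}[X]$; however,
\begin{equation*}
(X-c)x'-mx=(2-m)X^2+(m-1-2c)X+c
\end{equation*}
has degree at most $2$, so for $N\geq 4$ it must vanish identically. This forces $c=0$ together with $m\equiv 2$ and $m\equiv 1$ in $K$, i.e.\ $1=0$, a contradiction. The delicate point is arranging the divisibility so that two incompatible residues of $m$ modulo $p$ are forced at once, and this is where I expect most of the care to go.

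For parts (b) and (c), the plan is to extract both conclusions from a single algebraic observation: for $K$ algebraic over $\ins{F}_p$ and any nonzero $b\in K[X]$, the unit group $(K[X]/(b))^*$ is a torsion group. I would prove this by factoring $b=\prod g_i^{e_i}$ and applying the Chinese remainder theorem to reduce to the local case, where the residue field $K[X]/(g_i)$ is a finite extension of $K$ (hence still algebraic over $\ins{F}_p$, with torsion unit group) and the principal units $1+\mathfrak{m}$ form a $p$-group by the Frobenius identity $(1+x)^{p^k}=1+x^{p^k}$ applied to nilpotent $x$.

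Granted this, part (b) is immediate: given $y\in G_n(K[X])$ with $n\geq 1$ and a basic neighborhood $y+bK[X]$, letting $N$ be the multiplicative order of $y$ in $(K[X]/(b))^*$, we have $y^{N+1}\equiv y\pmod{b}$, so $y^{N+1}\in(y+bK[X])\cap G_n(K[X])$, and $y^{N+1}\neq y$ as polynomials since $\deg y\geq 1$. For part (c), the case $n=m$ is trivial; for $n>m$ I would ``enlarge'' $y$ multiplicatively. Setting $b':=by$, I pick $n-m$ distinct irreducibles $g_1,\ldots,g_{n-m}\in K[X]$ coprime to $b'$ (available since $K[X]$ has infinitely many primes, only finitely many of which divide $b'$). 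By the torsion observation, each $g_i$ admits a power $g_i^{N_i}\equiv 1\pmod{b'}$, so $y':=y\prod_ig_i^{N_i}$ satisfies $y'\equiv y\pmod{b'}$ (hence lies in $y+bK[X]$) and has exactly the $m$ primes of $y$ together with the $n-m$ new primes $g_i$; thus $y'\in G_n(K[X])$.
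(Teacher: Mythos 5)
Your proof is correct, and part (a) follows a genuinely different route from the paper's. The paper fixes a specific target polynomial ($1+X^2$ for $p\geq 3$, $1+X+X^3$ for $p=2$), expands $u(X-a)^k$ by the binomial theorem, and derives a contradiction by comparing low-degree coefficients — which forces a case split on the characteristic. Your argument instead exploits the Euler-type identity $(X-c)y'=my$ satisfied by every $y=u(X-c)^m$ (valid since, over a separably closed $K$, every irreducible becomes a pure power of a linear factor in $\overline{K}[X]$), and the divisibility of the degree-$\leq 2$ polynomial $(X-c)x'-mx$ by $(X-b)^{N-1}$ forces the incompatible congruences $m\equiv 1$ and $m\equiv 2$ modulo $p$ simultaneously; this is uniform in $p$ and avoids the case distinction, at the cost of passing to $\overline{K}$. (I checked the computation: with $x=X^2-X$ one gets $(2-m)X^2+(m-1-2c)X+c$, and its vanishing indeed yields $c=0$, $m=1$, $m=2$ in $K$.) For parts (b) and (c) your argument is essentially the paper's: the paper descends to a finite subfield $\ins{F}_q$ containing the relevant coefficients to get finiteness of $\ins{F}_q[X]/(b)$, whereas you isolate the cleaner statement that $(K[X]/(b))^*$ is torsion whenever $K$ is algebraic over $\ins{F}_p$ (via CRT, torsion residue fields, and Frobenius on principal units); this packaging also lets you treat (c) uniformly, where the paper splits into the algebraically closed and non-algebraically-closed cases and uses linear factors $(X-t_i)$ in place of your arbitrary auxiliary irreducibles $g_i$. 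Your choice of modulus $b'=by$ correctly guarantees the new primes are coprime to $y$, so $y'$ lands in $G_n(K[X])$ as needed.
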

Note that all three of these hypothesis are fulfilled when $K$ is the algebraic closure of $\ins{F}_p$.
\begin{proof}
\ref{prop:Gn-pinfty:dense} Suppose first $p\geq 3$, and consider the open set $1+X^2+X^3K[X]$: if it intersects $G_1(K[X])$ then there are an irreducible polynomial $g(X)$, $u\in K$, $k\inN$ and $b(X)\in K[X]$ such that $ug(X)^k=1+X^2+X^3b(X)$. Since $K$ is separably closed, we can write $g(X)=X^{p^r}-a$ for some $r\geq 0$ and some $a\in K$. If $r>0$, then $ug(X)^k$ has no monomial of degree 2, a contradiction; hence, it must be $g(X)=X-a$. Considering the coefficients of degree 1 and 2, we have
\begin{equation*}
\begin{cases}
0=u\binom{k}{1}a^{k-1}=uk(-1)^{k-1}a^{k-1}\\
1=u\binom{k}{2}a^{k-2}=u\frac{k(k-1)}{2}(-1)^{k-2}a^{k-2}.
\end{cases}
\end{equation*}
The second equality implies that $k$, $k-1$ and $a$ are all different from $0$ in $K$; however, this implies $uka^{k-1}\neq 0$, a contradiction. Hence, $1+X^2$ does not belong to the closure of $G_1(K[X])$.

Suppose now $p=2$, and consider the open set $1+X+X^3+X^4K[X]$. Considering the monomial of degree 1, we see that the irreducible polynomial $g(X)$ must be in the form $X-a$. Suppose thus $1+X+X^3+X^4b(X)=u(X-a)^k$: then, confronting the coefficients of degree 1 we have that $k$ is odd, while confronting the coefficients of degree 2 we get that $(k-1)/2$ is even. The coefficient of degree 3 of $u(X-a)^k$ is thus $uk(k-2)\frac{k-1}{2}(-1)^{k-3}a^{k-3}=0$, contradicting the presence of $X^3$. Hence, $1+X+X^3$ does not belong to the closure of $G_1(K[X])$.

\ref{prop:Gn-pinfty:isolated} Let $a(X)\in G_n(K[X])$, and let $b(X)$ be a polynomial coprime with $a(X)$. Let $q$ be a prime power such that $\ins{F}_q$ contains all the coefficients of $a(X)$ and of $b(X)$. Then, $a(X)$ and $b(X)$ are coprime in $\ins{F}_q[X]$; since $\ins{F}_q[X]/b(X)\ins{F}_q[X]$ is finite, we can find $k>0$ such that $a(X)^k\equiv 1\bmod b(X)\ins{F}_q[X]$, and thus $a(X)^{k+1}\in G_n(K[X])\cap(a(X)+b(X)K[X])$ is different from $a(X)$. Hence, $a(X)$ is not isolated in $G_n(K[X])$.

\ref{prop:Gn-pinfty:closure} If $K$ is not algebraically closed then $G_1(K[X])$ is dense in $G(K[X])$ (see Proposition \ref{prop:dirichlet-aeff} below) and thus by Proposition \ref{prop:Gn>1}\ref{prop:Gn>1:dense} the $G_n(R)$ are actually dense.

Suppose that $K$ is algebraically closed: by part \ref{prop:Gn-pinfty:dense}, $G_0(R)$ is in the closure of $G_1(R)$.

Let now $a(X)\in G_m(K[X])$, with $m<n$, and let $b(X)$ be coprime with $a(X)$; let $r:=n-m$. Choose $r$ distinct elements, $t_1,\ldots,t_r$, such that $b(t_i)\neq 0$ and $a(t_i)\neq 0$ for all $i$; since $K[X]/b(X)K[X]$ is finite, we can find positive integers $k_1,\ldots,k_r$ such that $(X-t_i)^{k_i}\in 1+b(X)K[X]$ for all $i$. Let $A(X):=a(X)(X-t_1)^{k_1}\cdots(X-t_r)^{k_r}$: by construction, $A(X)\in G_n(K[X])$ and $A(X)\equiv a(X)\bmod b(X)K[X]$, that is, $A(X)\in a(X)+b(X)K[X]$. Hence, all neighborhood of $a(X)$ intersect $G_n(K[X])$, and thus $a(X)$ is in the closure of $G_n(K[X])$, as claimed.
\end{proof}

We now deal with the problem of distinguishing separably closed fields of different characteristics, that is, we want to prove that if $G(K[X])\simeq G(K'[X])$ then $K$ and $K'$ have the same characteristic, extending Proposition \ref{prop:caratt0-ptop}. Until the end of the section the section, $K$ will be a field of characteristic $p>0$ and $\overline{K}$ a (fixed) algebraic closure of $K$. We denote by $v_p$ the $p$-adic valuation on the positive integers.

\begin{defin}
Let $r(X)\in K[X]$ be an irreducible polynomial. A \emph{$r(X)$-sequence} is a sequence $E\subset \pow{r(X)}$. If $r(X)\notin(X-1)$, we say that $E$ is \emph{basic} if $E$ converges to $1$ in the $(X-1)$-topology.
\end{defin}

Since $E\subseteq\pow{q(X)}$, we can always write the elements of a $r(X)$-sequence $E:=\{s_n(X)\}_{n\inN}$ as  $s_n(X):=u_nr(X)^{e_n}$, for some $u_n\in K^\nz$ and some positive integers $e_n$.

\begin{lemma}\label{lemma:binomial}
Let $p$ be a prime number and $e,z$ be natural numbers such that $p^z<e$. If $p$ divides the binomial coefficient $\binom{e}{p^t}$ for all $1\leq t\leq z$, then $v_p(e)\geq z+1$.
\end{lemma}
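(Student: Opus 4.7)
The natural tool is Lucas's theorem, or equivalently Kummer's theorem on the number of carries in base-$p$ addition. I would write $e$ in base $p$ as $e=\sum_{i\ge 0}a_ip^i$ with $0\le a_i<p$, and apply Lucas's theorem to $k=p^t$, whose base-$p$ expansion has a single $1$ in position $t$ and zeros elsewhere. This immediately yields
\begin{equation*}
\binom{e}{p^t}\equiv\binom{a_t}{1}\prod_{i\ne t}\binom{a_i}{0}\equiv a_t\pmod p,
\end{equation*}
so that the divisibility $p\mid\binom{e}{p^t}$ is equivalent to the vanishing of the $t$-th digit $a_t$ of $e$.

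Applying this equivalence to each $t=1,\ldots,z$ forces $a_1=a_2=\cdots=a_z=0$. Combined with the vanishing of $a_0$, this writes $e=p^{z+1}m$ for some nonnegative integer $m$, i.e.\ $v_p(e)\ge z+1$. The hypothesis $p^z<e$ enters here only to guarantee that the base-$p$ expansion of $e$ extends beyond position $z$ (otherwise $e=a_0<p\le p^z$, contradicting $p^z<e$ once $z\ge 1$), so that the conclusion is not merely comparing digits that do not exist.

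The main and essentially only ingredient is Lucas's theorem itself; the remaining work is a bookkeeping exercise on base-$p$ digits. The step that I expect to require the most attention is extracting the vanishing of $a_0$: this does not follow from the stated range $1\le t\le z$ taken literally, so I would expect it either to be implicit from the way the lemma is invoked in the subsequent argument (where $e$ is the exponent appearing in a factorisation $ur^e$ and additional divisibility by $p$ can be read off from context), or to be absorbed by extending the observation to $t=0$, where $\binom{e}{1}=e$. Once that point is settled, the bound $v_p(e)\ge z+1$ is immediate from the base-$p$ picture.
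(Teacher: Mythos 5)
Your proof is correct, but it takes a genuinely different route from the paper's. The paper argues by induction on $z$: having established $v_p(e)\geq z$ from the first $z$ divisibilities, it computes directly that $v_p(k)<v_p(e)$ for $0<k<p^z$, hence $v_p(e-k)=v_p(k)$, so the $p$-adic valuation of $\binom{e}{p^z}=\frac{e(e-1)\cdots(e-p^z+1)}{p^z(p^z-1)\cdots 1}$ collapses to $v_p(e)-z$, which must be positive. Your appeal to Lucas's theorem replaces this inductive valuation bookkeeping with a single digit-by-digit reading: $\binom{e}{p^t}\equiv a_t\pmod p$, so the hypotheses kill the digits $a_1,\ldots,a_z$ at one stroke. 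Your version is shorter and arguably more transparent; the paper's is self-contained (it does not invoke Lucas and in effect reproves the one special case of it that is needed). Your worry about the digit $a_0$ is well founded and you resolved it correctly: as literally stated, with the range $1\leq t\leq z$, the lemma is false (for $p=2$, $z=1$, $e=5$ one has $p^z<e$ and $2\mid\binom{5}{2}=10$, yet $v_2(5)=0$), so the range should read $0\leq t\leq z$. The paper's own proof confirms this, since its base case $z=0$ uses $p\mid\binom{e}{p^0}=e$, and the application in Proposition \ref{prop:convergenza-Xseq} supplies $\binom{e_n}{r}=0$ for all $1\leq r\leq p^z$, which includes $r=1=p^0$; so the missing case is exactly the one you identified as being "absorbed by extending the observation to $t=0$."
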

\begin{proof}
Fixed $p$ and $e$, we proceed by induction on $z$. If $z=0$, then we know that $p$ divides $\binom{e}{p^0}=\binom{e}{1}=e$, and the claim holds.

Suppose we have proved the claim up to $z-1$. Then, $p^z|e$ and $p$ divides
\begin{equation*}
\binom{e}{p^z}=\frac{e(e-1)\cdots(e-p^z+1)}{p^z(p^z-1)\cdots 2\cdot 1}.
\end{equation*}
For all $0<k<p^z$, we have $v_p(k)<v_p(e)$ and thus $v_p(e-k)=\min\{v_p(e),v_p(k)\}=v_p(k)$; hence, the $p$-valuation of the product $(e-1)\cdots(e-p^z+1)$ is equal to the $p$-valuation of $(p^z-1)!$. Thus,
\begin{equation*}
0<v_p\left(\binom{e}{p^z}\right)=v_p\left(\frac{e}{p^z}\right)=v_p(e)-v_p(p^z)=v_p(e)-z.
\end{equation*}
It follows that $v_p(e)>z$, i.e., $v_p(e)\geq z+1$. By induction, the claim is proved.
\end{proof}

\begin{prop}\label{prop:convergenza-Xseq}
Let $r(X),q(X)$ be coprime irreducible polynomials, and let $E=\{s_n(X):=u_nr(X)^{e_n}\}_{n\inN}$ be a $r(X)$-sequence. Let $s\in K^\nz$. Then, $E$ converges to $s$ in the $(q(X))$-topology if and only if $v_p(e_n)\to\infty$ and $s_n(\lambda)$ is definitively equal to $s$ for every root $\lambda$ of $q(X)$ in $\overline{K}$.
\end{prop}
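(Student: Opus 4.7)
The plan is to reduce convergence in the $(q(X))$-topology to an order-of-vanishing condition at each root of $q$ in $\overline{K}$, and then to compute that order explicitly using the Frobenius endomorphism in characteristic $p$.

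First I would note that $\{s_n\}$ converges to $s$ in the $(q(X))$-topology exactly when, for every $N$, $q(X)^N$ eventually divides $s_n(X)-s$ in $K[X]$; since $q(X)^N\in K[X]$, this is insensitive to base change, so it is the same as divisibility in $\overline{K}[X]$. Factoring $q(X)=c\prod_{i=1}^{\ell}(X-\lambda_i)^{p^m}$ in $\overline{K}[X]$ (the common multiplicity $p^m$ arising because $q$ is irreducible over $K$), convergence becomes equivalent to $\mathrm{ord}_{\lambda}(s_n-s)\to\infty$ for each distinct root $\lambda$ of $q$.

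Next, fix such a root $\lambda$ and work in $\overline{K}[[Y]]$ with $Y:=X-\lambda$. Since $r$ and $q$ are coprime, $r(\lambda)\neq 0$, so I can write $r(X)=r(\lambda)\bigl(1+Y^\mu\psi(Y)\bigr)$ with $\mu:=\mathrm{ord}_\lambda(r-r(\lambda))\geq 1$ and $\psi\in\overline{K}[[Y]]$, $\psi(0)\neq 0$. Decomposing $e_n=p^{v_n}f_n$ with $\gcd(f_n,p)=1$ and using the characteristic-$p$ Frobenius identity $\bigl(\sum a_kY^k\bigr)^{p^v}=\sum a_k^{p^v}Y^{kp^v}$ on the inner polynomial, I compute
\begin{equation*}
r(X)^{e_n}=r(\lambda)^{e_n}\Bigl[\bigl(1+Y^\mu\psi(Y)\bigr)^{f_n}\Bigr]^{p^{v_n}}=r(\lambda)^{e_n}\Bigl(1+\bigl(f_n\psi(0)\bigr)^{p^{v_n}}Y^{\mu p^{v_n}}+(\text{order}>\mu p^{v_n})\Bigr).
\end{equation*}
Subtracting $s$ yields
\begin{equation*}
s_n(X)-s=\bigl(s_n(\lambda)-s\bigr)+s_n(\lambda)\bigl(f_n\psi(0)\bigr)^{p^{v_n}}Y^{\mu p^{v_n}}+(\text{higher order in }Y),
\end{equation*}
whose $Y$-coefficient at exponent $\mu p^{v_n}$ is nonzero because $s_n(\lambda)=u_nr(\lambda)^{e_n}\neq 0$, $\psi(0)\neq 0$, and $f_n\not\equiv 0\pmod p$.

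Consequently $\mathrm{ord}_\lambda(s_n-s)=0$ when $s_n(\lambda)\neq s$ and $\mathrm{ord}_\lambda(s_n-s)=\mu p^{v_n}$ when $s_n(\lambda)=s$; in particular $\mathrm{ord}_\lambda(s_n-s)\to\infty$ if and only if $s_n(\lambda)$ is eventually equal to $s$ \emph{and} $v_p(e_n)\to\infty$. Taking the conjunction over the finitely many roots of $q$ delivers both directions of the equivalence at once. The main obstacle is coping uniformly with the possible inseparability of both $q$ and $r$: $q$'s inseparability explains the common factor $p^m$ in the root multiplicities, while $r$'s inseparability at $\lambda$ is absorbed into the single parameter $\mu\geq 1$, so that the Frobenius identity handles everything in one computation.
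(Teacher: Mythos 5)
Your proof is correct, and it takes a genuinely different route from the paper's on the harder direction. The paper first reduces to $K$ algebraically closed with $r=X-t$, $q=X-\lambda$; it proves sufficiency by writing $e_n=p^ke'_n$ and using Frobenius (as you do), but for necessity it expands $s_n(X)$ in powers of $X-\lambda$ via the binomial theorem, deduces that $\binom{e_n}{i}$ vanishes mod $p$ for $1\leq i\leq p^z$, and then invokes a separate arithmetic lemma (Lemma \ref{lemma:binomial}, proved by induction) to convert vanishing of these binomial coefficients into $v_p(e_n)\geq z+1$. Your decomposition $e_n=p^{v_n}f_n$ with $p\nmid f_n$ yields the \emph{exact} order of vanishing $\mathrm{ord}_\lambda(s_n-s)=\mu\, p^{v_n}$ (when $s_n(\lambda)=s$), because the coefficient $s_n(\lambda)\bigl(f_n\psi(0)\bigr)^{p^{v_n}}$ is visibly nonzero; this gives both implications simultaneously and makes Lemma \ref{lemma:binomial} unnecessary. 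Your treatment is also more uniform in another respect: the paper's final reduction step ("apply the previous reasoning over $\overline{K}$") tacitly assumes $r$ stays in the linear form analyzed before, whereas your parameter $\mu=\mathrm{ord}_\lambda(r-r(\lambda))$ absorbs whatever $r$ looks like locally at $\lambda$, and the multiplicity $p^m$ of the roots of an inseparable $q$ is handled explicitly. The only cost is the (harmless) use of the formal local ring $\overline{K}[[Y]]$ in place of purely polynomial manipulations.
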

\begin{proof}
Suppose first that $K=\overline{K}$ is algebraically closed. Then, we can write $r(X):=X-t$, $q(X):=X-\lambda$ for some $t,\lambda\in K$. Let $Q:=(X-\lambda)$. 

Suppose the two conditions hold, and let $k$ be any integer. Then, there is an $N$ such that $v_p(e_n)\geq k$ and $s_n(\lambda)=s$ for every $n\geq N$. Thus,
\begin{align*}
s_n(\lambda) & =u_n(X-t)^{e_n}=u_n(X-\lambda+\lambda-t)^{p^ke'_n}=\\
& =u_n((X-\lambda)^{p^k}+(\lambda-t)^{p^k})^{e'_n}.
\end{align*}
Untying the binomial, we obtain $u_n((\lambda-t)^{p^k})^{e'_n}=u_n(\lambda-t)^{e_n}=s_n(\lambda)=s$, while the other monomials are all divisible by $(X-\lambda)^{p^k}$. Therefore, $s_n(\lambda)\in s+Q^{p^k}$ for all $n\geq N$. Since $\{s+Q^{p^k}\}$ is a local basis of neighborhoods of $s$ in the $Q$-topology, $E$ tends to $s$.

Conversely, if $E$ converges to $s$ in the $Q$-topology then $s_n(X)\in s+Q$ definitively, i.e., $s_n(X)-s\in Q$, or equivalently $q(X)$ divides $s_n(X)-s$. Hence, $s_n(\lambda)-s=0$ and $s_n(\lambda)=s$ definitively. We now have
\begin{equation*}
s_n(X)=u_n(X-\lambda+\lambda-t)^{e_n}=u_n\sum_i \binom{e_n}{i}(\lambda-t)^{n-i}(X-\lambda)^i.
\end{equation*}
Since $E$ converges to $s$, the polynomial $s_n(X)-s$ must belong (definitively) to $Q^k$ for every $k>0$, that is, the coefficients of degree $<k$ in $X-\lambda$ must be equal to $0$. Choose $k=p^z+1$. Then, for large $n$, we have that $\binom{e_n}{r}=0$ for all $1\leq r\leq p^z$; by Lemma \ref{lemma:binomial}, we have $v_p(e_n)\geq z+1$. Since $z$ was arbitrary, $v_p(e_n)$ tends to infinity.

If now $K$ is not algebraically closed, it is enough to note that the convergence of $E$ in the $(q(X))$-topology is equivalent to the convergence in $\overline{K}[X]$ of $E$ in the $(X-\lambda)$-topology for every root $\lambda$ of $q(X)$, and then apply the previous reasoning.
\end{proof}

Let now $E$ be a basic $r(X)$-sequence. We denote by $\limiti_K(E)$ the set of maximal ideals $Q$ of $K[X]$, different from $(r(X))$, such that $E$ converges to $1$ in the $Q$-topology; furthermore, we denote by $\limiti_K$ the set of natural numbers $n$ such that there is an irreducible polynomial $r(X)$ and a $(r(X))$-sequence $E$ with $|\limiti_K(E)|=n$. These sets are determined by the Golomb topology, in the following sense.
\begin{prop}\label{prop:limitiKE}
Preserve the notation above.
\begin{enumerate}[(a)]
\item\label{prop:limitiKE:hlim} Let $h:G(K[X])\longrightarrow G(K'[X])$ is a homeomorphism such that $h(1)=1$, and let $r'(X)$ be an irreducible polynomial such that $r'(X)$ generates $h((r(X))$. Then, $h(E)$ is a $r'(X)$-sequence and $h(\limiti_K(E))=\limiti_{K'}(h(E))$.
\item\label{prop:limitiKE:Omef} If $G(K[X])$ and $G(K'[X])$ are homeomorphic, then $\limiti_K=\limiti_{K'}$.
\end{enumerate}
\end{prop}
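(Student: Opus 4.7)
My plan for part \ref{prop:limitiKE:hlim} is to combine two results recalled in the preliminaries: that a homeomorphism $h$ sends $G_{\{P\}}$ to $G_{\{P'\}}$, where $P':=h(P^\nz)\cup\{0\}$; and that $h$ restricts to a homeomorphism between $K[X]\setminus P$ endowed with the $P$-topology and $K'[X]\setminus P'$ endowed with the $P'$-topology. Since $\pow{r(X)}=G_{\{(r(X))\}}(K[X])$, the first result gives $h(\pow{r(X)})=\pow{r'(X)}$, so $h(E)\subseteq\pow{r'(X)}$ and $h(E)$ is genuinely an $r'(X)$-sequence. For the set-theoretic equality, given $Q\in\limiti_K(E)$ (so $Q\neq(r(X))$ and $E\to 1$ in the $Q$-topology), I would set $Q':=h(Q^\nz)\cup\{0\}$ and use the topological restriction together with the hypothesis $h(1)=1$ to transport the convergence, obtaining $h(E)\to 1$ in the $Q'$-topology. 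Because $h$ induces a bijection on maximal ideals (as $K[X]$ is a PID and so has trivial, hence torsion, class group) that sends $(r(X))$ to $(r'(X))$, the exclusion $Q\neq(r(X))$ translates to $Q'\neq(r'(X))$, so $Q'\in\limiti_{K'}(h(E))$. The opposite inclusion follows by running the same reasoning with $h^{-1}$, which is also a homeomorphism that fixes $1$.

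For part \ref{prop:limitiKE:Omef}, the idea is to reduce to the normalized case $h(1)=1$ and then apply part \ref{prop:limitiKE:hlim}. Given an arbitrary homeomorphism $h\colon G(K[X])\to G(K'[X])$, the element $u:=h(1)$ lies in $U(K'[X])$ because $h$ sends units to units; therefore $\tilde h(x):=u^{-1}h(x)$ is still a homeomorphism (multiplication by the unit $u^{-1}$ being a self-homeomorphism of $G(K'[X])$) and now satisfies $\tilde h(1)=1$. For each $r(X)$-sequence $E$ with $|\limiti_K(E)|=n$, part \ref{prop:limitiKE:hlim} applied to $\tilde h$ produces an $r'(X)$-sequence $\tilde h(E)$ with $|\limiti_{K'}(\tilde h(E))|=n$, witnessing $n\in\limiti_{K'}$; the reverse containment follows from the same argument applied to $\tilde h^{-1}$.

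I do not anticipate a real obstacle, as both statements reduce to invariance properties already made available by the preliminaries. The only point that requires some bookkeeping is verifying that the bijection on maximal ideals induced by $h$ correctly matches the distinguished prime $(r(X))$ with $(r'(X))$, so that the exclusion appearing in the definition of $\limiti_K(E)$ translates cleanly; this is immediate from the defining property of $r'(X)$.
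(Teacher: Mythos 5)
Your proof is correct and follows essentially the same route as the paper, which simply cites the fact that a homeomorphism of Golomb spaces restricts to a homeomorphism between the $Q$-topology and the $Q'$-topology and then derives (b) "directly" from (a). Your write-up merely fills in details the paper leaves implicit — notably the normalization $\tilde h=h(1)^{-1}h$ needed to invoke part (a) in the proof of part (b), and the matching of $(r(X))$ with $(r'(X))$ under the induced bijection on maximal ideals.
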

\begin{proof}
\ref{prop:limitiKE:hlim} follows from the fact that a homeomorphism of Golomb spaces is also a homeomorphism between the $Q$-topology and the $Q'$-topology (where $Q':=h(Q^\nz)\cup\{0\}$). \ref{prop:limitiKE:Omef} follows directly from \ref{prop:limitiKE:hlim}.
\end{proof}

To study $\limiti_K$, we introduce another set associated to a $r(X)$-sequence $E$: we denote by $\ell(E)$ the subset of $\overline{K}$ formed by the roots of the irreducible polynomials that generate a prime ideal of $\mathcal{L}(E)$, that is, $\ell(E)$ is the set of all $\lambda$ such that $E$ converges to $1$ in the $(X-\lambda)$-topology of $\overline{K}[X]$. Note that $\ell(E)$ does not depend on the field $K$, i.e., it remains the same also when considering $E$ in $K'[X]$, where $K'$ is an extension of $K$.
\begin{prop}\label{prop:ellE->sgr}
Let $E$ be a basic $X$-sequence. If $1\in\ell(E)$, then $\ell(E)$ is a torsion multiplicative subgroup of $\overline{K}^\nz$.
\end{prop}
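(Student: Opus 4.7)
The plan is to use Proposition \ref{prop:convergenza-Xseq} to translate the definition of $\ell(E)$ into a concrete algebraic condition on the exponents $e_n$ and on $\lambda$, after which both the subgroup and torsion properties fall out immediately. First I would write the elements of the $X$-sequence as $s_n(X) = u_n X^{e_n}$. Applying Proposition \ref{prop:convergenza-Xseq} at the prime $(X-1)$, the hypothesis $1\in\ell(E)$ (which in fact is the same as $E$ being basic) yields that $v_p(e_n)\to\infty$ and that $s_n(1)=u_n$ is eventually equal to $1$. Since discarding finitely many initial terms does not affect any convergence statement, I may assume $u_n=1$ for all $n$, so $s_n(X)=X^{e_n}$.

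With this normalization, Proposition \ref{prop:convergenza-Xseq} applied in $\overline{K}[X]$ at the prime $(X-\lambda)$ says that $\lambda\in\ell(E)$ if and only if $\lambda^{e_n}=1$ for all sufficiently large $n$ (the condition $v_p(e_n)\to\infty$ is already granted and does not depend on $\lambda$). In particular $\lambda=0$ cannot belong to $\ell(E)$, so $\ell(E)\subseteq\overline{K}^\nz$. The subgroup axioms are then immediate: $1\in\ell(E)$ by hypothesis; if $\lambda^{e_n}=1$ for $n\ge N_1$ and $\mu^{e_n}=1$ for $n\ge N_2$, then $(\lambda\mu)^{e_n}=\lambda^{e_n}\mu^{e_n}=1$ and $(\lambda^{-1})^{e_n}=(\lambda^{e_n})^{-1}=1$ for $n\ge\max(N_1,N_2)$, giving closure under multiplication and inversion. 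Torsion is automatic: any $\lambda\in\ell(E)$ satisfies $\lambda^{e_n}=1$ for some $n$, so its order divides $e_n$ and is finite.

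The statement is essentially a repackaging of Proposition \ref{prop:convergenza-Xseq}, so there is no real obstacle; the only point that deserves a line of justification is the reduction to $u_n=1$, which requires observing that finite truncation of the sequence preserves both the set $\ell(E)$ and the value of $v_p(e_n)$ in the limit.
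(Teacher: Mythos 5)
Your argument is correct and follows exactly the paper's own route: use the basic/$1\in\ell(E)$ hypothesis together with Proposition \ref{prop:convergenza-Xseq} to normalize $u_n=1$ and characterize $\ell(E)$ as $\{\lambda : \lambda^{e_n}=1 \text{ definitively}\}$, from which the subgroup and torsion properties are immediate. You merely spell out the verification that the paper labels ``easy to see,'' so there is nothing to add.
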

\begin{proof}
Let $E=\{s_n(X):=u_nX^{e_n}\}$. If $1\in\ell(E)$, then $1=s_n(1)$ definitively, that is, $1=u_n1^{e_n}=u_n$ definitively; without loss of generality we can suppose that $u_n=1$ for all $n$. By Proposition \ref{prop:convergenza-Xseq} (and noting that the condition $v_p(e_n)\to\infty$ does not depend on $\lambda$) it follows that $\ell(E)$ is the set of all $\lambda$ such that $\lambda^{e_n}=1$ definitively, and it is easy to see that this is a subgroup of $K^\nz$ whose elements are all torsion.
\end{proof}

The previous proposition also has a converse.
\begin{prop}\label{prop:sgr->ellE}
Let $H$ be a torsion multiplicative subgroup of $\overline{K}^\nz$. Then, there is a basic $X$-sequence $E$ with $\ell(E)=H$.
\end{prop}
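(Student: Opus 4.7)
The plan is to construct $E$ directly in the form $s_n(X) = X^{e_n}$, with $e_n$ chosen so that the eventual divisors of $e_n$ (away from $p$) trace out exactly the orders of elements of $H$. The starting observation is that in characteristic $p$ every torsion element of $\overline{K}^\nz$ has order coprime to $p$, and since any finite subgroup of $\overline{K}^\nz$ is cyclic, for any $\lambda \in H$ of order $d$ we have $\langle\lambda\rangle = \mu_d \subseteq H$. Consequently $H = \bigcup_{d \in T}\mu_d$, where
\[
T := \{d \in \insN \mid d \geq 1,\ \mu_d \subseteq H\}
\]
is closed under divisors and under least common multiples, contains $1$, and has every element coprime to $p$.

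Next I would exploit that $T$ is countable: enumerate $T = \{t_1,t_2,\dots\}$ and set $d_n := \mathrm{lcm}(t_1,\dots,t_n)$. Closure under lcm gives $d_n \in T$, the sequence is divisibility-increasing, and it is cofinal in $T$ in the sense that every $t \in T$ divides $d_n$ for all sufficiently large $n$. I would then define
\[
e_n := d_n\, p^n,\qquad s_n(X) := X^{e_n},\qquad E := \{s_n\}_{n\geq 1} \subseteq \pow{X}.
\]

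The verification is an essentially direct application of Proposition \ref{prop:convergenza-Xseq}. Since $v_p(e_n) = n \to \infty$ and $s_n(1) = 1$ for all $n$, the proposition immediately gives that $E$ converges to $1$ in the $(X-1)$-topology, so $E$ is basic. For arbitrary $\lambda \in \overline{K}^\nz$, applying the same proposition in $\overline{K}[X]$ with $q(X) = X - \lambda$ shows that $\lambda \in \ell(E)$ iff $\lambda^{e_n} = 1$ eventually; non-torsion $\lambda$ are automatically excluded, while for $\lambda$ of order $d$ (necessarily coprime to $p$) the condition becomes $d \mid d_n p^n$, equivalently $d \mid d_n$, which by cofinality of $(d_n)$ in $T$ and the divisor-closure of $T$ is equivalent to $d \in T$, i.e.\ $\lambda \in H$. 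Hence $\ell(E) = H$.

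The only real obstacle is the first step: recognizing the structural description $H = \mu_T$ with $T$ closed under lcm and divisors, which uses that finite subgroups of $\overline{K}^\nz$ are cyclic and that characteristic $p$ kills $p$-torsion in the multiplicative group. Once that is in place the construction is essentially forced: $e_n$ must have $p$-free part running through a cofinal subset of $T$ (to get $\ell(E) = H$) and $p$-adic valuation tending to infinity (to apply Proposition \ref{prop:convergenza-Xseq}), and the product $d_n p^n$ is the simplest way to achieve both simultaneously.
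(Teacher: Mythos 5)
Your construction is correct and is essentially the paper's own: the paper takes $s_n(X)=X^{f_np^n}$ with $f_n$ the order of the (cyclic) subgroup generated by the first $n$ elements of an enumeration of $H$, which is exactly your divisibility-cofinal sequence $d_n$ of admissible orders, and both verifications reduce to Proposition \ref{prop:convergenza-Xseq} together with cyclicity of finite subgroups of $\overline{K}^\nz$ and coprimality of torsion orders to $p$. Your packaging via the divisor- and lcm-closed set $T$ is only a cosmetic variant, so nothing further is needed.
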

\begin{proof}
If $H$ is finite, let $f_n:=|H|$ for all $n$. If $H$ is infinite, let $h_1,h_2,\ldots$ be an enumeration of $H$ (note that, since $H$ is torsion, it is contained in the algebraic closure of $\ins{F}_p$ and thus it is countable), and let $f_n$ be the order of the subgroup generated by $h_1,\ldots,h_n$. We claim that the sequence $E=\{s_n(X):=X^{f_np^n}\}_{n\inN}$ satisfies the condition: indeed, $v_p(e_n)=n$ for all $n$, and thus the $p$-adic valuation of the exponents goes to infinity. Furthermore, if $h\in H$ then $s_n(h)=h^{f_np^n}=(h^{f_n})^{p^n}=1^{p^n}=1$ for all sufficiently large $n$. Thus $h\in\ell(E)$ and $H\subseteq\ell(E)$.

On the other hand, suppose $h\notin H$. If its order is infinite, then $h^{f_np^n}\neq 1$ for every $n$ and $h\notin\ell(E)$ by Proposition \ref{prop:convergenza-Xseq}. If the order of $h$ is finite, we claim that it does not divide any $f_n$. Indeed, every finite subgroup of $K^\nz$ is cyclic, and thus if the order of $h$ divides $f_n$ then $h$ would belong to $\langle h_1,\ldots,h_n\rangle$ and thus to $H$, a contradiction. Since no element of $K^\nz$ has order $p$ (or a multiple of $p$), it follows that the order of $H$ does not divide $f_np^n$ for every $n$; thus, again $h^{f_np^n}\neq 1$ and so $h\notin\ell(E)$. The claim is proved.
\end{proof}

In general, we only know that $|\ell(E)|\leq|\limiti_K(E)|$; however, when $K$ is algebraically closed (or even just separably closed) there is a bijective correspondence between $K$ and $\Max(K[X])$, and thus the two sets have the same cardinality. We now can use the previous propositions to determine $\limiti_K$.
\begin{lemma}\label{lemma:cardsgr}
Let $K$ be an algebraically closed field of characteristic $p>0$, and let $n$ be a positive integer. Then, there is a subgroup of $K^\nz$ of cardinality $n$ if and only if $n$ is coprime with $p$.
\end{lemma}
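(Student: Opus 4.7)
The plan is to exploit the classical fact that any finite subgroup of the multiplicative group of a field is cyclic, so a subgroup of $K^\nz$ of cardinality $n$ is precisely the set of solutions of $x^n=1$ in $K$, generated by a primitive $n$-th root of unity. Hence the question reduces to deciding when $K$ contains a primitive $n$-th root of unity.

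First I would handle the forward direction. Suppose a subgroup $H\leq K^\nz$ of order $n$ exists. If $p\mid n$, then by Cauchy's theorem $H$ would contain an element $h$ of order $p$; but $h^p=1$ means $h$ is a root of $X^p-1=(X-1)^p$, forcing $h=1$, which contradicts $\mathrm{ord}(h)=p$. Thus $p\nmid n$, i.e., $\gcd(n,p)=1$.

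For the converse, assume $\gcd(n,p)=1$. The polynomial $X^n-1\in K[X]$ has derivative $nX^{n-1}$, which is nonzero and coprime with $X^n-1$ (their only possible common root would be $0$, but $0$ is not a root of $X^n-1$); hence $X^n-1$ is separable. Since $K$ is algebraically closed, $X^n-1$ splits into $n$ distinct linear factors, and its roots form a subgroup of $K^\nz$ of cardinality exactly $n$.

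The argument is essentially routine once one recalls that finite multiplicative subgroups of fields are cyclic and that separability of $X^n-1$ is equivalent to $\gcd(n,\car K)=1$; there is no real obstacle, the only thing to be careful about is invoking algebraic closedness (not just separable closedness) to guarantee that the $n$ roots actually lie in $K$, although separable closedness would suffice here since $X^n-1$ is separable.
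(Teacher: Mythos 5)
Your proof is correct. The forward direction is essentially the paper's: both reduce to the observation that a subgroup of order divisible by $p$ would contain an element of order $p$, impossible since $X^p-1=(X-1)^p$ in characteristic $p$ (the paper phrases this via a subgroup of cardinality $p$ inside the given subgroup, you via Cauchy's theorem; same content). For the converse you take a slightly different route: you observe that $\gcd(n,p)=1$ makes $X^n-1$ separable, so its $n$ distinct roots in the algebraically closed field $K$ form the desired subgroup. The paper instead picks $k$ with $n\mid p^k-1$ and uses that $K$ contains $\ins{F}_{p^k}$, whose cyclic multiplicative group has a subgroup of order $n$. Your argument is marginally more self-contained (no appeal to the multiplicative order of $p$ modulo $n$ or to the structure of finite fields) and, as you note, needs only separable closedness; the paper's version sits more naturally alongside the surrounding discussion of subfields $\ins{F}_{p^k}\subseteq K$. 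Either way the lemma is established; your opening remark that a subgroup of order $n$ must be the full root set of $X^n-1$ (by cyclicity of finite multiplicative subgroups) is true but not actually needed for either direction of your argument.
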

\begin{proof}
If $n$ is coprime with $p$, then there is a $k$ such that $n$ divides $p^k-1$; therefore, the multiplicative group  of $\ins{F}_{p^k}$ contains a subgroup of cardinality $n$. Since $K$ is algebraically closed, it contains $\ins{F}_{p^k}$, and thus $K^\nz$ contains a subgroup of cardinality $n$.

If $n$ is not coprime with $p$, then $p$ divides $n$. Thus, if $K^\nz$ contains a subgroup of cardinality $n$, it contains also a subgroup of cardinality $p$. However, no element of $K^\nz$ has order $p$.
\end{proof}

\begin{prop}\label{prop:limitiKp}
Let $K$ be a separably field of characteristic $p>0$. Then, $\limiti_K=\insN\setminus p\insN^+$.
\end{prop}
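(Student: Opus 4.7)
The plan is to convert the question about maximal ideals of $K[X]$ into a question about torsion subgroups of $\overline{K}^\nz$, exploiting the fact that $K$ separably closed yields a bijection $\overline{K}\leftrightarrow\Max(K[X])$ (every $\lambda\in\overline{K}$ is purely inseparable over $K$, and its unique minimal polynomial generates the corresponding maximal ideal). For any basic $r(X)$-sequence $E$, Proposition \ref{prop:convergenza-Xseq} shows that a prime $(q(X))\neq (r(X))$ belongs to $\limiti_K(E)$ if and only if the unique root of $q(X)$ in $\overline{K}$ belongs to $\ell(E)$; since $s_n$ vanishes on the root of $r(X)$, that root is excluded from $\ell(E)$ automatically. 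Hence $|\limiti_K(E)|=|\ell(E)|$.

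To evaluate $|\ell(E)|$, write $E=\{u_nr(X)^{e_n}\}$. Basicness forces $u_nr(1)^{e_n}=1$ eventually (note $r(1)\neq 0$, since $r\notin (X-1)$) and $v_p(e_n)\to\infty$, so eventually $s_n(X)=R(X)^{e_n}$ with $R(X):=r(X)/r(1)$. Proposition \ref{prop:convergenza-Xseq} then gives $\ell(E)=R^{-1}(H)$, where
\[
H:=\{h\in\overline{K}^\nz\mid h^{e_n}=1\text{ eventually}\}
\]
is readily checked to be a torsion subgroup of $\overline{K}^\nz$ (closure under product and inverse is immediate). Because $K$ is separably closed, $r(X)$ must, up to units, take the form $X^{p^{n_0}}-a$ for some $n_0\geq 0$, so $R$ is a composition of the Frobenius $X\mapsto X^{p^{n_0}}$ with affine maps. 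The Frobenius is bijective on the algebraically closed field $\overline{K}$, hence so is $R$, and therefore $|\ell(E)|=|R^{-1}(H)|=|H|$.

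The problem now reduces to a group-theoretic question answered by Lemma \ref{lemma:cardsgr}. For $\limiti_K\subseteq\insN\setminus p\insN^+$, that lemma applied to $\overline{K}$ forbids $|H|\in p\insN^+$, so $|\limiti_K(E)|=|H|$ cannot be a positive multiple of $p$. Conversely, given $n$ coprime with $p$, Lemma \ref{lemma:cardsgr} produces a subgroup $H\subseteq\overline{K}^\nz$ of order $n$, and Proposition \ref{prop:sgr->ellE} yields a basic $X$-sequence $E$ with $\ell(E)=H$, realizing $n=|\limiti_K(E)|\in\limiti_K$.

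The main subtlety is handling non-linear irreducible $r(X)$ (which arise when $K$ is separably closed but not perfect): the argument requires that $R$ remain a bijection of $\overline{K}$, so that $|\ell(E)|$ depends only on the subgroup $H$ and not on the particular $r$ chosen. This bijectivity is precisely what the surjectivity of the Frobenius on the algebraically closed field $\overline{K}$ provides, so the conclusion is uniform across all admissible $r(X)$.
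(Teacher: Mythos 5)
Your core reduction is correct and in fact streamlines the paper's argument. The paper first treats the algebraically closed case, normalizing an arbitrary $(X-\lambda)$-sequence to a basic $X$-sequence by composing with an affine automorphism of $K[X]$ and then invoking Proposition \ref{prop:ellE->sgr}; the merely separably closed case is handled afterwards by identifying $\Max(K[X])$ with $\Max(\overline{K}[X])$. You instead work with a general irreducible $r(X)=X^{p^{n_0}}-a$ from the start, compute $\ell(E)=R^{-1}(H)$ directly, and use bijectivity of $R$ on $\overline{K}$ (an iterate of Frobenius composed with affine maps) to conclude $|\ell(E)|=|H|$ with $H$ a torsion subgroup; Lemma \ref{lemma:cardsgr} then finishes both inclusions. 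This shows cleanly that no change of variables is needed and that the count is uniform in $r$.

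There is, however, a genuine (if small) gap: the statement includes $0\in\limiti_K$, and $\limiti_K$ ranges over \emph{all} $r(X)$-sequences, not only basic ones. Your converse direction only realizes $n\geq 1$ coprime to $p$ --- indeed a basic sequence always has $(X-1)\in\limiti_K(E)$, so $|\limiti_K(E)|\geq 1$ for such sequences --- and to get $0$ you need a non-convergent sequence such as $\{X^k\}_{k\inN}$, whose exponents have bounded $p$-adic valuation, so that $\limiti_K(E)=\emptyset$ by Proposition \ref{prop:convergenza-Xseq}. Symmetrically, your forward inclusion uses basicness to normalize $u_nr(1)^{e_n}=1$, so as written it does not cover non-basic sequences $E$ with $\limiti_K(E)\neq\emptyset$. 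The fix is immediate (pick any $Q_0\in\limiti_K(E)$ with root $\mu$, note that convergence at $Q_0$ forces $v_p(e_n)\to\infty$ and $u_nr(\mu)^{e_n}=1$ eventually, and run your argument with $R_\mu:=r/r(\mu)$ in place of $R$, obtaining $\ell(E)=R_\mu^{-1}(H)$), but it needs to be said.
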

\begin{proof}
Suppose first that $K$ is algebraically closed. If $n>0$ is coprime with $p$, then by Lemma \ref{lemma:cardsgr} there is a subgroup of $K^\nz$ of cardinality $n$, and thus by Proposition \ref{prop:sgr->ellE} there is an $X$-sequence $E$ with $|\limiti_K(E)|=n$. Furthermore, the sequence $\{X^k\}_{k\inN}$ does not converge in any $P$-topology (as $v_p(k)$ does not tend to infinity) and thus also $0\in\limiti_K$. Hence, $\insN\setminus p\insN^+\subseteq\limiti_K$.

Conversely, let $E$ be a $(X-\lambda)$-sequence with $(X-\mu)\in\limiti_K(E)$. Let $\psi$ be the map
\begin{equation*}
\begin{aligned}
\psi\colon G(K[X])& \longrightarrow G(K'[X]),\\
f(X) & \longmapsto f((\lambda+\mu)X+\lambda).
\end{aligned}
\end{equation*}
Then, $\psi$ is a ring automorphism of $K[X]$, and thus it is a self-homeomorphism of $G(K[X])$. Furthermore,
\begin{equation*}
\psi(X-\lambda)=(\lambda+\mu)X+\lambda-\lambda=(\lambda+\mu)X
\end{equation*}
and thus $\psi((X+\lambda))=(X)$; on the other hand,
\begin{equation*}
\psi(X+\mu)=h(1)^{-1}((\lambda+\mu)X-\lambda+\mu=(\lambda+\mu)(X-1)
\end{equation*}
and thus $\psi((X+\mu))=(X-1)$. Hence, $\psi(E)$ is a basic $X$-sequence, and $|\limiti_K(E)|=|\limiti_K(\psi(E))|$. By Lemma \ref{lemma:cardsgr}, $|\limiti_K(E)|$ is coprime with $p$, and thus $\limiti_K\subseteq\insN\setminus p\insN^+$. Thus the two sets are equal.

Suppose now that $K$ is separably closed. Then, every irreducible polynomial is either linear or in the form $X^{p^n}-a$ for some $a\in K$ and some $n\geq 1$; hence, every maximal ideal of $K[X]$ is contained in a a single prime ideal of $\overline{K}[X]$. Therefore, a $r(X)$-sequence $E$ in $K[X]$ is a $s(X)$-sequence in $\overline{K}[X]$, where $s(X)$ generates the prime ideal containing $r(X)$. In particular, $|\limiti_K(E)|=|\limiti_{\overline{K}}(E)|=\ell(E)$; therefore, $\limiti_K=\limiti_{\overline{K}}$ and thus $\limiti_K=\insN\setminus p\insN^+$, as claimed.
\end{proof}

\begin{teor}\label{teor:algchiusi-char}
Let $K,K'$ be two separably closed fields of characteristic $p,p'$ (respectively). If $G(K[X])$ and $G(K'[X])$ are homeomorphic, then $p=p'$.
\end{teor}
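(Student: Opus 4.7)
The plan is to combine the previously established invariants of the Golomb topology. If exactly one of $p, p'$ equals zero, then Corollary \ref{cor:car0p} immediately rules out any homeomorphism $G(K[X])\simeq G(K'[X])$; if both are zero there is nothing to prove. Hence I may assume $p, p' > 0$.

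Given that $G(K[X])$ and $G(K'[X])$ are homeomorphic, Proposition \ref{prop:limitiKE}\ref{prop:limitiKE:Omef} yields $\limiti_K = \limiti_{K'}$. (For completeness: any homeomorphism $h$ sends units to units, so replacing $h$ by the composition with multiplication by $h(1)^{-1}$ in $G(K'[X])$ one can always arrange $h(1) = 1$, as needed for part \ref{prop:limitiKE:hlim} of that proposition.) Applying Proposition \ref{prop:limitiKp} to each of the two separably closed fields of positive characteristic gives
\begin{equation*}
\limiti_K = \insN \setminus p\insN^+ \qquad \text{and} \qquad \limiti_{K'} = \insN \setminus p'\insN^+.
\end{equation*}
Both of these sets are subsets of $\insN$ whose complement in $\insN^+$ is a single arithmetic progression; the smallest positive integer absent from $\insN \setminus p\insN^+$ is $p$, and similarly the smallest positive integer absent from $\insN \setminus p'\insN^+$ is $p'$. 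Equating the two sets forces $p = p'$.

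There is essentially no new obstacle in this theorem: the substantive work has already been done in establishing that $\limiti_K$ is a homeomorphism invariant (Proposition \ref{prop:limitiKE}) and that, for separably closed fields of positive characteristic, $\limiti_K$ pins down the characteristic (Proposition \ref{prop:limitiKp}). The theorem is the immediate assembly of those two facts together with the characteristic-zero dichotomy of Section \ref{sect:char0}.
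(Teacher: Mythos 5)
Your proof is correct and follows exactly the same route as the paper: reduce to positive characteristic via Corollary \ref{cor:car0p}, then combine Proposition \ref{prop:limitiKE}\ref{prop:limitiKE:Omef} with Proposition \ref{prop:limitiKp} to conclude $\insN\setminus p\insN^+=\insN\setminus p'\insN^+$ and hence $p=p'$. The extra remarks (normalizing $h(1)=1$ and reading off $p$ as the least missing positive integer) are fine but not needed beyond what the paper already does.
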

\begin{proof}
By Corollary \ref{cor:car0p} we can suppose $p,p'>0$. By Proposition \ref{prop:limitiKE}\ref{prop:limitiKE:Omef}, $\limiti_K=\limiti_{K'}$. By Proposition \ref{prop:limitiKp} $\limiti_K=\insN\setminus p\insN^+$ and $\limiti_{K'}=\insN\setminus p'\insN^+$. Hence, $p=p'$.
\end{proof}

\begin{cor}
Let $K,K'$ be algebraically closed fields, and suppose that $G(K[X])\simeq G(K'[X])$. If one of them is uncountable, then $K\simeq K'$.
\end{cor}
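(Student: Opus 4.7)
The plan is to assemble three ingredients: the characteristic-equality result from Theorem \ref{teor:algchiusi-char}, a cardinality count derived directly from the existence of a homeomorphism, and Steinitz's classification of algebraically closed fields up to isomorphism.

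First I would extract that a homeomorphism $G(K[X])\to G(K'[X])$ is in particular a bijection of the underlying sets, so $|K[X]|=|K'[X]|$. Assuming without loss of generality that $K$ is uncountable, one has $|K[X]|=|K|$, so $K'[X]$ is uncountable; this forces $K'$ itself to be uncountable (since polynomial rings over finite or countable fields are countable), and then $|K'|=|K'[X]|=|K|$.

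Next, since algebraically closed fields are a fortiori separably closed, Corollary \ref{cor:car0p} rules out the case of different parity of characteristic, and Theorem \ref{teor:algchiusi-char} handles the positive-characteristic case, so $\car K=\car K'$.

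Finally, by Steinitz's theorem an algebraically closed field is determined up to isomorphism by its characteristic and its transcendence degree over the prime field, and for uncountable algebraically closed fields this transcendence degree coincides with the cardinality. Combining $\car K=\car K'$ with $|K|=|K'|$ (both uncountable) yields $K\simeq K'$. There is no serious obstacle here: all the real work is contained in Theorem \ref{teor:algchiusi-char}, and the role of the uncountability hypothesis is precisely to upgrade an equality of cardinals into an equality of transcendence degrees via Steinitz, which would fail in the countable case where two algebraically closed fields of the same characteristic can have any transcendence degree in $\{0,1,2,\ldots,\aleph_0\}$.
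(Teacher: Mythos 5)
Your proof is correct and follows essentially the same route as the paper: equality of cardinalities from the bijection $|K[X]|=|K'[X]|$, equality of characteristics from Proposition \ref{prop:caratt0-ptop}/Corollary \ref{cor:car0p} together with Theorem \ref{teor:algchiusi-char}, and then the Steinitz classification of uncountable algebraically closed fields by characteristic and cardinality (= transcendence degree). Your added remark explaining why uncountability is needed is accurate but not a departure from the paper's argument.
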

\begin{proof}
Since the cardinality of $K[X]$ is the same of $K$, if $G(K[X])\simeq G(K'[X])$ then $K$ and $K'$ have the same cardinality. If one of them has characteristic 0, then by Proposition \ref{prop:caratt0-ptop} so does the other; otherwise, they have the same positive characteristic by Theorem \ref{teor:algchiusi-char}. Since they are uncountable, algebraically closed and of the same characteristic, by \cite[Chapter VI, Theorem 1.12]{hungerford} $K$ and $K'$ are isomorphic, as claimed.
\end{proof}

In the countable case, we need to distinguish fields that have different degree of transcendence over $\overline{\insQ}$ or $\ins{F}_{p^\infty}$. If the characteristic is positive, the following Proposition \ref{prop:charp-algtrasc} will show that we can distinguish $\ins{F}_{p^\infty}$ from the other fields, but it is an open question if, for example, the algebraic closure of $\ins{F}_p(T)$ and the algebraic closure of $\ins{F}_p(T_1,T_2)$ give rise to non-homeomorphic Golomb spaces.

\section{Almost prime elements}\label{sect:G1}
Let $R$ be a Dedekind domain. An element $b\in R$ is said to be \emph{almost prime} if it is irreducible and it is contained in a unique prime ideal; this happens if and only if $bR=P^n$ for some prime ideal $P$ and $n$ is exactly the order of the class of $P$ in the class group.
\begin{defin}
We say that a Dedekind domain $R$ with torsion class group \emph{has the almost Dirichlet property} (or, simply, that $R$ is \emph{almost Dirichlet}) if any coprime coset contains (at least) one almost prime element, that is, if the set of almost prime elements is dense in $G(R)$.
\end{defin}

\begin{oss}\label{oss:dirichlet}
~\begin{enumerate}[(1)]
\item If $R$ has torsion class group, $h:G(R)\longrightarrow G(S)$ is a homeomorphisms of Golomb spaces and $b\in R$ is contained in a unique prime ideal, the same happens for $h(b)$ \cite[Proposition 2.7]{golomb-almcyc}. However, it is an open question whether $h$ sends irreducible elements into irreducible elements; in particular, we do not know if the almost Dedekind property is a topological invariant (with respect to the Golomb topology).
\item If $R$ is almost Dirichlet, then $G_1(R)$ is dense in $G(R)$, as every almost prime element belongs to $G_1(R)$.
\item\label{oss:dirichlet:2} By Dirichlet's theorem on primes in arithmetic progressions, the ring $\insZ$ of integers is almost Dirichlet. The same happens when $R=\insQ[X]$ or $R=F[X]$, where $F$ is a finite field \cite[Theorem 4.8]{rosen}.
\item\label{oss:dirichlet:pac} A field $F$ is said to be \emph{pseudo-algebraically closed} (PAC) if every nonempty absolutely irreducible variety defined over $F$ has an $F$-rational point \cite[Chapter 11]{field_arithmetic}. If $F$ is PAC and contains separable irreducible polynomials of arbitrarily large degree, then every coprime coset contains irreducible polynomials, and $F[X]$ has the almost Dirichlet property \cite[Theorem A]{dirichlet-polynomial}.
\end{enumerate}
\end{oss}

\begin{prop}\label{prop:dirichlet-aeff}
Let $F$ be an algebraic extension of a finite field that is not algebraically closed. Then, $F[X]$ has the almost Dirichlet property.
\end{prop}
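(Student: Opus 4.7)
The plan is to split the proof according to whether $F$ is finite or infinite. If $F$ is finite, the statement is exactly Remark \ref{oss:dirichlet}(\ref{oss:dirichlet:2}), inherited from the classical Dirichlet theorem in function fields. So I may assume that $F$ is an infinite subfield of the algebraic closure $\ins{F}_{p^\infty}$ of $\ins{F}_p$ with $F\neq\ins{F}_{p^\infty}$, and my plan is then to apply the criterion in Remark \ref{oss:dirichlet}(\ref{oss:dirichlet:pac}), which reduces the task to showing (a) that $F$ is pseudo-algebraically closed, and (b) that $F[X]$ contains separable irreducible polynomials of arbitrarily large degree.

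For (a), I will write $F=\bigcup_{n\in S}\ins{F}_{p^n}$, where $S$ is the set of positive integers $n$ such that $\ins{F}_{p^n}\subseteq F$. This $S$ is closed under divisors and least common multiples, and, since $F$ is infinite, it is unbounded. Given an absolutely irreducible variety $V$ defined over $F$, only finitely many elements of $F$ appear as coefficients of its defining equations, so $V$ is already defined over some finite subfield $\ins{F}_{p^{n_0}}$ with $n_0\in S$, and remains absolutely irreducible there. By the Lang--Weil estimates, $V$ has an $\ins{F}_{p^m}$-rational point for every sufficiently large multiple $m$ of $n_0$; since $S$ is closed under lcm and unbounded, it contains arbitrarily large multiples of $n_0$, so for some such $m$ we have $\ins{F}_{p^m}\subseteq F$ and hence $V(F)\neq\emptyset$.

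For (b), note that $F$, being a subfield of the perfect field $\ins{F}_{p^\infty}$, is itself perfect, so every irreducible polynomial of $F[X]$ is automatically separable. It therefore suffices to show that $F[X]$ contains irreducible polynomials of unbounded degree, that is, $[\overline{F}:F]=\infty$. If instead $[\overline{F}:F]$ were finite and strictly greater than $1$, the Artin--Schreier theorem would force $\car F=0$, contradicting $\car F=p>0$. Hence $[\overline{F}:F]=\infty$, and (b) holds.

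The main obstacle is verifying the PAC property, since it rests on the nonelementary Lang--Weil bound; once this input is in place, the remaining ingredients — the lcm-closure trick for $S$, the perfectness of $F$, and the Artin--Schreier dichotomy in positive characteristic — are very light, and Remark \ref{oss:dirichlet}(\ref{oss:dirichlet:pac}) then delivers the conclusion.
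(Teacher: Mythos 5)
Your proof follows essentially the same route as the paper's: the finite case is delegated to Rosen's function-field Dirichlet theorem, and the infinite case is handled by verifying the two hypotheses of the criterion in Remark \ref{oss:dirichlet}\ref{oss:dirichlet:pac} (i.e.\ \cite[Theorem A]{dirichlet-polynomial}) — the paper simply cites \cite[Corollary 11.2.4]{field_arithmetic} for the PAC property where you reprove it via Lang--Weil, and your perfectness-plus-Artin--Schreier argument for irreducible polynomials of unbounded degree is a fleshed-out version of the paper's one-line assertion. One small quibble: a subfield of a perfect field need not be perfect in general (e.g.\ $\ins{F}_p(t)\subseteq\overline{\ins{F}_p(t)}$); here $F$ is perfect because it is an \emph{algebraic} extension of the perfect field $\ins{F}_p$, so your conclusion stands.
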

\begin{proof}
If $F$ is finite, the claim follows from \cite[Theorem 4.8]{rosen}. If not, then $F$ is pseudo-algebraically closed \cite[Corollary 11.2.4]{field_arithmetic} and has simple separable extensions of arbitrarily large degree, and thus $F[X]$ is almost Dirichlet by \cite[Theorem A]{dirichlet-polynomial}.
\end{proof}

A simple consequence of the Remark \ref{oss:dirichlet}\ref{oss:dirichlet:2} and of Proposition \ref{prop:C-GnK} is the following.
\begin{cor}
$G(\insQ[X])\not\simeq G(\overline{\insQ}[X])$.
\end{cor}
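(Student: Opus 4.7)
The plan is to use density of $G_1$ as a topological invariant that distinguishes the two Golomb spaces. Both $\insQ[X]$ and $\overline{\insQ}[X]$ are principal ideal domains, hence have trivial (so torsion) class group; therefore, by the remark following the definition of $G_n(R)$, any hypothetical homeomorphism $h\colon G(\insQ[X])\longrightarrow G(\overline{\insQ}[X])$ must restrict to a homeomorphism $G_1(\insQ[X])\longrightarrow G_1(\overline{\insQ}[X])$, and in particular $h$ must send the closure of $G_1(\insQ[X])$ onto the closure of $G_1(\overline{\insQ}[X])$. So it suffices to check that $G_1$ is dense on one side but not on the other.

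For the ``dense'' side, the plan is to invoke Remark \ref{oss:dirichlet}\ref{oss:dirichlet:2}: since $\insQ[X]$ is almost Dirichlet, every coprime coset contains an almost prime element, and every almost prime element lies in a unique prime ideal, hence belongs to $G_1(\insQ[X])$. Therefore $G_1(\insQ[X])$ meets every subbasic open set and is dense in $G(\insQ[X])$.

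For the ``not dense'' side, I would apply Proposition \ref{prop:C-GnK} with $n=1$: since $\overline{\insQ}$ is algebraically closed of characteristic $0$, the subspace $G_1(\overline{\insQ}[X])$ is closed in $G(\overline{\insQ}[X])$. Because $G_1(\overline{\insQ}[X])$ is a proper subset of $G(\overline{\insQ}[X])$ (for example, any nonzero constant is a unit and lies in $G_0$, not in $G_1$), its closure coincides with itself and is not the whole space, so it is not dense.

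There is essentially no obstacle, as both ingredients have already been established in the paper. Assembling them: if a homeomorphism existed, density would be transported, contradicting the fact that $G_1(\overline{\insQ}[X])$ is closed and proper. The only point worth double-checking is the non-triviality of the complement of $G_1(\overline{\insQ}[X])$ in $G(\overline{\insQ}[X])$, which is immediate from the presence of units (equivalently, of elements of $G_0$) in every Golomb space of a Dedekind domain that is not a field.
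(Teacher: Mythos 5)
Your proposal is correct and follows exactly the route the paper intends: it combines Remark \ref{oss:dirichlet}\ref{oss:dirichlet:2} (so $G_1(\insQ[X])$ is dense, via almost primes) with Proposition \ref{prop:C-GnK} (so $G_1(\overline{\insQ}[X])$ is closed and proper, hence not dense), and uses that a homeomorphism carries $G_1$ to $G_1$ since both rings are PIDs. The extra care you take in checking that $G_1(\overline{\insQ}[X])$ is a proper subset is a detail the paper leaves implicit, but it is correct and completes the argument.
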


We now want to prove that, at least in some cases, a homeomorphism of Golomb spaces preserves almost prime elements and, to do so, we shall abstract the proof of \cite[Lemmas 5.10 and 5.11]{bmt-golomb}.	
\begin{defin}
Let $R$ be a Dedekind domain with torsion class group. We say that $R$ is \emph{power separated} if, for every maximal ideal $P$ and every $b\in G_{\{P\}}(R)$, we have $\overline{\pow{b}}\cap G_{\{P\}}(R)=\pow{b}$.
\end{defin}

A more explicit sufficient condition is the following.
\begin{prop}
Let $R$ be a Dedekind domain, and suppose there is a function $d:R^\nz\longrightarrow[1,+\infty)$ such that, for all $a,b\in R^\nz$:
\begin{itemize}
\item $d(ab)=d(a)d(b)$;
\item $d(a+b)\leq d(a)+d(b)$ if $a\neq-b$;
\item $d(a)=1$ if and only if $a$ is a unit.
\end{itemize}
Then, $R$ is power separated.
\end{prop}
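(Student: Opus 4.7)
The plan is to argue the contrapositive: given $c \in G_{\{P\}}(R) \setminus \pow{b}$, produce a coprime coset containing $c$ that misses $\pow{b}$. The family of neighborhoods that makes everything work is $c + (1 + b^k)R$ for $k \geq 1$. Each is a genuine basic open set: $1 + b^k \neq 0$, since $b^k = -1$ would force $d(b)^k = 1$ and hence $d(b) = 1$, contradicting that $b$ is a non-unit; moreover $1 + b^k$ is itself a non-unit for $k$ large, because $d(1 + b^k) \geq d(b)^k - 1 > 1$; and $(1+b^k)R$ is coprime to $c$, since $1 + b^k \notin P$ (as $1 \notin P$ but $b^k \in P$) while $P$ is the only prime containing $c$.

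The combinatorial trick is that modulo $1 + b^k$ one has $b^k \equiv -1$, so any exponent $n = qk + r$ with $0 \leq r < k$ collapses to $b^n \equiv (-1)^q b^r$. Assuming $c \in \overline{\pow{b}}$, then for every (large) $k$ one gets $u_k \in U(R)$, $n_k \geq 1$, $\epsilon_k \in \{\pm 1\}$ and $0 \leq r_k < k$ with $y_k := c - \epsilon_k u_k b^{r_k} \in (1 + b^k)R$. If $y_k = 0$, then $c = \epsilon_k u_k b^{r_k}$; the case $r_k = 0$ forces $c$ to be a unit, contradicting $c \in G_{\{P\}}(R)$, while $r_k \geq 1$ places $c$ in $\pow{b}$, contrary to hypothesis. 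So $y_k \neq 0$.

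Now the $d$-axioms squeeze $y_k$ from both sides. Multiplicativity applied to $y_k = (1 + b^k) s_k$ with $s_k \neq 0$ gives $d(y_k) \geq d(1 + b^k) \geq d(b)^k - 1$, the second step following from the reverse-triangle consequence of subadditivity together with $d(-1) = 1$. Subadditivity applied to $y_k = c + (-\epsilon_k u_k b^{r_k})$ (valid because $y_k \neq 0$) yields $d(y_k) \leq d(c) + d(b)^{r_k} \leq d(c) + d(b)^{k-1}$. Combining,
\begin{equation*}
d(b)^{k-1}\bigl(d(b) - 1\bigr) \leq d(c) + 1,
\end{equation*}
which, using $d(b) > 1$, fails for every sufficiently large $k$. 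Thus the $y_k \neq 0$ branch becomes impossible and the earlier $y_k = 0$ contradiction takes over.

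The main obstacle is spotting the ideal $(1+b^k)R$. A generic coprime coset gives no link between the size of the ideal and the exponent $n_k$, so the squeeze argument has nothing to bite on. The virtue of $1 + b^k$ is that it simultaneously guarantees coprimality with $c$ and reduces $n_k$ modulo $k$, producing competing bounds of orders $d(b)^k$ and $d(b)^{k-1}$ on the same quantity $d(y_k)$; this asymmetry is exactly what kills the remaining case. Once the ideal is chosen, the rest is mechanical manipulation of the three axioms.
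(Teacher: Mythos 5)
Your proof is correct and follows essentially the same route as the paper's: the paper takes the ideal $(b^t-1)R$ and reduces exponents via $b^t\equiv 1$, while you take $(1+b^k)R$ and reduce via $b^k\equiv -1$ (picking up a harmless sign), after which both arguments run the identical squeeze $d(b)^k-1\leq d(\text{difference})\leq d(c)+d(b)^{k-1}$. The only difference is the sign convention in the chosen generator, so the two proofs are interchangeable.
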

\begin{proof}
Let $P$ be a prime ideal, $b\in G_{\{P\}}(R)$ and $c\in G_{\{P\}}(R)\setminus\pow{b}$. By hypothesis, $d(b)>1$, and thus we can find an integer $t$ such that $d(b)^t>d(b)^{t-1}+d(c)+1$. Let $I:=(b^t-1)R$: then, $c+I$ is open (since $b^t-1\notin P$), and we claim that $(c+I)\cap\pow{b}=\emptyset$.

Indeed, suppose not, and let $z$ be in the intersection. Then, $z=ub^r$ for some $u\in U(R)$, $r\inN$. Since $b^t\equiv 1\bmod I$, we see that $z\equiv ub^s\bmod I$ for some $s\in\{0,\ldots,t-1\}$ (setting $b^0:=1$), and thus $c\equiv ub^s\bmod I$, i.e., $c-ub^s\in I$. However, as $c\neq ub^s$ we can calculate
\begin{equation*}
d(c-ub^s)\leq d(c)+d(ub^s)=d(c)+d(b)^s\leq d(c)+d(b)^{t-1}<d(b)^t-1\leq d(b^t-1).
\end{equation*}
For all $x\in I$, we have $d(x)\geq d(b^t-1)$; this is a contradiction, and thus $c+(b^t-1)R$ does not meet $\pow{b}$. Therefore, $\pow{b}$ is closed in $G_{\{P\}}(R)$, and thus $R$ is power separated.
\end{proof}

\begin{cor}
The following hold.
\begin{enumerate}[(a)]
\item If $R$ is contained in $\insC$ and has only finitely many units, then $R$ is power separated.
\item If $R=K[X]$ for some field $K$, then $R$ is power separated.
\end{enumerate}
\end{cor}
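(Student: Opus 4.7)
The plan is to apply the preceding Proposition in both cases by exhibiting an explicit function $d \colon R^\nz \to [1,+\infty)$ satisfying the three required conditions (multiplicativity, subadditivity, and $d(a)=1$ exactly when $a \in U(R)$).

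For part (b), with $R = K[X]$, the natural choice is $d(f):=2^{\deg f}$ (any base $c>1$ would work). Multiplicativity is immediate from the additivity of the degree, and subadditivity follows from $\deg(f+g) \leq \max\{\deg f, \deg g\}$, which yields $d(f+g) \leq \max\{d(f),d(g)\} \leq d(f)+d(g)$. Finally, $d(f)=1$ exactly when $\deg f=0$, i.e., when $f$ is a nonzero constant, which is precisely $U(K[X])=K^\nz$. All three conditions are verified essentially formally, so the Proposition applies and $K[X]$ is power separated.

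For part (a), take $d(a):=|a|$, the ordinary complex absolute value restricted to $R^\nz$. Multiplicativity and subadditivity are standard properties of $|\cdot|$ on $\insC$, so the only real content is the unit characterization. The easy direction is that $U(R)$, being a finite subgroup of $\insC^\nz$, is cyclic and consists of roots of unity, so $|u|=1$ for every unit $u$. The harder direction---that $|a|\geq 1$ for every $a \in R^\nz$, with $|a|=1$ exactly when $a$ is a unit---is the main obstacle. The plan is to argue that for $a \in R^\nz$ with $|a|\leq 1$ the powers $\{a^n\}_{n\geq 1}$ all remain in the closed unit disk of $\insC$; combining this boundedness with the finiteness of $U(R)$ and the Noetherian structure of the Dedekind ring $R$ should force a coincidence $a^m = u a^n$ with $u \in U(R)$ and $m>n$, whence $a^{m-n}=u$ and $a$ itself is a unit. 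Once this is in place, both conditions $|a|\geq 1$ and ``$|a|=1 \Leftrightarrow a\in U(R)$'' follow, the Proposition applies, and $R$ is power separated.
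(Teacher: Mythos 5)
Part (b) is complete and coincides with the paper's argument (the paper also takes $d(f)=2^{\deg f}$), and in part (a) you pick the same function as the paper, namely the complex modulus. You also correctly identify the genuine content of part (a): the Proposition requires $d$ to take values in $[1,+\infty)$ (its proof uses $d(x)\geq d(b^t-1)$ for all $x\in(b^t-1)R$, which needs $d\geq 1$ everywhere), so one must show that $|a|\geq 1$ for every $a\in R^\nz$ and that $|a|=1$ forces $a\in U(R)$. The problem is that the mechanism you propose for this step does not work. A relation $a^m=ua^n$ with $u\in U(R)$ and $m>n$ is equivalent to $(a^m)=(a^n)$, i.e.\ to the stabilization of the descending chain $(a)\supseteq(a^2)\supseteq\cdots$; Noetherianity gives the ascending chain condition, not the descending one, and in any Dedekind domain this chain is strictly decreasing as soon as $a$ is a nonzero nonunit. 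The finiteness of $U(R)$ does not help either, since nothing so far prevents the cosets $a^nU(R)$ from being pairwise distinct while the $a^n$ stay in the unit disk. So as written the key step is not proved, and the proposed route to it is a dead end.

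The step can be repaired, but by a metric rather than an ideal-theoretic argument. If $R$ is discrete as a subset of $\insC$ --- as happens for $\insZ$ and for the rings of integers of imaginary quadratic fields, which are the rings of integers with finite unit group --- then for $|a|\leq 1$ the set $\{a^n\mid n\geq 1\}$ is a bounded subset of a discrete set, hence finite, giving $a^m=a^n$ for some $m>n$ and therefore $a^{m-n}=1$; thus every element of modulus at most $1$ is a root of unity, and every nonunit satisfies $|a|>1$. (Alternatively, in those rings $|a|^2$ is the absolute value of the field norm of $a$, a positive rational integer, which gives $|a|\geq 1$ with equality exactly for units.) Conversely every unit, lying in a finite group, is a root of unity and has modulus $1$, which is the only part of the verification you (and the paper) actually carry out. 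To be fair, the paper's own proof of part (a) is a single sentence that elides exactly the same point; but since your proposal singles it out as the main obstacle, it should be closed by one of the arguments above rather than by an appeal to Noetherianity.
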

\begin{proof}
In the first case, all units of $R$ are roots of unity, so we can take the complex modulus as $d$. For the second case, set $d(p):=2^{\deg(p)}$.
\end{proof}

\begin{teor}\label{teor:almprime}
Let $R$ be a Dedekind domain with torsion class group, and suppose that $R$ is power separated and has the almost Dirichlet property. If $S$ is a Dedekind domain and $h:G(S)\longrightarrow G(R)$ is a homeomorphism, then $h$ sends almost prime elements into almost prime elements.
\end{teor}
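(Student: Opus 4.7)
\emph{Reduction to a numerical claim.} Let $b\in S$ be an almost prime with $bS=P^n$ and $n=\mathrm{ord}([P])$, and set $c:=h(b)$. First, I would apply the $G_\Delta$-invariance recalled in Section~\ref{sect:prelim} to $h^{-1}\colon G(R)\to G(S)$, which is legitimate since $R$ has torsion class group: the equality $h^{-1}(G_1(R))=G_1(S)$ combined with the disjointness of the decompositions $G_1(T)=\bigsqcup_{\mathfrak p}G_{\{\mathfrak p\}}(T)$ forces $\Psi\colon Q\mapsto h^{-1}(Q^\nz)\cup\{0\}$ to be a bijection between $\Max(R)$ and the set of primes $P$ of $S$ with $G_{\{P\}}(S)\neq\emptyset$, with $h(G_{\{P\}}(S))=G_{\{\Psi^{-1}(P)\}}(R)$. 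In particular, $c\in G_{\{Q\}}(R)$ for a unique prime $Q$. Setting $m:=\mathrm{ord}([Q])$ and fixing an almost prime $q\in R$ with $qR=Q^m$ (which exists because $[Q]^m=1$), every element of $G_{\{Q\}}(R)$ is uniquely $vq^i$ with $v\in U(R)$ and $i\geq 1$; so $c=uq^k$ for some $u\in U(R)$ and $k\geq1$, and the theorem reduces to proving $k=1$.

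\emph{Main step.} By the power-separation hypothesis, $\pow{c}=\pow{q^k}$ is closed in $G_{\{Q\}}(R)$, so the condition $k=1$ is equivalent to the density of $\pow{c}$ in $G_{\{Q\}}(R)$. Suppose for contradiction $k\geq 2$. Then $q\notin\pow{c}$, hence $q\notin\overline{\pow{c}}$, and I may choose a coprime coset $V=q+J$ (with $J$ coprime to $q$, i.e.\ $J\not\subseteq Q$) containing $q$ and disjoint from $\pow{c}$. I would then use the almost-Dirichlet hypothesis on $R$ to produce, within carefully chosen coprime subcosets of $V$, almost primes in $G_{\{Q\}}(R)$; pulling them back through $h^{-1}$ into $G_{\{P\}}(S)=\pow{b}$ and exploiting the topological bijection $h\colon\pow{b}\to\pow{q}$, one should be able to assemble an element of $\pow{c}$ that lies in $V$, contradicting the choice of $V$.

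\emph{Principal obstacle.} The delicate point is precisely this last assembly. Because $h$ is not a ring homomorphism, $h(\pow{b})=\pow{q}$ while $\pow{h(b)}=\pow{c}$ is strictly smaller when $k\geq 2$, so one cannot ``exponentiate $b$ under $h$'' directly to land in $\pow{c}$. The almost-Dirichlet hypothesis is what compensates for this missing multiplicativity: it furnishes a dense supply of almost primes in every coprime coset of $R$ whose $h^{-1}$-images populate $\pow{b}$ densely enough to iterate back through $h$. This is the abstract analogue of the use of Dirichlet's theorem on primes in arithmetic progressions in the proof of \cite[Lemmas~5.10 and~5.11]{bmt-golomb}, and carrying out the iteration precisely in the present general Dedekind setting is the hardest part of the proof.
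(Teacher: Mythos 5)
Your reduction is fine as far as it goes: using \cite[Proposition 2.7]{golomb-almcyc} to see that $c:=h(b)$ lies in $G_{\{Q\}}(R)$ for a single prime $Q$, writing $c=uq^k$ with $q$ an almost prime generator of $Q^m$, and observing that power separation makes ``$k=1$'' equivalent to ``$\pow{c}$ is dense in $G_{\{Q\}}(R)$'' are all correct. But the ``main step'' is only a declaration of intent, and the mechanism needed to carry it out is exactly what is missing. To contradict the choice of $V=q+J$ you must manufacture elements of $\pow{c}=\pow{h(b)}$ inside $V$; yet the homeomorphism $h$ only hands you elements of $h(\pow{b})=G_{\{Q\}}(R)=\pow{q}$, and when $k\geq 2$ the set $\pow{c}$ is a \emph{proper} subset of $\pow{q}$. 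Whether $h$ of a power of $b$ lands in $\pow{h(b)}$ is precisely the statement under proof, so ``pulling back through $h^{-1}$ and exploiting the bijection $h\colon\pow{b}\to\pow{q}$'' gives you no access to $\pow{c}$ at all; as sketched, the assembly cannot be completed.

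The idea that fills this gap in the paper is to conjugate the \emph{power maps} rather than work with $h$ alone. For a unit $u_0\in S$ and $n\geq 1$, the map $f\colon x\mapsto u_0x^n$ is continuous on $G(S)$, so $\phi:=h\circ f\circ h^{-1}$ is a continuous self-map of $G(R)$ preserving every $G_{\{P\}}(R)$; since an almost prime $p$ satisfies $\pow{p}=G_{\{P\}}(R)$, this forces $\phi(p)=up^l$ for some unit $u$ and $l\geq 1$ --- a constraint you get for free at almost primes but not at $b$ itself. Continuity of $\phi$ at $b=h(a)$ gives $\phi(b+dR)\subseteq c+I$ for a suitable coprime coset, the almost Dirichlet property supplies an almost prime $p\in b+dI$, and the congruence $up^l\equiv ub^l\bmod I$ then shows that the arbitrary basic neighborhood $c+I$ of $c=\phi(b)=h(u_0a^n)$ meets $\pow{b}$; power separation concludes $c\in\pow{b}$, i.e. $h(\pow{a})\subseteq\pow{h(a)}$, and comparing with $h(G_{\{P\}}(S))=G_{\{Q\}}(R)$ finishes the theorem. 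Without some such device that converts the abstract homeomorphism into a map whose values at almost primes are algebraically constrained, your iteration has nothing to iterate.
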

\begin{proof}
Let $a\in S$ be an element contained in a unique prime ideal, and let $b:=h(a)$. We first claim that $h(\pow{a})\subseteq\pow{b}$.

Fix a unit $u_0\in S$ and an integer $n\geq 1$. Let $f:G(S)\longrightarrow G(S)$ be the map sending every $x$ to $u_0x^n$, and let $\phi:G(R)\longrightarrow G(R)$ be the composition $h\circ f\circ h^{-1}$. Then, $f$ is continuous in the Golomb topology, and thus so is $\phi$; furthermore, if $P$ is a prime ideal of $R$, then $h\circ f\circ h^{-1}(P)\subseteq P$ since $h^{-1}(P)$ is a prime ideal of $S$. Let
\begin{equation*}
c:=\phi(b)=\phi(h(a))=(h\circ f\circ h^{-1}\circ h)(a)=h(u_0a^n).
\end{equation*}
Suppose that $c\notin\pow{b}$: then, since $R$ is power separated, we can find an open set $\Omega:=c+I$ such that $\Omega\cap\pow{b}$ does not meet $G_{\{Q\}}(R)$ (where $Q$ is the radical of $bR$). Since $\phi$ is continuous, $\phi^{-1}(\Omega)$ is an open set containing $b$; hence, there is a $d\in R$, coprime with $b$, such that $\phi(b+dR)\subseteq\Omega$. 

Since $R$ is almost Dirichlet we can find an almost prime element $p\in b+dI$. Then, $\pow{p}=G_{\{P\}}(R)$, where $P$ is the only prime ideal containing $p$; hence, $\phi(p)\in\pow{p}$, i.e., there are $u\in U(R)$ and $l\inN^+$ such that $\phi(p)=up^l$. On the other hand,
\begin{equation*}
\phi(p)\in\phi(b+dR)\subseteq\Omega=c+I
\end{equation*}
and, at the same time,
\begin{equation*}
up^l\in u(b+dI)^l\subseteq u(b^l+I)=ub^l+I;
\end{equation*}
it follows that $c\equiv ub^l\bmod I$, i.e., $ub^l\in c+I=\Omega$. This contradicts the choice of $I$; hence, $c$ must be in $\pow{b}$, that is, $h(u_0a^n)=c=ub^l$ for some $l$. Since this happens for every $u_0$ and every $n$, we have $h(\pow{a})\subseteq\pow{b}$.

Suppose now that $a$ is almost prime, and let $P$ and $Q$ be, respectively, the only prime ideal containing $a$ and the only prime ideal containing $b$. Then,
\begin{equation*}
G_{\{Q\}}(R)=h(G_{\{P\}}(S))=h(\pow{a})\subseteq\pow{b}\subseteq G_{\{Q\}}(R).
\end{equation*}
Thus $\pow{b}=G_{\{Q\}}(R)$, i.e., $b$ is almost prime.
\end{proof}

\section{Algebraic extensions of $\ins{F}_p$}\label{sect:algFp}
As observed in \cite[Corollary 14]{clark-golomb}, a consequence of the fact that a homeomorphism of Golomb spaces sends units to units is that if $K,K'$ are distinct finite fields then the Golomb spaces $G(K[X])$ and $G(K'[X])$ are not homeomorphic. The purpose of this section is to generalize this result, allowing $K$ and $K'$ to be arbitrary algebraic extensions of the same $\ins{F}_p$.

The first step is to distinguish algebraic extensions from transcendental extensions. 
\begin{prop}\label{prop:charp-algtrasc}
Let $K$ be a field of characteristic $p>0$ and let $g\in K[X]$ be an irreducible polynomial. Then, the following are equivalent.
\begin{enumerate}[(i)]
\item\label{prop:charp-algtrasc:disc} $\pow{g}$ is not discrete in $G(K[X])$;
\item\label{prop:charp-algtrasc:s1s2} for every $s_1,s_2\in K$, either $g(s_1)=0$, $g(s_2)=0$ or $g(s_1)/g(s_2)$ is a root of unity;
\item\label{prop:charp-algtrasc:alg} $K$ is algebraic over $\ins{F}_p$.
\end{enumerate}
\end{prop}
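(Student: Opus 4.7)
The plan is to prove the cyclic chain of implications (iii) $\Rightarrow$ (i) $\Rightarrow$ (ii) $\Rightarrow$ (iii). The first two implications are essentially coset calculations; the main obstacle is the last step, where a purely multiplicative hypothesis on values of $g$ has to be converted into algebraicity of every element of $K$, which will require a cardinality argument together with a Vandermonde interpolation trick.

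For (iii) $\Rightarrow$ (i), I would repeat the argument behind Proposition \ref{prop:Gn-pinfty}\ref{prop:Gn-pinfty:isolated}. Given any $b \in K[X]$ coprime to $g$, the finitely many coefficients of $g$ and $b$ lie in a common finite subfield $F \subseteq K$, so $F[X]/(b)$ is a finite ring in which $\bar g$ is a unit of finite multiplicative order. Hence $g^m \equiv 1 \pmod{b}$ for some $m \geq 1$, and for every $cg^n \in \pow{g}$ the element $cg^{n+m}$ is a distinct point of $\pow{g} \cap (cg^n + bK[X])$. Thus $\pow{g}$ has no isolated points.

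For (i) $\Rightarrow$ (ii), I would unpack non-discreteness directly. There exist $c \in K^\nz$ and $n \geq 1$ such that every basic neighborhood $cg^n + bK[X]$ (with $b$ coprime to $g$) contains some distinct $c'g^{n'} \in \pow{g}$. Given $s_1, s_2 \in K$ with $g(s_i) \neq 0$ and $s_1 \neq s_2$ (the remaining case being trivial), take $b := (X-s_1)(X-s_2)$, which is coprime to $g$ because $g$ is irreducible and does not vanish at either $s_i$. Evaluating the congruence $c'g^{n'} \equiv cg^n \pmod{b}$ at $X = s_1$ and $X = s_2$ gives $c'g(s_i)^{n'} = cg(s_i)^n$; dividing these equalities yields $(g(s_1)/g(s_2))^{n - n'} = 1$, and the case $n = n'$ is excluded because it would force $c' = c$ via the coprimality of $g^n$ with $b$. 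Thus $g(s_1)/g(s_2)$ is a root of unity.

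For (ii) $\Rightarrow$ (iii), assume $K$ is infinite (otherwise the conclusion is immediate) and set $d := \deg g$. Fix $s_0 \in K$ with $g(s_0) \neq 0$. The map $s \mapsto g(s)/g(s_0)$, defined on the cofinite subset $\{s \in K : g(s) \neq 0\}$, has fibers of size at most $d$ and infinite domain, so its image is infinite; by (ii) this image lies in the group of roots of unity of $K$, hence in $K \cap \overline{\ins{F}_p}$. Therefore $K \cap \overline{\ins{F}_p}$ is infinite, and I can pick $d+1$ distinct elements $s_0, s_1, \ldots, s_d \in K \cap \overline{\ins{F}_p}$ that are not roots of $g$. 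Writing $g(X) = \sum_j a_j X^j$ and $g(s_i) = g(s_0)\zeta_i$ for roots of unity $\zeta_i \in \overline{\ins{F}_p}$, the linear system $\sum_j (a_j/g(s_0)) s_i^j = \zeta_i$ has a Vandermonde coefficient matrix invertible over $\overline{\ins{F}_p}$, so $a_j/g(s_0) \in \overline{\ins{F}_p}$ for every $j$; equivalently, $\tilde g(X) := g(X)/g(s_0) \in \overline{\ins{F}_p}[X]$. Finally, for any $\alpha \in K$: if $g(\alpha) = 0$ then $\alpha$ is a root of $\tilde g$; if $g(\alpha) \neq 0$ then (ii) gives $\tilde g(\alpha) \in \overline{\ins{F}_p}$, so $\alpha$ is a root of the non-constant polynomial $\tilde g(X) - \tilde g(\alpha) \in \overline{\ins{F}_p}[X]$. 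In either case $\alpha$ is algebraic over $\ins{F}_p$.
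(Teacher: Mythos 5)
Your proof is correct, and its overall skeleton (the cycle through the same three implications) matches the paper's; the steps (iii) $\Rightarrow$ (i) and (i) $\Rightarrow$ (ii) are essentially identical to the paper's arguments (the former being exactly the idea of Proposition \ref{prop:Gn-pinfty}\ref{prop:Gn-pinfty:isolated}, the latter the same evaluation of a coset congruence modulo $(X-s_1)(X-s_2)$). Where you genuinely diverge is (ii) $\Rightarrow$ (iii). The paper argues by contraposition: it picks a transcendental element, forms the finitely generated subfield $F$ it generates together with $s_1$ and the coefficients of $g$, and uses the fact that such an $F$ contains only finitely many roots of unity to produce an $s_2\in F$ with $g(s_2)/g(s_1)$ not a root of unity. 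You argue directly: the finite-fiber count shows that $K\cap\overline{\ins{F}_p}$ is infinite, the Vandermonde system at $d+1$ algebraic interpolation nodes forces the normalized polynomial $g/g(s_0)$ to have coefficients in $\overline{\ins{F}_p}$, and then every $\alpha\in K$ is a root of a nonconstant polynomial over $\overline{\ins{F}_p}$. Your route avoids the (true but not completely trivial) input that the relative algebraic closure of $\ins{F}_p$ in a finitely generated transcendental extension is finite, and it extracts strictly more information, namely that $g$ is a unit multiple of a polynomial with coefficients algebraic over $\ins{F}_p$; the paper's contrapositive is shorter once that fact about finitely generated fields is granted. One cosmetic point: you use $s_0$ for two different elements (first an arbitrary non-root of $g$ in $K$, then one of the $d+1$ interpolation nodes required to lie in $K\cap\overline{\ins{F}_p}$); rename one of them.
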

\begin{proof}
\ref{prop:charp-algtrasc:disc} $\Longrightarrow$ \ref{prop:charp-algtrasc:s1s2} Fix $\lambda:=ug^n\in\pow{g}$. Let $s_1,s_2\in K$ be such that $g(s_1)\neq 0\neq g(s_2)$, and let $I$ be the ideal of $K[X]$ generated by $(X-s_1)(X-s_2)$: then, $I$ is coprime with $g$, and thus $\lambda+I$ is an open subset of $G(K[X])$. Since $\pow{g}$ is not discrete, there are infinitely many $\lambda':=u'g^m\in\lambda+I$, with $\lambda'\neq\lambda$.

Therefore, $I$ contains $u'g^m-ug^n=u'g^n(g^r-v)$, where $r:=m-n$ and $v:=uu'^{-1}$ (with $g^0:=1$); setting $h:=g^r-v$, it follows that $h(s_1)=h(s_2)=0$, and thus that $r>0$ (since if $r=0$ then $v\neq 1$ and $h$ is a nonzero constant) and $g(s_1)^r=v=g(s_2)^r$. Hence, $(g(s_1)/g(s_2))^r=v/v=1$; that is, $g(s_1)/g(s_2)$ is a root of unity, as claimed.

\ref{prop:charp-algtrasc:s1s2} $\Longrightarrow$ \ref{prop:charp-algtrasc:alg} Suppose not: then, $K$ is infinite. Let $s_1$ be any element of $K$ such that $g(s_1)\neq 0$. Let $F$ be field generated by $s_1$, the coefficients of $g$ and an element of $K$ that is transcendental over the prime field: then, $F$ is infinite and contains only finitely many roots of unity. Hence, there are only finitely many $t\in F$ such that $g(t)=0$ or $g(t)=ug(s_1)$ for some root of unity $u$ in $F$. In particular, there is an $s_2$ which does not satisfy either equality; however, this contradicts the hypothesis, and thus $K$ is algebraic over $\ins{F}_p$.

\ref{prop:charp-algtrasc:alg} $\Longrightarrow$ \ref{prop:charp-algtrasc:disc} Let $\lambda\in\pow{g}$, and let $I$ be an ideal of $K[X]$ that is coprime with $g$ (and thus with $\lambda$); let $f$ be a generator of $I$. We need to show that the open set $\lambda+I$ contains other elements of $\pow{g}$.

Let $F$ be the subfield of $K$ generated by $u$, the coefficients of $g$ and by the roots of $f$: then, $F$ is a finite field, say of cardinality $q$. For every $\alpha\in F$, $\lambda(\alpha)^{q-1}=1$; hence, the polynomial $h:=1-\lambda^{q-1}$ has zeros in every element of $F$, and in particular all the zeros of $f$ are zeros of $\lambda'$. Let $q'$ be a power of $q$ greater than every multiplicity of the roots of $f$: then, $f$ divides $h^{q'}=(1-\lambda^{q-1})^{q'}=1-\lambda^{q'(q-1)}$. Therefore,
\begin{equation*}
\lambda-\lambda^{q'(q-1)+1}=\lambda(1-\lambda^{q'(q-1)})\in I,
\end{equation*}
and thus $\lambda^{q'(q-1)+1}\in\lambda+I$, as claimed.
\end{proof}

\begin{cor}\label{cor:charp-alg}
Let $K_1,K_2$ be two field of positive characteristic. If $K_1$ is algebraic over its base field while $K_2$ is not then $G(K_1[X])\not\simeq G(K_2[X])$.
\end{cor}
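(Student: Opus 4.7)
The plan is to combine Proposition~\ref{prop:charp-algtrasc} with the topological invariance of the subspaces $G_{\{P\}}$. Concretely, the equivalence (i)$\Leftrightarrow$(iii) of that proposition says that, for any field $K$ of positive characteristic $p$ and any irreducible $g\in K[X]$, the subspace $\pow{g}=G_{\{(g)\}}(K[X])$ is non-discrete if and only if $K$ is algebraic over $\ins{F}_p$. Since discreteness is a purely topological property, it suffices to show that a hypothetical homeomorphism $h\colon G(K_1[X])\to G(K_2[X])$ would carry some $\pow{g}$ to some $\pow{g'}$.

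First I would fix an arbitrary irreducible polynomial $g\in K_1[X]$. Because $K_i[X]$ is a PID and so has trivial (in particular torsion) class group, the results recalled in Section~\ref{sect:prelim} (namely that $h$ preserves prime ideals and the sets $G_\Delta$) guarantee that $h$ sends the prime ideal $(g)$ onto a prime ideal $(g')$ of $K_2[X]$, and that $h(G_{\{(g)\}}(K_1[X]))=G_{\{(g')\}}(K_2[X])$. In the notation of the excerpt this means $h(\pow{g})=\pow{g'}$, and the restriction $h|_{\pow{g}}\colon\pow{g}\to\pow{g'}$ is a homeomorphism for the subspace topologies inherited from the respective Golomb spaces.

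Next, assume for contradiction that such an $h$ exists. By Proposition~\ref{prop:charp-algtrasc} applied to $K_1$ and $g$, the hypothesis that $K_1$ is algebraic over its base field forces $\pow{g}$ to be non-discrete. Transporting this along $h|_{\pow{g}}$, the subspace $\pow{g'}\subseteq G(K_2[X])$ is non-discrete as well. Applying Proposition~\ref{prop:charp-algtrasc} to $K_2$ and $g'$ (in the characteristic of $K_2$), we conclude that $K_2$ is algebraic over its base field, contradicting the hypothesis on $K_2$. Hence no such homeomorphism exists.

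There is no real obstacle: the corollary is just a clean packaging of Proposition~\ref{prop:charp-algtrasc}, the one topological ingredient being the standard fact that homeomorphisms of Golomb spaces of Dedekind domains with torsion class group permute the stratification $\{G_\Delta(R)\}_{\Delta\subseteq\Max(R)}$. Note also that the argument does not require $\car K_1=\car K_2$: each invocation of Proposition~\ref{prop:charp-algtrasc} is made in the characteristic appropriate to the field it concerns.
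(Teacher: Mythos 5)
Your argument is correct and is exactly the intended one: the paper states this corollary without proof as an immediate consequence of Proposition~\ref{prop:charp-algtrasc}, and your write-up just supplies the routine details (that a homeomorphism carries $\pow{g}=G_{\{(g)\}}(K_1[X])$ onto some $\pow{g'}=G_{\{(g')\}}(K_2[X])$ because $K_i[X]$ has torsion class group, and that non-discreteness of a subspace is a topological invariant). Nothing is missing.
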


Let $\omef(G(R))$ be the group of self-homeomorphisms of $G(R)$, and let
\begin{equation*}
\Lambda(R):=\{h\in\omef(G(R))\mid h(P^\nz)=P^\nz\text{~for every~}P\in\Spec(R)\}
\end{equation*}
and
\begin{equation*}
\Lambda_1(R):=\{h\in\Lambda(R)\mid h(1)=1\}.
\end{equation*}
Note that $\Lambda(R)$ does not necessarily contain all self-homeomorphisms of $G(R)$: for example, a ring automorphism $\psi$ of $R$ induces a self-homeomorphism of $\Lambda(R)$, but usually does not fix all prime ideals. (For an example, take $R=\insZ[i]$ and let $\psi$ be the complex conjugation.)

These groups are effectively invariants of the Golomb topology.
\begin{prop}
Let $R,S$ be two Dedekind domains, and suppose $G(R)$ and $G(S)$ are homeomorphic. Then, $\Lambda(R)\simeq\Lambda(S)$ and $\Lambda_1(R)\simeq\Lambda_1(S)$.
\end{prop}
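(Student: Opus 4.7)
The plan is to build the isomorphism by conjugation, after a harmless normalization that makes the given homeomorphism fix the point $1$; this handles $\Lambda$ and $\Lambda_1$ simultaneously.

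Let $h : G(R) \to G(S)$ be a homeomorphism. Since $h$ sends units to units, $v := h(1) \in U(S)$, and multiplication by $v^{-1}$ is a self-homeomorphism of $G(S)$; hence $\tilde h(x) := v^{-1} h(x)$ is again a homeomorphism $G(R) \to G(S)$ with $\tilde h(1) = 1$. Define
\begin{equation*}
\Phi : \omef(G(R)) \longrightarrow \omef(G(S)), \qquad \Phi(\phi) := \tilde h \circ \phi \circ \tilde h^{-1}.
\end{equation*}
Its inverse is $\psi \mapsto \tilde h^{-1} \circ \psi \circ \tilde h$, so $\Phi$ is a group isomorphism.

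I then verify that $\Phi$ restricts to bijections $\Lambda(R) \to \Lambda(S)$ and $\Lambda_1(R) \to \Lambda_1(S)$. The crucial input, recalled in Section~\ref{sect:prelim}, is that a Golomb homeomorphism between Dedekind domains (with torsion class group, as implicit throughout this paper) sends every $P^\nz$ to some $Q^\nz$; in particular $\tilde h$ induces a bijection $\tau : \Spec(R) \to \Spec(S)$ with $\tilde h(P^\nz) = \tau(P)^\nz$. Given $\phi \in \Lambda(R)$ and $Q \in \Spec(S)$, setting $P := \tau^{-1}(Q)$ yields
\begin{equation*}
\Phi(\phi)(Q^\nz) = \tilde h\bigl(\phi(\tilde h^{-1}(Q^\nz))\bigr) = \tilde h(\phi(P^\nz)) = \tilde h(P^\nz) = Q^\nz,
\end{equation*}
so $\Phi(\phi) \in \Lambda(S)$; the reverse inclusion follows by the same argument applied to $\Phi^{-1}$. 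If additionally $\phi \in \Lambda_1(R)$, then, since $\tilde h(1)=1$ forces $\tilde h^{-1}(1)=1$, one gets $\Phi(\phi)(1) = \tilde h(\phi(1)) = \tilde h(1) = 1$, so $\Phi(\phi) \in \Lambda_1(S)$.

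The only delicate ingredient is the spectrum-to-spectrum bijection induced by $\tilde h$, which is what makes $\Lambda(R)$ a topological invariant in the first place; everything else is formal bookkeeping about conjugation. The normalization from $h$ to $\tilde h$ is precisely what is needed so that the same conjugation also preserves the basepoint condition defining $\Lambda_1$; without it one would still obtain $\Lambda(R) \simeq \Lambda(S)$, but the isomorphism would move $1$ to $h(1)$ and would not obviously identify $\Lambda_1(R)$ with $\Lambda_1(S)$.
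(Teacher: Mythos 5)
Your proof is correct and follows essentially the same route as the paper: conjugation by the given homeomorphism (normalized so that it fixes $1$), using the fact that a Golomb homeomorphism carries each $P^\nz$ to some $Q^\nz$. The only cosmetic difference is that you normalize once and use the same conjugation for both $\Lambda$ and $\Lambda_1$, whereas the paper conjugates by $h$ for $\Lambda$ and by the normalized $h'$ for $\Lambda_1$; your explicit remark that the prime-to-prime property rests on the torsion-class-group hypothesis is a point the paper's statement leaves implicit.
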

\begin{proof}
Let $h:G(R)\longrightarrow G(S)$ be a homeomorphism. For every $\psi\in\Lambda(R)$, the map $\overline{\psi}:=h\circ\psi\circ h^{-1}$ is a self-homeomorphism of $G(S)$, and if $P$ is a prime ideal of $R$ then $\overline{\psi}(P^\nz)=h(\psi(h^{-1}(P^\nz)))=h(h^{-1}(P^\nz))=P^\nz$; thus, $\overline{\psi}\in\Lambda(S)$. Hence, $h$ induces a map $\Lambda(R)\longrightarrow\Lambda(S)$, sending $\psi$ to $\overline{\psi}$, which is easily seen to be a group homomorphism. Likewise, $h^{-1}$ induces a map $\Lambda(S)\longrightarrow\Lambda(R)$ which is the inverse of the previous one. Hence, $\Lambda(R)\simeq\Lambda(S)$.

The reasoning for $\Lambda_1$ is the same, using the homeomorphism $h':G(R)\longrightarrow G(S)$ sending $x$ to $h(1)^{-1}h(x)$ (so that $h'(1)=1$).
\end{proof}

For any unit $u$ of $R$, let $\psi_u$ be the multiplication by $u$, and let $H:=\{\psi_u\mid u\in U(R)\}$. Then, $H$ is a subgroup of $\Lambda(R)$ (and thus of $\omef(R)$) that is isomorphic to the group of units of $R$. For every $h\in\omef(G(R))$, the map $h_1:=\psi_{h(1)^{-1}}\circ h$ is a self-homeomorphism of $G(R)$ fixing $1$; furthermore, if $h\in\Lambda(R)$ then so does $h_1$, and thus $h_1\in\Lambda_1(R)$. It follows that $\Lambda(R)$ is generated by $H$ and $\Lambda_1(R)$, and in particular if $\Lambda_1(R)$ is trivial then $\Lambda(R)=H\simeq U(R)$.

For example, if $R=\insZ$ then by \cite[Theorem 6.7]{golomb-almcyc} $\Lambda_1(\insZ)$ is trivial and thus $\Lambda(\insZ)$ is isomorphic to $U(\insZ)\simeq\insZ_2$. This phenomenon is linked to the hypothesis we worked with in Section \ref{sect:G1}.
\begin{prop}\label{prop:Homeofix}
Let $R$ be a Dedekind domain with torsion class group that has the almost Dirichlet property and is power separated. Suppose that there are infinitely many prime ideals $P$ such that $U(R)\longrightarrow R/P^{n_P}$ is injective for some integer $n_P$. Then, $\Lambda_1(R)$ is trivial and $\Lambda(R)\simeq U(R)$.
\end{prop}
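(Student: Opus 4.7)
The plan is to take $h \in \Lambda_1(R)$ and show $h = \mathrm{id}$. By Theorem \ref{teor:almprime} (applicable since $R$ is power separated and has the almost Dirichlet property), $h$ sends each almost prime to an almost prime; since $h$ preserves every prime ideal setwise, an almost prime $a$ generating $P^n$ maps to another generator of $P^n$, so $h(a) = e(a)\cdot a$ for a unique unit $e(a) \in U(R)$, defining a function $e$ on the almost primes of $R$. I would then reduce to showing $e \equiv 1$: because almost primes are dense in $G(R)$ (by almost Dirichlet) and $G(R)$ is Hausdorff, if $h$ fixes every almost prime then the coincidence set $\{x \in R^\nz : h(x) = x\}$ is closed and dense, hence equals $R^\nz$. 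The isomorphism $\Lambda(R) \simeq U(R)$ will then follow from the decomposition $\Lambda(R) = H \cdot \Lambda_1(R)$ noted before the statement.

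The key technical claim is that $e$ is locally constant in each $P$-adic topology, for every \emph{good} prime $P$ (meaning one with $U(R) \hookrightarrow R/P^{n_P}$). Since $h$ restricts to a $P$-adically continuous self-homeomorphism of $R \setminus P$, for any almost prime $a \notin P$ and any $M$ one can find $N$ with $h(a + P^N) \subseteq e(a)a + P^M$. Choosing $M = n_P$ and $N \geq n_P$, every almost prime $b \in a + P^N$ will satisfy $e(b)\,b \equiv e(a)\,a \pmod{P^{n_P}}$; combining this congruence with $b \equiv a \pmod{P^{n_P}}$ and the fact that $a$ is a $P$-adic unit yields $e(b) \equiv e(a) \pmod{P^{n_P}}$, and the injectivity hypothesis then forces $e(b) = e(a)$. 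An identical argument applied at the fixed point $1$, using $h(1) = 1$, will show that for each good $P$ there is an $N$ such that $e(b) = 1$ for every almost prime $b \in 1 + P^N$.

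To conclude, I would fix an almost prime $a \in P_a$ and pick two good primes $P, P'$ with $P$, $P'$, $P_a$ pairwise distinct, possible by the infinitude hypothesis. The local constancy established above produces integers $N, N'$ such that $e$ takes the value $e(a)$ on every almost prime of $a + P^N$ and the value $1$ on every almost prime of $1 + P'^{N'}$. By the Chinese remainder theorem the intersection $(a + P^N) \cap (1 + P'^{N'})$ equals a single coset $b_0 + P^N P'^{N'}$, and this coset is coprime (since $b_0 \equiv a \not\equiv 0 \pmod{P}$ and $b_0 \equiv 1 \not\equiv 0 \pmod{P'}$), hence open in $G(R)$; the almost Dirichlet property then provides an almost prime $b$ inside it, and combining $e(b) = e(a)$ with $e(b) = 1$ yields $e(a) = 1$. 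The hard part will be the first step above: $P$-adic continuity only yields congruences modulo arbitrarily high powers of $P$, and it is precisely the injectivity of $U(R) \to R/P^{n_P}$ that lets one promote such congruences to equalities of units; once that is in place, the Chinese remainder argument bridging a generic almost prime to a neighborhood of $1$ is routine.
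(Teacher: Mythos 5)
Your proof is correct, and its core mechanism is the same as the paper's: almost primes are sent to unit multiples of themselves (Theorem \ref{teor:almprime} together with the fact that $h$ fixes each prime ideal setwise), and $P$-adic continuity of $h$ at the fixed point $1$, combined with the injectivity of $U(R)\longrightarrow R/P^{n_P}$, forces that unit to equal $1$ for almost primes sufficiently close to $1$. Where you diverge is in the globalization. The paper stops after the computation at $1$: it observes that the Golomb closure of $1+P^m$ is $(1+P^m)\cup P^\nz$, so the (closed) fixed-point set of $h$ contains $P^\nz$ for every good prime $P$, and then shows that $\bigcup_P P^\nz$ is dense because there are infinitely many good primes. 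You instead prove that the unit function $e$ is locally constant in each good $P$-adic topology at every almost prime, and use the Chinese remainder theorem plus the almost Dirichlet property to connect an arbitrary almost prime to the neighborhood of $1$ on which $e\equiv 1$. Both arguments are sound; the paper's route is shorter because it exploits the explicit description of Golomb closures of coprime cosets, while yours avoids that description entirely at the cost of the extra local-constancy step and the CRT bridge. One small point worth making explicit in your write-up: the closedness of the coincidence set requires $G(R)$ to be Hausdorff, which does hold here because the hypothesis forces $R$ to have infinitely many maximal ideals and hence zero Jacobson radical.
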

\begin{proof}
Let $\Delta$ be the set of all prime ideals for which there is such a $n_P$, and let $X:=\bigcup\{P^\nz\mid P\in\Delta\}$.

By Theorem \ref{teor:almprime}, any self-homeomorphism $h$ of $G(R)$ sends almost prime elements into almost prime elements. Let $h\in\Lambda_1(R)$, and let $f$ be almost prime: then, $h(f)$ is an almost prime element contained in the same prime ideal of $f$, and thus there is a $u_f\in U(R)$ such that $h(f)=u_ff$.

Let $P\in\Delta$. Then, $h$ is a homeomorphism in the $P$-topology, and thus in particular it is continuous, i.e., for every $n$ there is an $m=m(n)\geq n$ such that $h(1+P^m)\subseteq 1+P^n$ (using $h(1)=1$). Choose $n\geq n_P$: then, for every $f\in 1+P^m$ that is almost prime both $f$ and $u_ff$ are in $1+P^n$, and thus $f-u_ff=f(1-u_f)\in P^n$. Since $f\notin P$, it follows that $1-u_f\in P^n$. By the injectivity of $U(R)\longrightarrow R/P^n$ we have $u_f=1$, i.e., $f$ is a fixed point of $h$. The closure of $1+P^m$ is the Golomb topology is $(1+P^m)\cup P^\nz$; hence, also all the elements of $P^\nz$ are fixed points of $h$. It follows that $h|_X$ is the identity.

Let now $z\in G(R)$ and let $z+I$ be an open neighborhood of $z$. Since $\Delta$ is infinite, there is a $Q\in\Delta$ that is coprime with $I$ and $z$; thus, $z+I$ meets $Q$. Since $I$ was arbitrary, it follows that $z$ is in the closure of $X$; thus, $X$ is dense in $G(R)$. Since $h|_X$ is the identity, the whole $h$ is the identity. Hence, $\Lambda_1(R)$ is trivial and $\Lambda(R)\simeq U(R)$.
\end{proof}

\begin{teor}\label{teor:algextFp}
Let $K,K'$ be fields of characteristic $p>0$. If $K$ is algebraic over $\ins{F}_p$ and $G(K[X])\simeq G(K'[X])$ then $K\simeq K'$.
\end{teor}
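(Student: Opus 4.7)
The plan is to combine the invariants developed in the previous sections and reduce the isomorphism problem to a group-theoretic statement about $K^\nz$ and $K'^\nz$. First I would note that since $\car K' = p > 0$ and $K$ is algebraic over $\ins{F}_p$, Corollary \ref{cor:charp-alg} (via the equivalence between \ref{prop:charp-algtrasc:disc} and \ref{prop:charp-algtrasc:alg} in Proposition \ref{prop:charp-algtrasc}) forces $K'$ to be algebraic over $\ins{F}_p$ as well. I would then distinguish whether $K$ is algebraically closed. In the easy case $K = \ins{F}_{p^\infty}$, $K$ is separably closed, so by Proposition \ref{prop:Gn-pinfty}\ref{prop:Gn-pinfty:dense}, $G_1(K[X])$ is not dense in $G(K[X])$; because $G_1$ and the closure operator are Golomb invariants, $G_1(K'[X])$ also fails to be dense. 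But by Proposition \ref{prop:dirichlet-aeff}, this is incompatible with $K'$ being algebraic over $\ins{F}_p$ and not algebraically closed, so $K' \simeq \ins{F}_{p^\infty} \simeq K$.

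In the remaining case $K \neq \ins{F}_{p^\infty}$, the same density comparison run in reverse excludes $K' = \ins{F}_{p^\infty}$. Both $K[X]$ and $K'[X]$ then satisfy the hypotheses of Proposition \ref{prop:Homeofix}: they are almost Dirichlet by Proposition \ref{prop:dirichlet-aeff}, power separated by the corollary to the degree-function criterion (with $d(f) := 2^{\deg f}$), and the unit groups $K^\nz$ and $K'^\nz$ inject into every quotient $R/(f)^n$ for $n \geq 1$, since a nonzero constant cannot lie in the positive-degree ideal $(f)^n$. Applying that proposition gives $\Lambda(K[X]) \simeq K^\nz$ and $\Lambda(K'[X]) \simeq K'^\nz$, and since $\Lambda(\cdot)$ is an invariant of the Golomb topology, the homeomorphism transports into an abstract group isomorphism $K^\nz \simeq K'^\nz$.

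Finally I would recover the field from its multiplicative group, using that the characteristic $p$ is known. For each $n \geq 1$, the subfield $\ins{F}_{p^n}$ sits inside $K$ if and only if $K^\nz$ contains a cyclic subgroup of order $p^n - 1$, a purely group-theoretic condition (one direction is immediate; for the converse one uses that the multiplicative order of $p$ modulo $p^n-1$ is exactly $n$). Algebraic extensions of $\ins{F}_p$ are determined up to isomorphism by the divisor-closed set $\{n \geq 1 : \ins{F}_{p^n} \subseteq K\}$, equivalently by their Steinitz numbers, so $K^\nz \simeq K'^\nz$ gives $K \simeq K'$. The hardest part will be verifying all hypotheses of Proposition \ref{prop:Homeofix} simultaneously on both sides in the second case; the exclusion of $K' = \ins{F}_{p^\infty}$ obtained from the density transfer is the decisive step that allows the proposition to be applied uniformly to $K[X]$ and $K'[X]$.
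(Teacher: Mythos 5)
Your proposal is correct and follows essentially the same route as the paper: Corollary \ref{cor:charp-alg} to force $K'$ algebraic over $\ins{F}_p$, the density of $G_1$ (via Propositions \ref{prop:Gn-pinfty} and \ref{prop:dirichlet-aeff}) to settle the algebraically closed case and exclude it otherwise, Proposition \ref{prop:Homeofix} to extract $K^\nz\simeq K'^\nz$, and the orders $p^n-1$ to recover the field from its unit group. Your verification of the hypotheses of Proposition \ref{prop:Homeofix} (power separation via $d(f)=2^{\deg f}$ and injectivity of $K^\nz\to K[X]/(f)^n$) and the Steinitz-number phrasing of the last step are just slightly more explicit versions of what the paper does.
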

\begin{proof}
By Corollary \ref{cor:charp-alg}, $K'$ must be algebraic over $\ins{F}_p$. If $K$ is algebraically closed, then $G_1(K[X])$ is not dense in $G(K[X])$ (Proposition \ref{prop:Gn-pinfty}\ref{prop:Gn-pinfty:dense}); if $K'$ is not algebraically closed, then $K'[X]$ is almost Dirichlet (Proposition \ref{prop:dirichlet-aeff}) and thus $G_1(K[X])$ is dense in $G(K[X])$. Therefore, if $K$ is algebraically closed then so is $K'$, and thus $K\simeq K'$.

Suppose now that $K$ is not algebraically closed. By the previous reasoning, neither $K'$ is algebraically closed. By Proposition \ref{prop:dirichlet-aeff}, $K[X]$ and $K'[X]$ are almost Dirichlet, and thus by Proposition \ref{prop:Homeofix} $\Lambda_1(K[X])\simeq U(K[X])=K^\nz$ and $\Lambda_1(K'[X])\simeq U(K'[X])=K'^\nz$. Furthermore, all maps $K^\nz\longrightarrow K[X]/P$ are injective; by Proposition \ref{prop:Homeofix}, it follows that $K^\nz\simeq K'^\nz$.

We can consider $K$ and $K'$ contained in the algebraic closure $\ins{F}_{p^\infty}$. If $K'$ is not isomorphic to $K$, then $K\neq K'$, and thus without loss of generality there is a finite extension $\ins{F}_{p^n}$ that is contained in $K$ but not in $K'$. Hence, $K^\nz$ contains elements of order $p^n-1$ (the generator of the multiplicative group of $\ins{F}_{p^n}$) while $K'$ does not, because $p^m-1$ is a multiple of $p^n-1$ only if $m$ is a multiple of $n$. Therefore, $K^\nz\simeq K'^\nz$ implies $K=K'$, as claimed.
\end{proof}

As a corollary, we are able to answer affirmatively to a question posed in \cite[Section 3.1]{clark-golomb}. We denote by $\mathfrak{c}$ the cardinality of the continuum.
\begin{cor}
The number of distinct Golomb topologies associated to countably infinite domains is $\mathfrak{c}$.
\end{cor}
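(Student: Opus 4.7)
The plan is to prove matching upper and lower bounds of $\mathfrak{c}$ on the cardinality in question, using Theorem \ref{teor:algextFp} for the lower bound.

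For the upper bound, I would observe that a countably infinite domain is determined, up to isomorphism, by its two binary operations on an $\aleph_0$-sized underlying set; there are only $\aleph_0^{\aleph_0}=\mathfrak{c}$ such pairs of operations, so there are at most $\mathfrak{c}$ isomorphism classes of countably infinite domains and hence at most $\mathfrak{c}$ distinct Golomb topologies among them.

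For the lower bound, I would exhibit a family of $\mathfrak{c}$ pairwise non-homeomorphic Golomb spaces coming from countable domains. The natural candidates are the polynomial rings $K[X]$ as $K$ ranges over the algebraic extensions of $\ins{F}_p$ inside $\ins{F}_{p^\infty}$ (for a fixed prime $p$). Each such $K$ is countable, hence so is $K[X]$. The subfields of $\ins{F}_{p^\infty}$ are classified by Steinitz (supernatural) numbers, of which there are exactly $\mathfrak{c}$; in particular there are $\mathfrak{c}$ pairwise non-isomorphic such fields $K$. By Theorem \ref{teor:algextFp}, if $K,K'\subseteq\ins{F}_{p^\infty}$ are non-isomorphic then $G(K[X])$ and $G(K'[X])$ are non-homeomorphic, which produces $\mathfrak{c}$ distinct Golomb topologies. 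Combining the two bounds yields the claim.

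The only nontrivial step is the lower bound, and that is handled by the already-proved Theorem \ref{teor:algextFp}; the remaining ingredients (counting isomorphism classes of countable structures, and the cardinality of the lattice of subfields of $\ins{F}_{p^\infty}$) are standard, so no serious obstacle is expected.
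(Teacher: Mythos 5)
Your proposal is correct and follows essentially the same route as the paper: the same counting argument for the upper bound, and for the lower bound the same family $\{K[X]\}$ with $K$ ranging over algebraic extensions of a fixed $\ins{F}_p$, distinguished via Theorem \ref{teor:algextFp}. The only cosmetic difference is that you invoke the Steinitz classification of subfields of $\ins{F}_{p^\infty}$ to get $\mathfrak{c}$ pairwise non-isomorphic fields, whereas the paper exhibits them explicitly as composita indexed by subsets of the primes.
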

\begin{proof}
There are at most $\mathfrak{c}$ possible binary operations on a countably infinite set, and thus there are at most $\mathfrak{c}$ distinct Golomb topologies.

To show that there are exactly $\mathfrak{c}$, let $p$ be a prime number and let $\mathcal{C}_p$ be the set of all (isomorphism classes of) algebraic extensions of $\ins{F}_p$. By Theorem \ref{teor:algextFp}, the Golomb topologies relative to the members of $\mathcal{C}_p$ are pairwise non-homeomorphic, and thus we need to show that $\mathcal{C}_p$ has cardinality at least $\mathfrak{c}$.

Let $\{q_1,q_2,\ldots\}$ be the set of prime numbers. To each $A\subseteq\insN$, we can associate the field $F(A)$ defined as the composition of the extensions of $\ins{F}_p$ of degree $q_i$, for $i\in A$: then, $F(A)\neq F(A')$ if $A\neq A'$, and thus the cardinality of $\mathcal{C}_p$ is at least the cardinality of the power set of $\insN$, i.e., $\mathfrak{c}$. The claim is proved.
\end{proof}

The method used in the proof of Theorem \ref{teor:algextFp} does not quite extend to the case in which the characteristic of $K$ and $K'$ are not supposed beforehand to be equal; that is, it is not clear how to prove the analogous of Theorem \ref{teor:algchiusi-char} for algebraic extensions of finite fields. We can however say something about the relation between the two characteristics.
\begin{prop}
Let $K,K'$ be algebraic extensions of $\ins{F}_p$ and $\ins{F}_{p'}$, respectively. If $p$ divides $p'-1$, then $G(K[X])$ and $G(K'[X])$ are not homeomorphic.
\end{prop}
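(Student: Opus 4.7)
The plan is to split the argument into three cases according to whether $K$ and $K'$ are algebraically closed, after first noting that the hypothesis $p\mid p'-1$ forces $p\neq p'$ (otherwise $p\mid p-1$, impossible for a prime).

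If both $K$ and $K'$ are algebraically closed, then $K\simeq\ins{F}_{p^\infty}$ and $K'\simeq\ins{F}_{p'^\infty}$, and since $p\neq p'$ Theorem \ref{teor:algchiusi-char} directly yields $G(K[X])\not\simeq G(K'[X])$. If exactly one of them (say $K$) is algebraically closed, I would invoke Proposition \ref{prop:Gn-pinfty}\ref{prop:Gn-pinfty:dense} to see that $G_1(K[X])$ is not dense in $G(K[X])$, while Proposition \ref{prop:dirichlet-aeff} applied to the non-algebraically-closed $K'$ makes $K'[X]$ almost Dirichlet and hence $G_1(K'[X])$ dense in $G(K'[X])$; since $G_1$ is preserved by homeomorphisms of Golomb spaces, these two spaces cannot be homeomorphic.

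The main case is when neither $K$ nor $K'$ is algebraically closed. Then both $K[X]$ and $K'[X]$ are almost Dirichlet (Proposition \ref{prop:dirichlet-aeff}), are power separated (corollary after the definition), and satisfy the injectivity hypothesis of Proposition \ref{prop:Homeofix}: any maximal ideal $P$ of $K[X]$ satisfies $P\cap K=0$, so the canonical inclusion $K\hookrightarrow K[X]/P$ makes the map $U(K[X])=K^\nz\to K[X]/P$ injective, and there are infinitely many such $P$. Proposition \ref{prop:Homeofix} then gives $\Lambda(K[X])\simeq K^\nz$ and, symmetrically, $\Lambda(K'[X])\simeq K'^\nz$. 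Since $\Lambda$ is a topological invariant, a homeomorphism $G(K[X])\simeq G(K'[X])$ would force $K^\nz\simeq K'^\nz$ as abstract groups.

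The final step, where the hypothesis $p\mid p'-1$ enters, is to observe that $K^\nz\subseteq\ins{F}_{p^\infty}^\nz$ has no element of order $p$ (every element has order dividing some $p^n-1$, which is coprime to $p$), whereas $K'^\nz\supseteq\ins{F}_{p'}^\nz$ is cyclic of order divisible by $p$ and therefore contains an element of order $p$; this contradicts $K^\nz\simeq K'^\nz$. I do not expect a serious obstacle here: the work is entirely done by Proposition \ref{prop:Homeofix} and Theorem \ref{teor:algchiusi-char}, and the only point that deserves care is verifying that the hypotheses of Proposition \ref{prop:Homeofix} apply in the non-algebraically-closed case and that the exceptional algebraically closed cases are swept up by the density-of-$G_1$ dichotomy exactly as in the proof of Theorem \ref{teor:algextFp}.
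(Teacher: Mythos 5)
Your proof is correct and follows essentially the same route as the paper: reduce to the case where neither field is algebraically closed, apply Propositions \ref{prop:dirichlet-aeff} and \ref{prop:Homeofix} to conclude $K^\nz\simeq K'^\nz$, and derive a contradiction from the element of order $p$ in $\ins{F}_{p'}^\nz$. Your explicit treatment of the mixed case (exactly one field algebraically closed) via the density of $G_1$ is in fact slightly more careful than the paper, which disposes of the algebraically closed cases by citing Theorem \ref{teor:algchiusi-char} alone.
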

\begin{proof}
Using Theorem \ref{teor:algchiusi-char} we can suppose that $K$ and $K'$ are not algebraically closed. As in the proof of Theorem \ref{teor:algextFp}, by Propositions \ref{prop:dirichlet-aeff} and \ref{prop:Homeofix} if $G(K[X])\simeq G(K'[X])$ then the groups of units $K^\nz$ and $K'^\nz$ are isomorphic. However, $p|p'-1$ implies that there is an $u\in K'^\nz$ of order $p$, something which cannot happen in $K^\nz$. Hence, $G(K[X])$ and $G(K'[X])$ are not homeomorphic.
\end{proof}

\bibliographystyle{plain}
\bibliography{/bib/articoli,/bib/libri,/bib/miei}
\end{document}